\tikzset{>=stealth}
\theoremstyle{plain}
\newtheorem{lemma}{Lemma}
\newtheorem{theorem}{Theorem}
\newtheorem{proposition}{Proposition}
\newtheorem{corollary}{Corollary}
\theoremstyle{definition}
\newtheorem{definition}{Definition}
\newtheorem{example}{Example}
\newtheorem{problem}{Problem}
\newtheorem{assumption}{Assumption}
\theoremstyle{remark}
\newtheorem{remark}{Remark}
\let\leq\leqslant
\let\geq\geqslant
\let\mc\mathcal
\DeclareMathOperator{\wt}{weight}
\title{Maintenance scheduling in a railway corridor}
\author[Saman Eskandarzadeh]{Saman Eskandarzadeh$^{1,2}$}
\address{$^1$School of Mathematical \& Physical Sciences, University of Newcastle, Callaghan, NSW
  2308, Australia}
\address{$^2$ Institute of Transport and Logistics Studies, University of Sydney, NSW 2006, Australia}
\author[Thomas Kalinowski]{Thomas Kalinowski$^{1,3}$}
\address{$^3$School of Science and Technology, University of New England, Armidale, NSW 2351, Australia}
\author[Hamish Waterer]{Hamish Waterer$^1$}
\email{saman.eskandarzadeh@sydney.edu.au}
\email{tkalinow@une.edu.au}
\email{hamish.waterer@newcastle.edu.au}
\thanks{This research is supported by the Australian Research Council and Aurizon
Network Pty Ltd under the grant LP140101000.}
\date{\today}
\keywords{maintenance scheduling, integer programming, dynamic programming, set covering}
\subjclass[2010]{90C10, 90C27}
\begin{document}

\begin{abstract}
  We investigate a novel scheduling problem which is motivated by an application in the Australian
  railway industry. Given a set of maintenance jobs and a set of train paths over a railway corridor with
  bidirectional traffic, we seek a schedule of jobs such that a minimum number of train paths are
  cancelled due to conflict with the job schedule. We show that the problem is NP-complete in
  general. In a special case of the problem when every job under any schedule just affects one train
  path, and the speed of trains is bounded from above and below, we show that the problem can be
  solved in polynomial time. Moreover, in another special case of the problem where the traffic is
  unidirectional, we show that the problem can be solved in time $O(n^4)$.
\end{abstract}

\maketitle

\section{Introduction}\label{sec:intro}
Australia has a large operational heavy railway network which comprises approximately 33,355
route-kilometres. This network accounted for approximately 55 percent of all freight transport
activity in Australia in the financial year 2013-14, almost 367 billion tonne-kilometres which was
up 50 percent from 2011-12 (\cite{BITRE2016}). To prevent long unplanned interruptions in the
service to customers, a proper maintenance and renewal program for the network infrastructures is
required. The objective is to schedule planned maintenance and asset renewal jobs in such a way that
their impact on the capacity that will be provided to customers is minimised while at the same time
keeping the infrastructure in good working condition. An effective planned maintenance and renewal
schedule reduces the frequency with which disruptive reactive maintenance is needed.

We investigate a planned maintenance and asset renewal scheduling problem on a railway corridor with
train traffic in both directions. Potential train journeys are represented by train paths, where a
train path is specified by a sequence of (location,time)-pairs, and we distinguish between up- and
down-paths, depending on the direction of travel. Necessary maintenance and renewal activities, or
work, are specified by a release time, a deadline, a processing time and a location. Scheduling work
at a particular time has the consequence that the train paths passing through the corresponding
location while the work is carried out have to be cancelled. An instance of the problem is given by
a set of train paths and a set of work activities, and the task is to schedule all the work such
that the total number of cancelled paths is minimised.

There is a vast literature on scheduling problems and transportation networks. However, the
interactions of scheduling problems and transportation networks in contexts such as the railway
industry has not been studied thoroughly. \citet{BolandKWZ2014} study the problem of scheduling
maintenance jobs in a network. Each maintenance job causes a loss in the capacity of the network
while it is being performed. The objective is to minimise this loss, or equivalently, maximize the
capacity over time horizon, while ensuring that all jobs are scheduled. They model the problem as a
network flow problem over time. This problem and its variants are investigated in
\cite{BolandKWZ2013,BolandKKK2014,BolandKK2015,BolandKK2016,AbedCDGMMRR2017}. Our work is different
from these previous works in that we model the capacity by discrete train paths whereas in the
network flow models capacity is approximated by continuous flows over time.

The second stream of related research studies the problem of scheduling jobs on a single machine or
multiple parallel machines with the goal of minimising the total busy time of machines (see, for
example, \cite{ChangGK2014,KhandekarSST2015,KoehlerK2017,ChangKM2017}). This problem is closely
related to a special case of our problem in which there is unidirectional traffic.

The third related stream is the body of research which explores variants of the hitting set problem
(see, for example, \cite{HassinM1991,MustafaR2010,FeketeHMPP2018}). Some of the results in this
paper are due to the close connections between the maintenance scheduling problem, machine
scheduling and the hitting set problem. As we show in Section~\ref{sec:MIP}, the maintenance
scheduling problem can be formulated as a set covering problem.

In Section~\ref{sec:problem}, we formally introduce the maintenance scheduling problem, and we prove
that it is NP-complete by a reduction from a variant of the hitting set
problem. Section~\ref{sec:exact_algorithm} contains a dynamic programming algorithm for solving the
maintenance scheduling problem. In Section~\ref{sec:MIP}, we present two integer programming (IP) formulations and
compare their LP-relaxations under some additional assumptions. In Section~\ref{sec:unidirectional},
we investigate a special case of the problem with unidirectional traffic and prove that the problem
can be solved in polynomial time by dynamic programming. Making an additional
assumption on the set of jobs, we improve the runtime bound from $O(n^4)$ to $O(n^3)$, where $n$ is the number of
jobs. In Section~\ref{sec:bounded_intersections}, we investigate another special case of the problem with
two main assumptions: 1) the minimum speed of trains, and 2) the length of the corridor is
bounded. We show that this special case can be solved in quadratic time by formulating it as a
shortest path problem. In Section~\ref{sec:computation}, we compare the computational performance of
the IP formulations.

\section{Problem description}\label{sec:problem}
We are given a set $\mc{J}$ of $n$ maintenance jobs, and a set $\mc{P}$ of train paths. Each job
$j \in \mc{J}$ is specified by its earliest start time $r_j$, its latest finish time $d_j$, its
processing time $p_j$, its start location $l^s_j$ and its end location $l^e_j$, all of which are
non-negative rational numbers. A train path represents the movement of a train through the railway
corridor whose length we denote by $L$, so that the corridor can be represented by the interval
$[0,L]$. In this work we make the assumption that trains move from end to end with constant velocity
and that the train paths in either direction are regularly distributed over time. More precisely,
the set $\mc P$ of train paths comes with a partition into the set
$ \mc{P}^{u}=\{\ell^u_1, \dots, \ell^u_{m}\} $ of \emph{up-paths} and the set
$ \mc{P}^{d}=\{\ell^d_1, \dots, \ell^d_{m}\} $ of \emph{down-paths}, where path $\ell_i^u$ starts at
location $0$ at time $i\Delta$ and arrives at location $L$ at time $i\Delta+\delta$, while path
$\ell_i^d$ starts at location $L$ at time $i\Delta$ and arrives at location $0$ at time
$i\Delta+\delta$.  Here $\Delta$ and $\delta$ are positive rational numbers representing the headway and
the total travel time, respectively. Every job $j$ occupies or possesses the corridor at a location
which starts at $ \ell^s_j $ and ends at $ \ell^e_j $ for duration $p_j$. If a location is
possessed by a job in a certain time interval, then any path that passes through this location during that
time interval has to be cancelled. Note that different jobs can possess the same location at the
same time. The decision in the maintenance scheduling problem is to find start times of jobs in such
a way that the number of cancelled paths is minimised. A solution is given by a vector
$s=(s_j)_{j \in \mc{J}}$ of start times.

An instance of the problem can be represented geometrically in the plane, where the horizontal and
vertical axes represent time and location, respectively. Let $[(x_1,y_1),(x_2,y_2)]$ denote a line
segment connecting two points $(x_1,y_1)\in \mathbb{R}^2 $ and $(x_2,y_2) \in \mathbb{R}^2$. Train
paths can be identified with line segments: $\ell^u_i=[(i\Delta,0),(i\Delta+\delta,L)]$ and
$\ell^d_i=[(i\Delta,L),(i\Delta+\delta,0)]$. Jobs correspond to rectangular boxes as illustrated in
Figure~\ref{Figure:instance1}.
If job $j \in \mc{J}$ starts at time $s_j \in [r_j,d_j-p_j]$ then there is a set
$R \subseteq \mc{P}$ of paths that have to be cancelled as they would pass through the location
during the interval $[s_j, s_j+p_j]$. We call the path set $R$ a \emph{possession}. Let $\mc{R}_j$
denote the collection of all such possessions for job $j$. The problem of finding an optimal start
time vector $s$ is equivalent to selecting a possession $R_j\in\mc{R}_j$ for each job $j$ such that
the cardinality of their union is minimised.
\begin{figure}[htbp]
\centering   	
\begin{tikzpicture}[yscale=0.3,xscale=.4]
	\draw[->, very thick]  (-1,0) -- (32,0) node[anchor=north] {Time};
	\draw[->, very thick]  (0,-1) -- (0,22) node[anchor=east] {Location};
	\foreach \x in {1,...,10}
		{
	  		\draw (\x*2,0) -- +(10,20);%
	  		\draw (\x*2,20) -- +(10,-20); %
	  			\draw (\x*2+10,20) node[anchor=south] {$\ell_{\x}^u$}; %
	  		\draw (\x*2+9,-0.4) node[anchor=south] {$\ell_{\x}^d$};
	  	}
	\draw[dotted]  (4,4) rectangle +(10,1);
	\filldraw[fill=orange!90]  (8.5,4) +(-1,0) rectangle +(4,1);
	\draw[dotted]  (10,8) rectangle +(15,2);
	\filldraw[fill=orange!90]  (17,8) +(-1,0) rectangle +(2,2);
	\draw[dotted]  (8,12) rectangle +(8,3);
	\filldraw[fill=orange!90]  (11.5,12) +(-1,0) rectangle +(0.1,3);
	\draw[dotted]  (11,14) rectangle +(13,2);
	\filldraw[fill=orange!90]  (17.2,14) +(-1,0) rectangle +(1,2);	
	\draw (0,20) node[anchor=east] {$L$};
	\draw[dashed] (14,6) -- (14,0) node[anchor=north] {$d_j$};
	\draw[dashed] (4,6) -- (4,0) node[anchor=north] {$r_j$};
	\draw[<->] (7.5,6) -- +(2.5,0) node[anchor=south] {$p_j$} --(12.5,6);
	\draw[dashed] (4,4) -- (0,4) node[anchor=east] {$l^s_j$};
	\draw[dashed] (4,5) -- (-1.5,5) node[anchor=east] {$l^e_j$}; 
\end{tikzpicture}
\caption{An instance of the maintenance scheduling problem with four jobs. The dotted boxes
  represent the jobs, and the filled boxes indicate a feasible solution. The set of cancelled paths
  is
  $\{\ell^d_1,\,\ell^d_2,\,\ell^d_4,\,\ell^d_6,\,\ell^d_7,\,\ell^d_8,\,\ell^u_2,\,\ell^u_3,\,\ell^u_4,\,\ell^u_5,\,\ell^u_6,\,\ell^u_7\}$.}\label{Figure:instance1}
\end{figure}
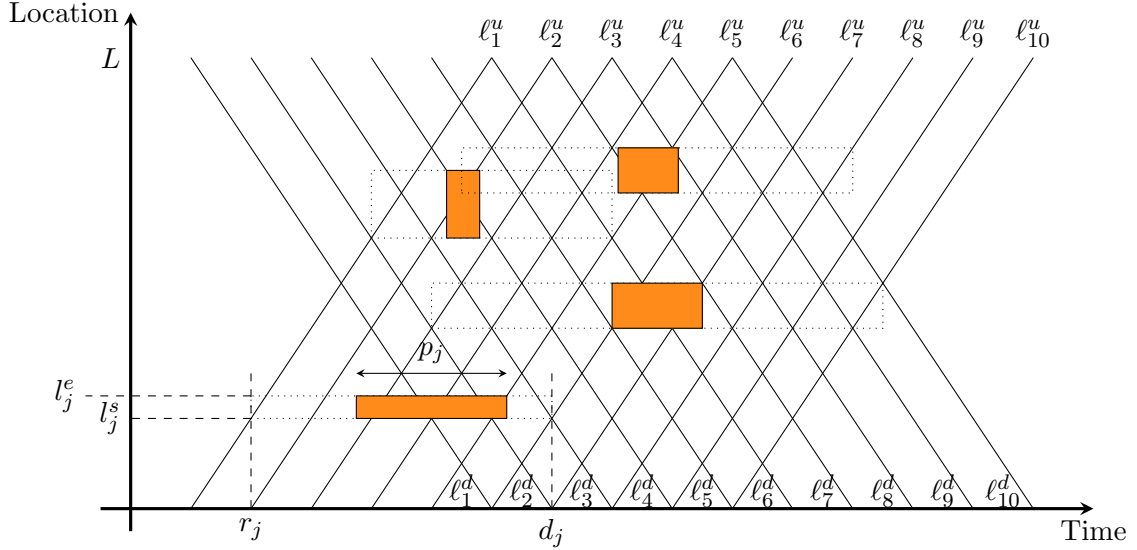

If $\emptyset\in\mc{R}_j$ then the job $j$ can be
removed from $\mc{J}$ without changing the problem. If $R\subset R'$ for two elements
$R,R'\in\mc{R}_j$ then $R'$ can be removed from $\mc{R}_j$, because in any feasible solution $R'$
can be replaced by $R$ without increasing the objective value. If there are two jobs $j,j'$ such
that for every $R\in\mc{R}_j$ there exists an $R'\in\mc{R}_{j'}$ with $R'\subseteq R$ then $j'$ can
be removed from $\mc{J}$ without changing the optimal objective value. In summary, we can make the
following assumptions:
\begin{enumerate}
\item For all $j\in\mc{J}$, $\emptyset\not\in\mc{R}_j$.
\item For all $j\in\mc{J}$ and distinct $R,R'\in\mc{R}_j$, $R\not\subset R'$.
\item For all $j,j'\in\mc{J}$, there exists $R\in\mc{R}_j$ such that $R'\nsubseteq R$ for all $R'\in\mc{R}_{j'}$.
\end{enumerate}

We conclude this section by establishing that the maintenance scheduling problem is NP-complete.

\begin{theorem}\label{thm:hardness}
  The maintenance scheduling problem is NP-complete.
\end{theorem}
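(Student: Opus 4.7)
The problem is in NP in its decision form (``is there a schedule cancelling at most $K$ paths?''): a vector $s=(s_j)_{j\in\mc J}$ of rational start times is a polynomial-size certificate, and both feasibility ($s_j\in[r_j,d_j-p_j]$) and the number of cancelled paths can be checked in polynomial time.

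For NP-hardness I would exploit the reformulation given just before the theorem: a schedule is equivalent to a choice $R_j\in\mc R_j$ for each job $j$, and the objective equals $\bigl|\bigcup_{j\in\mc J}R_j\bigr|$. The natural combinatorial problem to reduce from is therefore: given families $\mc R_1,\dots,\mc R_n$ of subsets of a ground set and an integer $K$, is there a choice $R_j\in\mc R_j$ with $\bigl|\bigcup_j R_j\bigr|\leq K$? This is a well-known NP-complete variant of the hitting-set family; it already contains vertex cover, since for each edge $uv$ of a graph one can take the family $\{\{u\},\{v\}\}$ and a choice of endpoint per edge is exactly a vertex cover.

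The bulk of the proof then consists of encoding such a combinatorial instance geometrically: given families $\mc R_1,\dots,\mc R_n$, construct train paths and jobs so that the possession families produced by the jobs coincide, after the dominance reductions (1)--(3) stated before the theorem, with the prescribed families. Since the unidirectional special case is solvable in polynomial time (as announced in the introduction), this encoding must exploit the bidirectional structure in an essential way. A natural approach is to use the crossings of up- and down-paths as structural anchors: assign a distinct train path (up or down) to each element of the ground set, and place each job so that its feasible shifts within $[r_j,d_j-p_j]$ pick up different combinations of intersecting up- and down-paths, corresponding to the sets in $\mc R_j$.

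I expect the main obstacle to be this geometric fidelity. A job is a box in the (time, location) plane and the oblique grid of up- and down-paths is very rigid, so naive placements tend to produce spurious possessions (extra cancelled paths) or to collapse two intended choices into one. Making the possession families match the prescribed combinatorial instance will require carefully tuned processing times $p_j$, narrow location intervals $[l^s_j,l^e_j]$ matched to the headway $\Delta$ and travel time $\delta$, and possibly auxiliary ``witness'' jobs whose only role is to force certain dominant possessions and thus trim the effective options in each $\mc R_j$ via the dominance reductions. Once the construction is verified, a schedule cancelling at most $K$ paths corresponds exactly to a choice of $R_j\in\mc R_j$ with $\bigl|\bigcup_j R_j\bigr|\leq K$, completing the reduction.
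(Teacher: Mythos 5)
Your NP-membership argument is fine and matches the paper. The hardness half, however, has a genuine gap: you never give the reduction, only a plan for one, and the plan as stated cannot work. You propose to encode an \emph{arbitrary} instance of the ``choose $R_j\in\mc{R}_j$ minimizing $\bigl|\bigcup_j R_j\bigr|$'' problem geometrically, but the possession families produced by a job are far too rigid for that. As the start time $s_j$ sweeps through $[r_j,d_j-p_j]$, the set of cancelled paths changes continuously in a sliding-window fashion, so the realizable families are severely constrained; in particular your vertex-cover gadget $\{\{u\},\{v\}\}$ for an arbitrary pair of paths $u,v$ is unrealizable, since any time window wide enough to let the job hit both $u$ and $v$ as singletons also admits intermediate start times hitting every path in between, producing the family of \emph{all} singletons over an interval --- and that interval structure is exactly the unidirectional case, which the paper solves in polynomial time. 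The obstacle you flag (``geometric fidelity'') is not a technicality to be tuned away with witness jobs; it is the reason a generic source problem fails, and the entire content of the theorem is choosing a source problem whose families \emph{are} realizable.

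The paper resolves this by reducing from a specific known NP-complete problem, HHP (hitting horizontal unit segments by axis-parallel lines, Hassin--Megiddo), whose combinatorial structure ``hit $a_j$ or $a_j+1$ or $b_j$'' matches a realizable family exactly: placing a job with $l^s_j=l^e_j$ near the crossing point of the up-path $\ell^u_{a_j}$ and the down-path $\ell^d_{b_j}$, with an explicitly computed tight window ($r_j=\tfrac{L+a_j+b_j}{2}-\tfrac12$, $d_j=r_j-\tfrac12$, $p_j=\tfrac35$), forces $\mc{R}_j=\{\{\ell^u_{a_j}\},\{\ell^d_{b_j}\},\{\ell^u_{a_j+1}\}\}$. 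This uses the up/down crossing in precisely the way you anticipated with your ``structural anchors'' remark, so your intuition points in the right direction; but without an explicit construction, verified parameters, and a source problem compatible with the geometry, the reduction does not exist yet. To repair your proof, replace the arbitrary-family target with HHP (or another problem whose families are of this crossing-realizable form) and exhibit the job parameters explicitly, then run the two-directional correspondence between hitting sets $(X,Y)$ and possession choices $(R_j)$ as in the paper.
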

\begin{proof}
  Since a given solution can be evaluated in polynomial time, the problem is in the class $NP$.  We
  prove NP-hardness by a reduction from the \textit{minimum hitting horizontal unit segments by
    axis-parallel lines} problem (HHP)~(\cite{HassinM1991}), which can be specified as follows:
  \begin{description}
  \item[Instance.] An instance is given by two sets $A = \{a_1, \dotsc, a_n\}$ and
    $B = \{b_1, \dotsc, b_n\}$ of integers.
  \item[Solution.] A feasible solution is given by two sets $X$ and $Y$ of positive integers such
    that for every $j \in [n]$, $X \cap \{a_j, a_j + 1\} \neq \emptyset$ or
    $Y \cap \{ b_j\} \neq \emptyset$. In other words, for every $ j \in [n]$, $X$ has to contain
    $a_j$ or $a_j +1$ or $Y$ has to contain $b_j$.
  \item[Objective.] Minimize $\lvert X\rvert+\lvert Y\rvert$.
  \end{description}
  Let $(A,B)$ be an instance of the HHP. In order to construct the corresponding instance of the
  \textit{maintenance scheduling} problem (MSP), set
  \begin{align*}
    L &= 2 + \max\{\lvert a_j - b_j\rvert\,:\,j \in [n]\}, & m &= 1 + \max\{\max_{j \in  [n] } a_j,\,\max_{j \in  [n]} b_j\}, 
  \end{align*}
  and $\Delta =1$, $\delta = L$. This defines the path sets
  $\mc{P}^u$ and $\mc{P}^d$. In order to define the jobs, we note that the intersection of the line
  segments $[(a_j, 0), (a_j + L,L)]$ and $[(b_j,L),(b_j + L,0)]$ is
  \[((L + a_j + b_j)/2,\,(L - a_j +b_j)/2).\]
  Let $\mc{J}$ be the set of jobs with the following parameters:
  \begin{align*}
    r_j &= \frac{L + a_j + b_j}{2} - \frac{1}{2},& d_j &= \frac{L + a_j + b_j}{2} - 1,\\
    l_j^s=l_j^e  &= \frac{L - a_j + b_j}{2} - \frac{1}{4},  &p_j &= \frac{3}{5}.
  \end{align*}
  These parameters are chosen in such a way that
  $\mc{R}_j = \{\{\ell^u_{a_j}\},\{\ell^d_{b_j}\},\{\ell^u_{a_j+1}\}\}$ for every $j \in
  \mc{J}$. Let the above instance be denoted by $(\mc{J}, L, m, \delta, \Delta)$. We now argue that
  the HHP instance $(A,B)$ has a solution with $\lvert X\rvert+\lvert Y\rvert\leq K$ if and only if
  the MSP instance $(\mc{J}, L, m, \delta, \Delta)$ has a solution with objective value less than or
  equal to $K$.
  \begin{description}
  \item[\textbf{HHP} $\Longrightarrow$ \textbf{MSP}] Let $(X,Y)$ be a feasible solution for the HHP
    instance $(A,B,k)$ with $\lvert X\rvert+ \lvert Y\rvert \leq k$. We define a feasible collection
    of possessions $(R_j)_{j \in \mc{J}}$ for the MSP instance $(\mc{J}, L, m, \delta)$ as follows:
    \[ R_j = \begin{cases}
        \{\ell_{a_j}^u\}  & \text{if } a_j \in X, \\
        \{\ell_{b_j}^d\}  & \text{if } a_j \notin X,\,b_j \in Y,  \\
        \{\ell_{a_j+1}^u\} & \text{if } a_j \notin X,\,b_j \notin Y,\,a_j+1 \in X.
      \end{cases}
    \]
    In particular, $R_j = \{\ell_a^u\}\implies a \in X$, and
    $R_j = \{\ell_{b}^d\} \implies b \in Y$. Together with
    $\bigcup_{j \in \mc{J}}R_j \subseteq \mc{P}^u \cup \mc{P}^d$,
    $\mc{P}^u \cap \mc{P}^d = \emptyset$, and $|X| + |Y| \leq K$, this implies
    \[ \left| \bigcup_{j \in \mc{J}}R_j \right| = \left| \bigcup_{j \in \mc{J}}R_j \cap \mc{P}^u
      \right| +\left| \bigcup_{j \in \mc{J}}R_j \cap \mc{P}^d \right| \leq |X| + |Y| \leq K. \]
  \item[\textbf{MSP} $\Longrightarrow$ \textbf{HHP}] Let $(R_j)_{j \in \mc{J}}$ be a solution to
    the MSP instance $(\mc{J}, L, m, \delta, \Delta)$ with
    $\left|\bigcup_{j \in \mc{J}}R_j \right| \leq K$. Define $(X,Y)$ for the corresponding HHP
    instance $(A,B)$ as follows:
    \begin{align*}
      X &= \{a \in [m]: \ell_a^u \in \bigcup_{j\in\mc{J}}R_j\}, & Y &= \{b \in [m]: \ell_b^d \in \bigcup_{j\in\mc{J}}R_j\}.
    \end{align*}
    The feasibility of $(X,Y)$ follows from the feasibility of the solution $(R_j)_{j \in \mc{J}}$:
    \[ \forall j \in \mc{J} \ R_j \in\left\{ \{\ell^u_{a_j}\},\,\{\ell^d_{b_j}\},\,
        \{\ell^u_{a_j+1}\}\right\} \implies \forall j \in [n] \ a_j \in X \text{ or } a_j+1 \in X
      \text{ or } b_j \in Y.\]
    We conclude
    \[ |X| + |Y| = \left| \bigcup_{j \in \mc{J}}R_j \cap \mc{P}^u \right| +\left| \bigcup_{j \in
          \mc{J}}R_j \cap \mc{P}^d \right| = \left|\bigcup_{j \in \mc{J}}R_j \right| \leq
      K.\qedhere\]
  \end{description}
\end{proof}

\section{An exact algorithm}\label{sec:exact_algorithm}
In this section, we present a dynamic programming algorithm for the maintenance scheduling
problem. Before proceeding, we need to define more notation. We define the following sets for
$P, X \subseteq \mc{P}$:
\begin{itemize}
\item $R(X,P)$ is the set of paths in $P$ that are on the right-hand side of all paths in $X$:
  \[R(X,P) = \{\ell_i^u\in P\,:\,\ell_j^u\in X\implies j<i\}\cup\{\ell_i^d\in P\,:\,\ell_j^d\in
    X\implies j<i\}.\]
\item $L(X,P)$ is the set of paths in $P$ that are on the left-hand side of all paths in $X$:
  \[L(X,P) = \{\ell_i^u\in P\,:\,\ell_j^u\in X\implies j>i\}\cup\{\ell_i^d\in P\,:\,\ell_j^d\in
    X\implies j>i\}.\]
\item $J(P)$ is the set of jobs that possess at least one path in $P$ regardless of their start
  times:
  \[J(P) =\{j \in \mc{J}\,:\, R \cap P \neq \emptyset\text{ for all }R\in\mc{R}_j\}.\]
\item $J_M(X,P)$ is the set of jobs in $J(P)$ that can be scheduled such that the cancelled
  paths in $P$ are all in $X$:
  \[J_M(X,P) =\{j \in J(P)\,:\, R \cap P \subseteq X\text{ for some }R \in \mc{R}_j\}.\]
\end{itemize}
\begin{lemma}\label{lem:remaining_jobs}
  For all $X,P\subseteq\mc{P}$, $J(P\setminus X)=J(P)\setminus J_M(X,P)$.
\end{lemma}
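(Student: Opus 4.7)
The plan is to prove the set equality by double inclusion, with both inclusions following from a straightforward unpacking of the definitions. The only ``idea'' in the argument is the elementary observation that for any set $R$ and subsets $X, P$ of a common universe,
\[
R \cap (P \setminus X) \neq \emptyset \quad \Longleftrightarrow \quad R \cap P \not\subseteq X,
\]
and, as a one-sided consequence, $R \cap (P \setminus X) \neq \emptyset$ implies $R \cap P \neq \emptyset$.

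For the inclusion $J(P) \setminus J_M(X,P) \subseteq J(P \setminus X)$, I would take $j$ in the left-hand side. By definition of $J_M(X,P)$, the fact that $j \in J(P)$ but $j \notin J_M(X,P)$ means that no possession $R \in \mc{R}_j$ satisfies $R \cap P \subseteq X$, i.e., every $R \in \mc{R}_j$ has $R \cap P \not\subseteq X$. By the equivalence above, this gives $R \cap (P \setminus X) \neq \emptyset$ for every $R \in \mc{R}_j$, which is exactly the definition of $j \in J(P \setminus X)$.

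For the reverse inclusion $J(P \setminus X) \subseteq J(P) \setminus J_M(X,P)$, take $j \in J(P \setminus X)$, so $R \cap (P \setminus X) \neq \emptyset$ for all $R \in \mc{R}_j$. The one-sided consequence above yields $R \cap P \neq \emptyset$ for every $R \in \mc{R}_j$, placing $j$ in $J(P)$. The equivalence additionally gives $R \cap P \not\subseteq X$ for every $R \in \mc{R}_j$, which rules out $j \in J_M(X,P)$ (since membership in $J_M(X,P)$ would require the existence of at least one $R \in \mc{R}_j$ with $R \cap P \subseteq X$). Hence $j \in J(P) \setminus J_M(X,P)$.

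There is no real obstacle here; the main thing to be careful about is not to conflate the conditions $R \cap P = \emptyset$ and $R \cap P \subseteq X$, and to note that the latter is automatic when the former holds, which is why the ``$j \in J(P)$'' clause in the definition of $J_M(X,P)$ does not cause trouble in either direction.
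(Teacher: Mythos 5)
Your proof is correct and follows essentially the same route as the paper, which establishes the identity via the single chain of equivalences $j\in J(P\setminus X)\iff R\cap(P\setminus X)\neq\emptyset$ for all $R\in\mc{R}_j \iff R\cap P\neq\emptyset$ and $R\cap P\not\subseteq X$ for all $R\in\mc{R}_j \iff j\in J(P)\setminus J_M(X,P)$; your double-inclusion argument is just this equivalence unpacked in both directions, resting on the same key observation that $R\cap(P\setminus X)\neq\emptyset$ is equivalent to $R\cap P\not\subseteq X$ (which in turn forces $R\cap P\neq\emptyset$). Your closing remark about not conflating $R\cap P=\emptyset$ with $R\cap P\subseteq X$ is exactly the point the paper's middle condition makes explicit.
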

\begin{proof}
  Let $j\in\mc{J}$ be an arbitrary job. The lemma is a consequence of the following equivalences:
  \begin{multline*}
    j\in J(P\setminus X)\iff R\cap(P\setminus X)\neq\emptyset\text{ for all } R\in\mc{R}_j\\
    \iff R\cap P\neq\emptyset\text{ and }R\cap P\not\subseteq X\text{ for all } R\in\mc{R}_j \iff
    j\in J(P)\setminus J_M(X,P). \qedhere
  \end{multline*}
\end{proof}
Let $c(P)$ denote the minimum number of cancelled paths in the set $P$ for scheduling jobs in
$J(P)$, or formally,
\[c(P)=\min\left\{\left\lvert\bigcup_{j\in J(P)}(R_j\cap P)\right\rvert\,:\,R_j\in\mc{R}_j\text{ for
    all }j\in J(P)\right\}.\]
In particular, $c(\mc{P})$ is the optimal objective value for the complete problem.
\begin{lemma}\label{lem:recurrence}
  The optimal value $c(\mc{P})$ (and a corresponding optimal solution) can be computed by the
  following recursion:
  \[c(P)=
    \begin{cases}
      \min\{\lvert X \cap P\rvert + c(P\setminus X)\,:\,X\in\mc{R}_{j'}\}\text{ for any }j'\in J(P) & \text{if }J(P)\neq\emptyset, \\
      0 & \text{if }J(P)=\emptyset.
    \end{cases}
\]
\end{lemma}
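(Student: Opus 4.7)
My plan is to verify the base case immediately and then prove the recursive formula by two matching inequalities for any fixed $j'\in J(P)$, with Lemma~\ref{lem:remaining_jobs} playing the central role in separating the residual jobs from those whose contribution can be absorbed into the paths already cancelled by $j'$. The base case $J(P)=\emptyset$ is immediate because the union in the definition of $c(P)$ is then empty, so $c(P)=0$. For the recursive case, assume $J(P)\neq\emptyset$ and fix $j'\in J(P)$.

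For the upper bound, given $X\in\mc{R}_{j'}$ I would construct a feasible assignment $(R_j)_{j\in J(P)}$ witnessing $c(P)\leq|X\cap P|+c(P\setminus X)$ in three stages: set $R_{j'}=X$; for each $j\in J_M(X,P)\setminus\{j'\}$, pick some $R_j\in\mc{R}_j$ with $R_j\cap P\subseteq X$, which is possible by the very definition of $J_M(X,P)$; and for each remaining $j\in J(P)\setminus J_M(X,P)$, which equals $J(P\setminus X)$ by Lemma~\ref{lem:remaining_jobs}, inherit $R_j$ from an optimal solution for the subproblem defining $c(P\setminus X)$. Splitting $\bigcup_{j}(R_j\cap P)$ along the partition $P=(X\cap P)\cup(P\setminus X)$, the portion inside $X$ is contained in $X\cap P$ and so contributes at most $|X\cap P|$, while the portion inside $P\setminus X$ is at most $c(P\setminus X)$ by construction; minimising over $X\in\mc{R}_{j'}$ then gives the upper inequality.

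For the lower bound, take an optimal family $(R_j)_{j\in J(P)}$ achieving $c(P)$ and set $X=R_{j'}\in\mc{R}_{j'}$. Since $P\setminus X\subseteq P$ forces $J(P\setminus X)\subseteq J(P)$, the restricted family $(R_j)_{j\in J(P\setminus X)}$ is feasible for the subproblem, and the disjoint subsets $X\cap P$ and $\bigcup_{j\in J(P\setminus X)}(R_j\cap(P\setminus X))$ of $\bigcup_{j\in J(P)}(R_j\cap P)$ yield
$$c(P)\geq|X\cap P|+\left|\bigcup_{j\in J(P\setminus X)}(R_j\cap(P\setminus X))\right|\geq |X\cap P|+c(P\setminus X),$$
which is at least the minimum on the right-hand side. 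The one place where care is required is the treatment of $J_M(X,P)\setminus\{j'\}$ in the upper bound: these jobs lie in $J(P)$ and so must be scheduled, yet their contribution to $\bigcup_j(R_j\cap P)$ has to be absorbed entirely into $X\cap P$ without creating any spillover onto $P\setminus X$. Lemma~\ref{lem:remaining_jobs} is exactly what makes this partition of $J(P)$ into absorbable and non-absorbable jobs go through cleanly, so the substantive work for the recursion is already discharged by that lemma.
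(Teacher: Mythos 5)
Your proof is correct and follows essentially the same route as the paper's: both fix $j'\in J(P)$, condition on the choice $X\in\mc{R}_{j'}$, absorb the jobs in $J_M(X,P)$ into $X\cap P$, and invoke Lemma~\ref{lem:remaining_jobs} to identify the residual job set with $J(P\setminus X)$. The paper compresses this into a single chain of equalities (with the absorption step stated as a ``note''), whereas you unpack it into two explicit inequalities --- a purely presentational difference, and your lower-bound argument via the two disjoint subsets of $\bigcup_{j\in J(P)}(R_j\cap P)$ is a clean way to make the paper's implicit direction rigorous.
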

\begin{proof}
  The case $J(P)=\emptyset$ is trivial, so we can assume $J(P)\neq\emptyset$ and pick an arbitrary
  $j'\in J(P)$. Note that for fixed $R_{j'}=X\in\mc{R}_{j'}$, the minimum in the definition of
  $c(P)$ is obtained by choosing $R_j\in\mc{R}_j$ with $R_j\cap P\subseteq X$ for all
  $j\in J_M(X,P)$.  As a consequence,
  \begin{multline*}
    c(P)=\min\left\{\left\lvert\bigcup_{j\in J(P)}(R_j\cap P)\right\rvert\,:\,R_j\in\mc{R}_j\text{
        for all }j\in J(P)\right\}\\ = \min\limits_{X\in\mc{R}_{j'}}\left(\lvert X\cap
      P\rvert+\min\left\{\left\lvert\bigcup_{j\in J(P\setminus X)}(R_{j}\cap (P\setminus
          X))\right\rvert\,:\,R_j\in\mc{R}_j\text{ for
          all }j\in J(P\setminus X)\right\}\right)\\
    =\min\limits_{X\in\mc{R}_{j'}}\left(\lvert X\cap P\rvert+c(P\setminus X)\right).\qedhere
\end{multline*}
\end{proof}
Let $J_R(X,P)$ and $J_L(X,P)$ be the sets of jobs in $J(P\setminus X)$ such that all the canceled
path in $P\setminus X$ are in $R(X,P)$ and $L(X,P)$, respectively:
\begin{align*}
  J_R(X,P) &= \{j \in J(P\setminus X)\,:\,(Y \cap P)\setminus X \subseteq R(X,P)\text{ for all
             }Y\in\mc{R}_j\},\\
  J_L(X,P) &= \{j \in J(P\setminus X)\,:\,(Y \cap P)\setminus X \subseteq L(X,P)\text{ for all
    }Y\in\mc{R}_j\}.
\end{align*}
\begin{definition}\label{def:splitting_job}
  A job $j \in J(P)$ is called \emph{splitting} if $J(P\setminus X)= J_L(X,P)\cup J_R(X,P)$ for all
  $X\in\mc{R}_j$. Let $J'(P)$ denote the set of splitting jobs in $J(P)$.
\end{definition}
\begin{lemma}\label{lem:recursion_splitting_job}
  Let $j\in J'(P)$ be a splitting job for the set $P$. Then, for every $X\in\mc{R}_j$,
  \[c(P\setminus X)=c(R(X,P))+c(L(X,P)).\]
\end{lemma}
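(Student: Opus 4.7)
The plan is to prove the equality $c(P\setminus X)=c(R(X,P))+c(L(X,P))$ by double inequality. Abbreviate $R=R(X,P)$ and $L=L(X,P)$, and note upfront the easy facts that $L\cup R\subseteq P\setminus X$ and $L\cap R=\emptyset$ (an up-path index cannot be both $<$ and $>$ all indices of up-paths in $X$, and similarly for down-paths).

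The crucial preliminary step is the identification $J(L)=J_L(X,P)$ and $J(R)=J_R(X,P)$. The inclusions $J_L\subseteq J(L)$ and $J_R\subseteq J(R)$ are immediate: for $j'\in J_L$ and any $Y\in\mc{R}_{j'}$, the set $Y\cap(P\setminus X)$ is non-empty and contained in $L$, so $Y\cap L\neq\emptyset$. For the reverse inclusions I would use the splitting hypothesis: any $j'\in J(L)$ automatically lies in $J(P\setminus X)$ (since $L\subseteq P\setminus X$), so by splitting $j'\in J_L\cup J_R$; the possibility $j'\in J_R\setminus J_L$ is ruled out because it would force some possession $Y$ to have $Y\cap L\neq\emptyset$ yet $Y\cap(P\setminus X)\subseteq R$, which combined with $L\cap R=\emptyset$ yields a contradiction. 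A useful consequence is that for $j'\in J_L$ and any $Y\in\mc{R}_{j'}$ one has $Y\cap(P\setminus X)=Y\cap L$, and symmetrically for $J_R$.

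For the inequality $c(P\setminus X)\geq c(L)+c(R)$ I would take an optimal family $(R_{j'})_{j'\in J(P\setminus X)}$ and use Definition~\ref{def:splitting_job} to write $J(P\setminus X)=J_L\cup J_R$. By the bookkeeping above, the cancellation set in $P\setminus X$ decomposes as the disjoint union of $\bigcup_{j'\in J_L}R_{j'}\cap L$ and $\bigcup_{j'\in J_R}R_{j'}\cap R$. The former, with possessions indexed by $J_L=J(L)$, is feasible for the $c(L)$ subproblem and so has size at least $c(L)$; likewise for $R$. Summing yields the bound. For the reverse inequality I would take optimal families $(R_{j'}^L)_{j'\in J(L)}$ and $(R_{j'}^R)_{j'\in J(R)}$ for the two subproblems and, for each $j'\in J(P\setminus X)=J_L\cup J_R$, select $R_{j'}^L$ if $j'\in J_L$ and $R_{j'}^R$ otherwise (resolving the overlap $J_L\cap J_R$ arbitrarily). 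Again the bookkeeping places all induced cancellations in $P\setminus X$ into the disjoint union $L\sqcup R$, giving a feasible solution of objective value at most $c(L)+c(R)$.

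The main obstacle is the identification $J_L=J(L)$ and $J_R=J(R)$: once this is in hand the proof reduces to a routine disjoint-union counting argument, but the identification is precisely where the splitting hypothesis is consumed, together with the disjointness $L\cap R=\emptyset$. Everything else is careful tracking of which job contributes to which side of the decomposition.
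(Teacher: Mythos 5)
Your proposal is correct and takes essentially the same route as the paper's proof: both arguments rest on the disjointness of $L(X,P)$ and $R(X,P)$ together with the splitting property $J(P\setminus X)=J_L(X,P)\cup J_R(X,P)$, so that the cardinality of the union of cancelled paths decomposes additively and the minimization separates into the two subproblems $c(L(X,P))$ and $c(R(X,P))$. The only differences are presentational --- you phrase the separation of the minimum as a double inequality and make explicit the identifications $J(L(X,P))=J_L(X,P)$, $J(R(X,P))=J_R(X,P)$ and the equality $Y\cap(P\setminus X)=Y\cap L(X,P)$ for $j'\in J_L(X,P)$, $Y\in\mc{R}_{j'}$, which the paper uses implicitly in its final step --- and note that your parenthetical justification of $L(X,P)\cap R(X,P)=\emptyset$ tacitly assumes $X$ contains paths in both directions, exactly the same tacit assumption underlying the paper's displayed disjointness claim.
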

\begin{proof}
  It follows from the definitions that for all $j'\in J_R(X,P)$, $j''\in J_L(X,P)$, $Y'\in\mc{R}_j$
  and $Y''\in\mc{R}_{j''}$,
  \[(Y'\cap(P\setminus X))\cap(Y''\cap(P\setminus X))=\emptyset.\]
  As a consequence, for every choice of $R_{j'}\in\mc{R}_{j'}$ for the jobs $j'\in J(P\setminus X)$,
  \[\left\lvert\bigcup_{j'\in J(P\setminus X)}(R_{j'}\cap (P\setminus X))\right\rvert=\left\lvert\bigcup_{j'\in
        J_R(X,P)}(R_{j'}\cap (P\setminus X))\right\rvert+\left\lvert\bigcup_{j'\in
        J_L(X,P)}(R_{j'}\cap (P\setminus X))\right\rvert,\] and therefore,
  \begin{multline*}
    c(P\setminus X)=\min\left\{\left\lvert\bigcup_{j'\in J(P\setminus X)}(R_{j'}\cap (P\setminus
        X))\right\rvert\,:\,R_{j'}\in\mc{R}_{j'}\text{ for all }j'\in J(P\setminus X)\right\}\\
    =\min\left\{\left\lvert\bigcup_{j'\in J_R(X,P)}(R_{j'}\cap (P\setminus
        X))\right\rvert\,:\,R_{j'}\in\mc{R}_{j'}\text{ for all }j'\in J_R(P\setminus X)\right\}\\
    +\min\left\{\left\lvert\bigcup_{j'\in J_L(X,P)}(R_{j'}\cap (P\setminus
        X))\right\rvert\,:\,R_{j'}\in\mc{R}_{j'}\text{ for all }j'\in J_L(P\setminus X)\right\}\\
    =c(R(X,P))+c(L(X,P)).\qedhere
  \end{multline*}
\end{proof}
The next theorem is an immediate consequence of Lemmas~\ref{lem:recurrence}
and~\ref{lem:recursion_splitting_job}.
\begin{theorem}\label{thm:recursion_1}
  The optimal value $c(\mc{P})$ (and a corresponding optimal solution) satisfies the
  following recursion:
  \[c(P)=
    \begin{cases}
      0 & \text{if }J(P)=\emptyset,\\
      \min\{\lvert X \cap P\rvert + c(P\setminus X)\,:\,X\in\mc{R}_j\}\\
      \qquad\qquad\text{for an arbitrary }j\in J(P) & \text{if }J'(P)=\emptyset\neq J(P),\\
      \min\{\lvert X \cap P\rvert+c(R(X,P))+c(L(X,P))\,:\,X \in \mc{R}_j\}\\
      \qquad\qquad\text{for an arbitrary }j\in J'(P) & \text{if }J'(P)\neq\emptyset.
    \end{cases}
\]
\end{theorem}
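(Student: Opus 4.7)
The statement combines two lemmas that have already done the real work, so my plan is essentially to verify case-by-case that each branch of the recursion is an instance of one of those lemmas.

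First I would handle the base case $J(P)=\emptyset$: here $c(P)=0$ is immediate from the definition of $c$ (the empty union has cardinality $0$), and this is also the first case of Lemma~\ref{lem:recurrence}, so nothing is left to check.

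Next, for the case $J(P)\neq\emptyset$ and $J'(P)=\emptyset$, the statement is literally the nontrivial branch of Lemma~\ref{lem:recurrence} applied to an arbitrary $j\in J(P)$, so again there is nothing new to prove; one only needs to remark that in the absence of a splitting job we are forced to use this general recursion.

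The only case requiring a (very short) argument is $J'(P)\neq\emptyset$. Here I would pick an arbitrary splitting job $j\in J'(P)\subseteq J(P)$. Since $j\in J(P)$, Lemma~\ref{lem:recurrence} gives
\[c(P)=\min\{\lvert X\cap P\rvert+c(P\setminus X)\,:\,X\in\mc{R}_j\}.\]
Because $j$ is splitting, Lemma~\ref{lem:recursion_splitting_job} applies to every $X\in\mc{R}_j$ and yields $c(P\setminus X)=c(R(X,P))+c(L(X,P))$. Substituting this into the minimum above produces exactly the third branch of the claimed recursion. The correctness of picking an \emph{arbitrary} $j\in J'(P)$ follows from the fact that Lemma~\ref{lem:recurrence} holds for any $j'\in J(P)$, so in particular for any splitting one.

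There is no real obstacle here; the whole proof is a one-line substitution once the two lemmas are in place. The only minor thing I would flag is that the recursion is well-defined as a computation, since each recursive call is on a strictly smaller path set $P\setminus X$, $R(X,P)$, or $L(X,P)$ (strict because $X\in\mc{R}_j$ and $j\in J(P)$ guarantees $X\cap P\neq\emptyset$), so the recursion terminates and Theorem~\ref{thm:recursion_1} really does compute $c(\mc{P})$ along with the corresponding argmin selections that recover an optimal solution.
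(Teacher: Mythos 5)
Your proof is correct and is essentially the paper's own argument: the paper states Theorem~\ref{thm:recursion_1} as an immediate consequence of Lemmas~\ref{lem:recurrence} and~\ref{lem:recursion_splitting_job}, and your case analysis simply writes out the substitution of the splitting-job identity $c(P\setminus X)=c(R(X,P))+c(L(X,P))$ into the general recursion. Your added remark on termination (each recursive call is on a strictly smaller set since $j\in J(P)$ forces $X\cap P\neq\emptyset$) is a correct and welcome detail the paper leaves implicit.
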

This recursion can be turned into an algorithm for solving the maintenance scheduling problem,
provided we can identify a splitting job. In Section~\ref{sec:unidirectional} we describe how this
can be done efficiently under the assumption that there are only up-paths (or equivalently, only
down-paths). The algorithm for the bidirectional case with some additional assumptions, which is presented in
Section~\ref{sec:bounded_intersections}, can be interpreted as being based on a refinement of the
splitting concept.

\section{Integer programming formulations}\label{sec:MIP}
In this section, we present two integer programming formulations for the maintenance scheduling
problem: a path indexed formulation and a set covering formulation.

For every path $\ell \in \mc{P}$, let $y_{\ell}$ be a binary variable which takes value one if and
only if path $\ell$ is cancelled. For every $R\in\mc{R}=\cup_{j \in \mc{J}}\mc{R}_j$, let $x_{R}$ be
a binary variable where $x_R=1$ indicates that the paths in $R$ are cancelled. We obtain the
following path indexed model (PIM):
\begin{align}
  \text{[PIM]}\qquad  \text{Minimise }  \sum_{\ell \in \mc{P}} &y_{\ell}\ \text{subject to} \nonumber \\
  \sum_{R \in \mc{R}_j}x_R &\geq 1  && \text{for all } j \in \mc{J}, \label{eq:PIM_1} \\
  y_{\ell} &\geq x_R  && \text{for all } R \in \mc{R},\ \ell \in R, \label{eq:PIM_2}  \\
  x_R &\in \{0,1\} && \text{for all } R \in \mc{R}, \\
  y_{\ell}  &\in \{0,1\} && \text{for all } \ell \in \mc{P}.
\end{align}

In order to describe the set covering formulation, we need the following notion of adjacency between
train paths.
\begin{definition}\label{def:adjacent}
  Two distinct paths $\ell$ and $\ell'$ in $\mathcal P$ are called \textit{adjacent} if there exists a
  horizontal line segment $l=[(x_1,y),(x_2,y)]$ such that $\ell$ and $\ell'$ are the only paths in
  $\mathcal P$ which $l$ intersects.  
\end{definition}
For example, in Figure~\ref{Figure:instance1}, path $\ell^u_1$ is adjacent to paths
$\ell^d_1$, $\ell^d_2$, $\ell^d_3$, $\ell^d_4$, $\ell^d_5$, and $\ell^d_6$. Let $G=(\mathcal P,E)$ be the
graph with node set $\mathcal P$ in which there is an edge between $\ell$ and $\ell'$ if and only if
they are adjacent. 
\begin{definition}\label{def:span}
  A set $S\subseteq\mathcal P$ is a \emph{span}, if it induces a connected subgraph of $G$.
\end{definition}
A feasible solution for the maintenance scheduling problem, that is a collection
$(R_j)_{j\in\mathcal J}$ of possessions, corresponds to a set of pairwise disjoint spans in $G$ such
that
\begin{enumerate}
\item every possession in the solution is a subset of a unique span, and
\item the union of spans is equal to the union of possessions in the solution.
\end{enumerate}

Now, we can present a set covering formulation for the problem. We denote the set of all spans for
an instance of the maintenance scheduling problem by $F_0$. For each job $j \in \mc{J}$, $B_j$
denotes the set of all the spans in $F_0$ which cover job $j$ or more formally:
$B_j = \{S \in F_0: \exists R \in \mc{R}_j \ R \subseteq S\}$. We introduce a binary variable $x_S$
for every $S\in F_0$, where $x_S=1$ indicates that the paths in $S$ are cancelled. The set cover
model (SCM) is given by
 \begin{align}
   \text{[SCM] } \text{Minimise} \sum_{S \in F_0}&\lvert S\rvert x_S \qquad\text{subject to}\nonumber \\
                                  \sum_{S \in B_j}x_S &\geq 1   &&\text{for all } j \in \mc{J},  \label{eq:set_cover}\\
                                  x_S &\in \{0,1\}    &&\text{for all } S \in F_0. \label{eq:binary_xS}  									
\end{align}      
The number of variables in [SCM] depends on the number of intersections of paths, the number of
paths, and the size of the possessions. If the paths are pairwise disjoint, then the number of spans
is $\lvert\mathcal P\rvert(\lvert\mathcal P\rvert+1)/2$. Another interesting case is that all
possessions have size one, in which case we can restrict the problem to the $\lvert\mathcal P\rvert$
variables corresponding to the singleton spans. The number of intersections is maximal when every
up-path intersects every down-path. Then the graph $G$ is a complete graph and the number of spans
is $2^{\lvert\mathcal P\rvert}-1$.

\section{The unidirectional case}\label{sec:unidirectional}
In this section, we consider the following special case of the maintenance scheduling problem.
\begin{definition}\label{def:unidirectional}
  The \emph{unidirectional} maintenance scheduling problem is the variant in which we have only paths
  in one direction, say only up-paths.
\end{definition}
For the unidirectional problem we can omit the upper index $u$ or $d$ on the paths, and simply write
$\mathcal P=\{\ell_1,\dots,\ell_m\}$. For $j \in \mc{J}$, let $r'_j$ and $d'_j$ be the indices of the leftmost and rightmost path
in the set $\mc{R}^u_j:=\bigcup_{R \in \mc{R}_j} R$:
\begin{align*}
 r'_j &= \min\{i : \ell_i \in \mc{R}^u_j\},& d'_j &= \max\{i : \ell_i \in \mc{R}^u_j\}.
\end{align*}
In this setting every non-dominated possession for a job $j$ contains the same number of
paths, so that we can use paths as the unit for processing time, and let $p'_j$ denote the
cardinality of the elements of $\mathcal R_j$. In other words, $\mc{R}_j$ has the form
  \[\mc{R}_j=\left\{ \left[\ell_{r'_j},\,\ell_{r'_j+p'_j-1}\right],\ \left[\ell_{r'_j+1},\,\ell_{r'_j+p'_j}\right],\,
      \dots,\ \left[\ell_{d'_j-p'_j+1},\,\ell_{d'_j}\right]\right\}.\]
If there are two jobs $j,j'\in\mathcal J$ with $p'_{j'}\leq
p'_j$ and $r'_j\leq r'_{j'}$ and $ d'_{j'}\leq d'_j$, then the job $j'$ can be removed from the problem
because it can always be scheduled in the shadow of job $j$. As a consequence we assume without loss
of generality,
for all $j,j'\in\mathcal J$
\begin{equation}\label{eq:nondom}
  p'_{j'}\leq p'_j\implies r'_j>r'_{j'}\text{ or } d'_j<d'_{j'}
\end{equation}
In particular,
\begin{equation}\label{eq:nondom_equal}
  p'_{j'}= p'_j\implies r'_j<r'_{j'}<d'_j<d'_{j'}\text{ or } r'_{j'}<r'_{j}<d'_{j'}<d'_{j}.
\end{equation}

\subsection{A polynomial time algorithm}\label{subsec:unidir_interval_poly_time_algorithm}
The unidirectional maintenance scheduling problem can be interpreted as a special case of
\emph{real-time scheduling to minimise machine busy times}~\cite{KhandekarSST2015}. In this problem,
jobs that are given by release time, due date, processing time and demand for machine capacity have
to be scheduled on machines which can process multiple jobs at the same time subject to a machine
capacity, and the objective is to minimise the total busy time of all machines, where a machine is
busy whenever it processes at least one job. The unidirectional maintenance scheduling problem
corresponds to the case where the machine capacity is infinite, which was shown to be solvable in
polynomial time in~\cite[Theorem 3.2]{KhandekarSST2015}. More precisely, for the unidirectional case
the dynamic programming algorithm described in Section~\ref{sec:exact_algorithm} turns out to be
essentially the algorithm for the real-time scheduling problem with infinite machine capacity
described in~\cite{KhandekarSST2015}. The definitions of the sets $R(X,P)$ and $L(X,P)$ from
Section~\ref{sec:exact_algorithm} simplify as follows. For a set $X\subseteq\mathcal P$, let
$i_0(X)=\min\{i\,:\,\ell_i\in X\}$ and $i_1(X)=\max\{i\,:\,\ell_i\in X\}$
\begin{align*}
R(X,P) &= \{\ell_i\in P\,:\,i> i_1(X)\}, & L(X,P) &= \{\ell_i\in P\,:\,i< i_0(X)\}.
\end{align*}

A crucial observation is that jobs of maximal length are splitting.
\begin{lemma}\label{lem:splitting_job}
  Let $P\subseteq\mc{P}$ be a set of paths, and let $j\in J(P)$ be a job of maximal length, (i.e.,
  $p'_j\geq p'_{j'}$ for all $j'\in J(P)$). Then $j$ is a splitting job for $P$.
\end{lemma}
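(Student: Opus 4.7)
The plan is to unpack Definition~\ref{def:splitting_job} directly, using the very simple structure of possessions in the unidirectional setting. Fix $j\in J(P)$ of maximal length and an arbitrary $X\in\mathcal R_j$; then $X=[\ell_{i_0},\ell_{i_0+1},\dots,\ell_{i_1}]$ is a contiguous block with $i_1-i_0+1=p'_j$. I need to show that every $j'\in J(P\setminus X)$ belongs to $J_L(X,P)\cup J_R(X,P)$.

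The first step is to translate these memberships into inequalities on the endpoints of $j'$. Since every $Y\in\mathcal R_{j'}$ is a contiguous block of length $p'_{j'}$ and $L(X,P)$ consists of paths with index $<i_0$ while $R(X,P)$ consists of paths with index $>i_1$, the condition ``$(Y\cap P)\setminus X\subseteq L(X,P)$ for every $Y\in\mathcal R_{j'}$'' is equivalent to the rightmost possession of $j'$ having right endpoint at most $i_1$, i.e.\ $d'_{j'}\leq i_1$. Symmetrically, $j'\in J_R(X,P)$ iff $r'_{j'}\geq i_0$. So the claim reduces to showing that if both $r'_{j'}<i_0$ and $d'_{j'}>i_1$ then $j'\notin J(P\setminus X)$.

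The main step is a direct ``fit-inside'' construction that uses the maximality hypothesis. Assume $r'_{j'}\leq i_0-1$ and $d'_{j'}\geq i_0+p'_j$. Because $j$ has maximal length, $p'_{j'}\leq p'_j$, and I can consider the block
\[
Y_0=\bigl[\ell_{i_0},\,\ell_{i_0+p'_{j'}-1}\bigr].
\]
A short check of endpoints ($i_0\geq r'_{j'}+1$ and $i_0+p'_{j'}-1\leq i_0+p'_j-1\leq d'_{j'}-1$) shows that $Y_0$ is a valid possession of $j'$, and $p'_{j'}\leq p'_j$ gives $Y_0\subseteq X$. Consequently $Y_0\cap(P\setminus X)=\emptyset$, contradicting $j'\in J(P\setminus X)$. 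This is precisely the place where the hypothesis $p'_j\geq p'_{j'}$ is used.

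The only loose end is the degenerate case $j'=j$, in which one does not construct $Y_0$ at all: $X$ itself is a possession of $j$ with $X\cap(P\setminus X)=\emptyset$, so Lemma~\ref{lem:remaining_jobs} already gives $j\notin J(P\setminus X)$. I do not anticipate any real obstacle beyond the index bookkeeping above; the heart of the argument is the one-line observation that maximality of $p'_j$ lets any shorter job be placed entirely inside $X$.
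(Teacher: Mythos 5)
Your proof is correct and follows essentially the same route as the paper's: both arguments reduce to the observation that a job $j'$ whose possessions reach both sides of $X=[\ell_{i_0},\ell_{i_1}]$ must satisfy $r'_{j'}<i_0$ and $d'_{j'}>i_1$, whereupon maximality of $p'_j$ lets the possession $[\ell_{i_0},\ell_{i_0+p'_{j'}-1}]\in\mathcal{R}_{j'}$ sit entirely inside $X$, contradicting $j'\in J(P\setminus X)$. The only differences are cosmetic --- you merge the paper's case split ($Y_1=Y_2$ versus $Y_1\neq Y_2$) into a single endpoint argument, and your stated ``equivalences'' for membership in $J_L(X,P)$ and $J_R(X,P)$ are slightly stronger than what holds for general $P\subseteq\mathcal{P}$, but only the valid (sufficient) direction is actually used.
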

\begin{proof}
  Suppose $j$ is not a splitting job. Then there exist $X=[\ell_a,\ell_b]\in\mc{R}_j$ and
  \[j'\in J(P\setminus X)\setminus(J_L(X,P)\cup J_R(X,P)).\]
  As a consequence, there exist $Y_1,Y_2\in\mc{R}_{j'}$ with $Y_1\cap R(X,P)\neq\emptyset$ and
  $Y_2\cap L(X,P)\neq\emptyset$. If $Y_1=Y_2$, then $Y_1\supsetneq X$ which contradicts the
  maximality of $j$. If $Y_1\neq Y_2$, then $[\ell_a,\ell_{a+p'_{j'}-1}]\in\mc{R}_{j'}$, and the
  maximality of $j$ implies $[\ell_a,\ell_{a+p'_{j'}-1}]\subseteq X$. But then $j'\in J_M(X,P)$
  which contradicts $j'\in J(P\setminus X)$.
\end{proof}
The next lemma restricts the set of possible start paths for spans and possessions. For this purpose, let
\begin{align*}
  A &= \{d'_j-p'_j+1\,:\,j\in\mathcal J\}, \\
  B_i &= \{i+p'_{j}-1\,:\,j\in\mathcal J\text{ with }r'_j\leq i\leq d'_j-p'_j+1\} &&\text{for }i\in A, \\
  C &= \{r'_{j}+p'_{j}-1\,:\,j\in\mathcal J\}.
\end{align*}
\begin{lemma}[Lemma 3.3 in \cite{KhandekarSST2015}]\label{lem:span_properties}
  There exists an optimal collection of disjoint spans in such a way that every span
  $[\ell_i,\ell_k]$ satisfies $i\in A$ and $k\in B_i\cup C$.
  Moreover, there exists an optimal solution in which for each job $j \in \mc{J}$, its possession
  starts at path $\ell_{r'_j}$ or path $\ell_{i}$ for some $i\in A$.
\end{lemma}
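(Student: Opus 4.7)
The plan is an exchange argument. Starting from any optimal solution, I first force the start of every span into $A$ via rightward shifts, then force the end of every span into $B_i \cup C$ by trimming from the right; the possession claim then drops out of the resulting configuration. Throughout, in the unidirectional case a span is an interval $[\ell_i, \ell_k]$ of consecutive paths, a set $\mathcal J_S$ of jobs assigned to $S$ is coverable iff $i \leq d'_j - p'_j + 1$ and $r'_j + p'_j - 1 \leq k$ for every $j \in \mathcal J_S$, and two distinct spans of a feasible solution must be separated by at least one uncancelled path (otherwise they would form a single connected component of $G$, hence one span).

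For the first claim, I would use an extremal argument: among all optimal solutions, pick one that maximises $\sum_{\ell_a \in U} a$, where $U \subseteq \mathcal P$ is the set of cancelled paths. Assume for contradiction that some span $S = [\ell_i, \ell_k]$ in this solution has $i \notin A$. Every $j \in \mathcal J_S$ satisfies $d'_j - p'_j + 1 \geq i$ because $j$ is coverable by $S$, and since no job attains equality we obtain $d'_j - p'_j + 1 \geq i + 1$. Replacing $S$ by the unit-shifted span $[\ell_{i+1}, \ell_{k+1}]$ therefore still covers $\mathcal J_S$; if the next span began at $\ell_{k+2}$, the shift causes $S$ to merge with it into a single span of the same total length. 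In all cases the number of cancelled paths is unchanged, so the new solution is still optimal, all jobs remain covered, but the index sum strictly increases by $(k+1) - i \geq 1$, contradicting extremality.

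For the end condition and the possession claim, start from a solution with $i_S \in A$ for every span, and treat each span separately. For each span $S = [\ell_i, \ell_k]$, set $s_j := \max(i, r'_j)$ for every $j \in \mathcal J_S$ and shrink $S$ to $[\ell_i, \ell_{k'}]$ with $k' := \max_{j \in \mathcal J_S}(s_j + p'_j - 1) \leq k$; this does not increase cost, so the result remains optimal. Letting $j^*$ attain the maximum, either $r'_{j^*} \leq i$, in which case $k' = i + p'_{j^*} - 1$ and the witnesses $r'_{j^*} \leq i \leq d'_{j^*} - p'_{j^*} + 1$ give $k' \in B_i$, or else $r'_{j^*} > i$, giving $k' = r'_{j^*} + p'_{j^*} - 1 \in C$. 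The possession assertion is then immediate because $s_j = \max(i, r'_j)$ is either $r'_j$ (giving start $\ell_{r'_j}$) or equal to the span start $i \in A$. I expect the main subtlety to be the rightward-shift step: one must combine the strict inequality $d'_j - p'_j + 1 \geq i + 1$ coming from $i \notin A$ with the separating-gap property of distinct spans to ensure that the cost is preserved (including when a merge is forced) and that the potential $\sum_{\ell_a \in U} a$ strictly increases.
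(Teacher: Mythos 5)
The paper does not actually prove this lemma: it is imported as Lemma~3.3 of \cite{KhandekarSST2015}, so there is no in-paper proof to compare against. Your blind proof is a correct, self-contained substitute, and it is essentially the standard shifting/exchange argument that the cited source itself uses: phase one pushes span starts into $A$ by unit right-shifts, made terminating via the potential $\sum_{\ell_a\in U}a$ maximised over optimal solutions (with the merge case correctly noted as cost-neutral), and phase two replaces each job's possession by the leftmost one fitting its span, $s_j=\max(i,r'_j)$, and trims the span end to $k'=\max_{j\in\mathcal{J}_S}(s_j+p'_j-1)$; the case split on the maximising job $j^*$ then lands $k'$ in $B_i$ (when $r'_{j^*}\leq i$, with witnesses $r'_{j^*}\leq i\leq d'_{j^*}-p'_{j^*}+1$) or in $C$ (when $r'_{j^*}>i$), and the possession claim falls out of $s_j\in\{r'_j\}\cup\{i\}$ with $i\in A$. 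The overall structure, including the observation that the two phases can be composed because trimming preserves span starts, is sound.

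Two small repairs are needed. First, your coverability criterion is stated as an equivalence but is only the pair of necessary conditions: with $r'_j=1$, $p'_j=5$, $d'_j$ large and $S=[\ell_{10},\ell_{12}]$, both $i\leq d'_j-p'_j+1$ and $r'_j+p'_j-1\leq k$ hold although $S$ is too short to contain any possession of $j$. The correct criterion (used in the paper in the proof of Lemma~\ref{lem:unimodularity_of_SCM}) is $\lvert[i,k]\cap[r'_j,d'_j]\rvert\geq p'_j$, i.e.\ $\max(i,r'_j)+p'_j-1\leq\min(k,d'_j)$. Your ``therefore still covers'' step in phase one formally leans on the false sufficiency direction, but the conclusion is easily verified directly: each possession $[\ell_s,\ell_{s+p'_j-1}]\subseteq[\ell_i,\ell_k]$ either already lies in $[\ell_{i+1},\ell_{k+1}]$, or has $s=i$ and can be shifted to start at $i+1\leq d'_j-p'_j+1$ (using $i\notin A$), ending at $i+p'_j\leq k+1$. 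Second, the shift to $[\ell_{i+1},\ell_{k+1}]$ presumes $k<m$; if $k=m$ the path $\ell_{k+1}$ does not exist. The patch is immediate: since every possession consists of existing paths, $d'_j\leq m$ for all $j\in\mathcal{J}_S$, so replacing $[\ell_i,\ell_m]$ by $[\ell_{i+1},\ell_m]$ still covers $\mathcal{J}_S$ by the same one-step possession shift and strictly decreases the number of cancelled paths, contradicting optimality; hence a span ending at $\ell_m$ with $i\notin A$ cannot occur in an optimal solution. With these two adjustments your argument is complete and matches the known proof technique for the cited result.
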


It follows from Lemma~\ref{lem:span_properties} that we can restrict our attention to solutions in
which the possession for job $j\in\mathcal J$ is chosen from the set
\[\mc{R}^e_j=\{[\ell_a,\ell_b] \in
  \mc{R}_j\,:\,a\in\{r'_j\}\cup\{d'_{j'}-p'_{j'}+1\,:\,j'\in\mathcal J\}\}.\]
\begin{lemma}[Lemma 3.4 in \cite{KhandekarSST2015}]\label{lem:unidir_recursion}
  The optimal value $c(\mc{P})$ can be computed using the following recursion. If $J(P)=\emptyset$, then $c(P)=0$. Otherwise,
\begin{equation}\label{eq:unidir_recursion}
  c(P)=\min_{X \in \mc{R}^e_j}\{\lvert X \cap P\rvert+c(R(X,P))+c(L(X,P))\}, 
\end{equation}
where $j\in J(P)$ is an arbitrary job of maximal length.
\end{lemma}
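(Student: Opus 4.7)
The plan is to derive the recursion by composing results already in hand. The base case $J(P)=\emptyset$ is trivial: the empty collection of possessions is vacuously feasible and covers no paths in $P$. For the recursive case, I would fix an arbitrary job $j\in J(P)$ of maximal length. By Lemma~\ref{lem:splitting_job} such a $j$ is splitting for $P$, so the third clause of Theorem~\ref{thm:recursion_1} (equivalently, Lemma~\ref{lem:recursion_splitting_job}) applies and yields
\[c(P)=\min_{X\in\mc{R}_j}\bigl\{\lvert X\cap P\rvert+c(R(X,P))+c(L(X,P))\bigr\}. \qquad (\star)\]

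Since $\mc{R}^e_j\subseteq\mc{R}_j$, the right-hand side of the claimed recursion is automatically at least $c(P)$; this is the easy direction. The substance of the lemma is the reverse inequality: the minimum in $(\star)$ is attained by some $X\in\mc{R}^e_j$. My plan here is to invoke Lemma~\ref{lem:span_properties} applied to the unidirectional instance obtained by restricting to the job set $J(P)$ and path set $P$. That lemma produces an optimal solution of the subproblem in which every job's possession starts either at its leftmost admissible path $\ell_{r'_{j'}}$ or at some $\ell_{i}$ with $i$ of the form $d'_{j''}-p'_{j''}+1$. Applied to the distinguished splitting job $j$, it delivers a possession $R_j\in\mc{R}^e_j$, and the splitting decomposition of Lemma~\ref{lem:recursion_splitting_job} guarantees that this $R_j$ attains the minimum in $(\star)$, completing the equality.

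The main obstacle I anticipate is checking that Lemma~\ref{lem:span_properties} transfers cleanly from the full problem to the subproblem on $P$. The subproblem's set of admissible starting shifts is built from the reduced job set $J(P)\subseteq\mathcal J$, so it is contained in the global set $\{r'_j\}\cup A$ that defines $\mc{R}^e_j$; no starting positions outside $\mc{R}^e_j$ are introduced by passing to the subproblem. Once this compatibility is noted, the proof is a direct composition of Lemmas~\ref{lem:splitting_job}, \ref{lem:recursion_splitting_job} and~\ref{lem:span_properties} together with Theorem~\ref{thm:recursion_1}, and requires no further computation beyond the exchange argument already encoded in Lemma~\ref{lem:span_properties}.
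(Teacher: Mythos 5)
Your overall architecture is reasonable, and the easy direction together with the chaining through Lemmas~\ref{lem:splitting_job} and~\ref{lem:recursion_splitting_job} (via Lemma~\ref{lem:recurrence}) is fine. But there is a genuine gap at the load-bearing step. Lemma~\ref{lem:span_properties} is stated for a \emph{standard} unidirectional instance, in which each job's possessions form the full sliding-window family $\{[\ell_{r'_j},\ell_{r'_j+p'_j-1}],\dots,[\ell_{d'_j-p'_j+1},\ell_{d'_j}]\}$ of intervals of fixed length $p'_j$, and the objective counts the full cardinality of each cancelled span. The subproblems on which you invoke it are not of this form: when $c(P)$ is evaluated for $P=[\ell_a,\ell_b]\subsetneq\mc{P}$, a job $j'\in J(P)$ may have \emph{every} possession overhanging an end of $P$, and the objective counts only $\lvert R\cap P\rvert$, so the effective possessions are truncated intervals of varying cardinality rather than windows of fixed length $p'_{j'}$. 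The definitions of $A$, $B_i$, $C$ and the normalizations~\eqref{eq:nondom} underlying Lemma~\ref{lem:span_properties} therefore do not transfer, and the shifting argument behind it behaves differently at the boundary: for a possession straddling the right end of $P$, shifting \emph{right} strictly decreases the truncated cost, while for one straddling the left end shifting \emph{left} decreases it, so the usual ``delay until blocked by a deadline'' exchange is no longer weakly improving. Your compatibility check --- that the subproblem's admissible start positions are contained in the global set $\{r'_j\}\cup A$ --- addresses a much easier point and does not license applying the lemma to the restricted instance.

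Note also that the paper itself contains no proof of this statement: it is imported verbatim as Lemma~3.4 of \cite{KhandekarSST2015}, where it is derived from their Lemma~3.3 (the counterpart of Lemma~\ref{lem:span_properties}) in their own setting, so there is nothing in the paper for your argument to match, and a self-contained derivation must do the work you are deferring. Concretely, you would need a truncated-boundary analogue of Lemma~\ref{lem:span_properties} for the subinstances $(J(P),P)$: for example, show that a possession $X$ of the maximal job $j$ straddling the left end of $P$ can be shifted all the way to start at $\ell_{r'_j}$ (here $L(X,P)=\emptyset$ and the monotonicity $c(Q)\leq c(Q')$ for $Q\subseteq Q'$ makes the shift weakly improving), and that an interior possession can be pushed right until blocked at start $d'_{j''}-p'_{j''}+1\in A$ for some $j''$ --- the latter requires the global span argument and is precisely where the substance lies. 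You also use, without stating it, the recurrence-type inequality that an optimal solution with $R_j=X$ has value at least $\lvert X\cap P\rvert+c(P\setminus X)$ when asserting that the delivered $X$ attains the minimum in $(\star)$; that step is standard (it is the argument of Lemma~\ref{lem:recurrence}) but should be made explicit.
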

\begin{lemma}\label{lem:interesting_spans}
  While computing $c(\mathcal P)$ using recursion~\eqref{eq:unidir_recursion}, if $c(P)$ is computed
  for some $P\neq\emptyset$ then $P=[\ell_a,\ell_b]$ with
  \begin{align*}
    a &\in T_s:=\{1\}\cup\left\{r'_j+p'_j\,:\,j\in\mathcal
        J\right\}\cup\left\{d'_{j'}-p'_{j'}+p'_j+1\,:\,j,j'\in\mathcal J\right\}, \\
    b &\in T_e:=\{m\}\cup\left\{d'_j-p'_j\,:\,j\in\mathcal
        J\right\}\cup\left\{r'_j-1\,:\,j\in\mathcal
        J\right\}
  \end{align*}
\end{lemma}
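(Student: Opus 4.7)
The plan is to prove the lemma by induction on the recursion depth, with the strengthened invariant that every set $P$ on which $c$ is invoked is an interval $[\ell_a,\ell_b]$ (possibly empty) and, whenever it is non-empty, satisfies $a\in T_s$ and $b\in T_e$. The base case is the initial call $c(\mathcal P)$, for which $P=[\ell_1,\ell_m]$; and $1\in T_s$, $m\in T_e$ hold by the explicit inclusion of $1$ and $m$ in the definitions of $T_s$ and $T_e$.

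For the inductive step, suppose the invariant holds for some non-empty $P=[\ell_a,\ell_b]$ on which the recursion~\eqref{eq:unidir_recursion} is applied, using a job $j\in J(P)$ of maximal length and a possession $X=[\ell_{a'},\ell_{a'+p'_j-1}]\in\mathcal R^e_j$. By the definition of $\mathcal R^e_j$, the left endpoint $a'$ lies in $\{r'_j\}\cup\{d'_{j'}-p'_{j'}+1\,:\,j'\in\mathcal J\}$. The two recursive calls are on
\[L(X,P)=[\ell_a,\ell_{a'-1}]\quad\text{and}\quad R(X,P)=[\ell_{a'+p'_j},\ell_b],\]
both of which are intervals; if either is empty, there is nothing to check. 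Otherwise, the left endpoint of $L(X,P)$ equals $a\in T_s$ by the inductive hypothesis, while its right endpoint $a'-1$ equals either $r'_j-1$ (if $a'=r'_j$) or $d'_{j'}-p'_{j'}$ (if $a'=d'_{j'}-p'_{j'}+1$), and in both cases lies in $T_e$. Symmetrically, the right endpoint of $R(X,P)$ is $b\in T_e$, while its left endpoint $a'+p'_j$ equals either $r'_j+p'_j$ or $d'_{j'}-p'_{j'}+p'_j+1$, both of which lie in $T_s$. This closes the induction.

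The argument is essentially a book-keeping exercise, so there is no serious obstacle; the only subtlety is to invoke the fact that the recursion uses $\mathcal R^e_j$ rather than the full $\mathcal R_j$, which is precisely what Lemma~\ref{lem:span_properties} justifies and which pins $a'$ down to the two forms used above. Once that restriction is in place, the closure of $T_s$ and $T_e$ under the index updates $a'\mapsto a'+p'_j$ (for $R$) and $a'\mapsto a'-1$ (for $L$) is immediate by construction of these two sets.
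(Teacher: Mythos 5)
Your proof is correct and follows essentially the same argument as the paper: an induction through the recursion maintaining the interval structure of $P$, where the two possible forms of the left endpoint $a'$ of a possession $X\in\mathcal R^e_j$ (namely $r'_j$ or $d'_{j'}-p'_{j'}+1$) yield exactly the memberships $a'-1\in T_e$ and $a'+p'_j\in T_s$. The only cosmetic difference is that the paper spells out an explicit four-case analysis of the position of $X$ relative to $P$, which you compress by observing that empty subintervals require no verification; this is harmless since $j\in J(P)$ guarantees $X\cap P\neq\emptyset$, so your formulas $L(X,P)=[\ell_a,\ell_{a'-1}]$ and $R(X,P)=[\ell_{a'+p'_j},\ell_b]$ are valid whenever those sets are non-empty.
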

\begin{proof}
  We proceed by induction on the cardinality of the set $P$. The base case
  $\mc{P} = [\ell_1, \ell_m]$ is trivial. Let $0<\lvert P\rvert<m$ and assume the statement is
  satisfied for all $P'$ with $\lvert P'\rvert>\lvert P\rvert$. We have $P = R(X,P')$ or
  $P = L(X,P')$ for some $P' \subseteq \mc{P}$ with $\lvert P'\rvert>\lvert P\rvert$ and some
  $X \in \mc{R}_j^e$ for a job $j \in J(P')$. By induction,
  $P'=[\ell_{\alpha}, \ell_{\beta}]$ with $\alpha\in T_s$ and $\beta\in T_e$. From $X \in
  \mc{R}_j^e$ it follows that $X = [\ell_i, \ell_k]$ with
  \begin{align*}
    i&\in \{r'_j\}\cup\{d'_{j'}-p'_{j'}+1\,:\,j'\in\mathcal J\}\},&
    k&\in \{r'_j+p'_j-1\}\cup\{d'_{j'}-p'_{j'}+p'_j\,:\,j'\in\mathcal J\}\}.
  \end{align*}
  \begin{description}
  \item[Case 1] If $i\leq \alpha\leq k<\beta$, then  $L(X,P') = \emptyset$ and $R(X,P') =[\ell_{k+1},\ell_\beta]$.
  \item[Case 2] If $i\leq \alpha\leq\beta\leq k$, then $L(X,P') = R(X,P') = \emptyset$.
  \item[Case 3] If $\alpha<i\leq k<\beta$, then $L(X,P') = [\ell_\alpha,\ell_{i-1}]$ and $R(X,P') =[\ell_{k+1},\ell_\beta]$.
  \item[Case 4] If $\alpha<i\leq \beta\leq k$, then $L(X,P') = [\ell_\alpha,\ell_{i-1}]$ and $R(X,P') =\emptyset$.
  \end{description}
  In every case, the claim follows from $k+1\in T_s$ and $i-1\in T_e$.
\end{proof}
\begin{theorem}\label{theorem:runtime}
  The unidirectional maintenance scheduling problem with $n$ jobs can be solved in time $O(n^4)$.
\end{theorem}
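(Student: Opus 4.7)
The plan is to turn the recursion of Lemma~\ref{lem:unidir_recursion} into a memoized dynamic program whose state space and per-state work can each be bounded explicitly. First, I would bound the number of distinct nontrivial subproblems encountered. By Lemma~\ref{lem:interesting_spans}, every non-empty $P$ whose value $c(P)$ is evaluated during the recursion is an interval $[\ell_a, \ell_b]$ with $a \in T_s$ and $b \in T_e$. Inspection of the defining sets shows that $T_e = \{m\} \cup \{d'_j - p'_j\,:\,j\in\mathcal J\} \cup \{r'_j - 1\,:\,j\in\mathcal J\}$ has cardinality $O(n)$, whereas $T_s$, thanks to the set $\{d'_{j'} - p'_{j'} + p'_j + 1 : j, j' \in \mathcal J\}$, has cardinality $O(n^2)$. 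Consequently the table has at most $|T_s| \cdot |T_e| = O(n^3)$ entries.

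Next, I would bound the work at each entry. At a subproblem $P = [\ell_a, \ell_b]$, one must (i) pick a job $j \in J(P)$ of maximal length, which can be done by a single scan of $\mathcal J$ with constant-time interval-intersection tests, costing $O(n)$; (ii) enumerate $X \in \mc{R}_j^e$, of which there are at most $1 + n = O(n)$ by the definition of $\mc{R}_j^e$; and (iii) for each such $X$ compute $|X \cap P|$, $R(X,P)$ and $L(X,P)$ in $O(1)$ and look up the two already-tabulated values $c(R(X,P))$ and $c(L(X,P))$, whose endpoints lie in $T_s \times T_e$ by Lemma~\ref{lem:interesting_spans}. The total cost per state is therefore $O(n)$.

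Finally, processing states in order of increasing length of the interval $P$ guarantees that every needed value is already tabulated when it is required, yielding a standard bottom-up dynamic program whose total runtime is $O(|T_s| \cdot |T_e|) \cdot O(n) = O(n^3) \cdot O(n) = O(n^4)$, matching the claim.

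The main obstacle is verifying that every recursive call actually produces an interval whose endpoints lie in $T_s \times T_e$, so that the table lookups in step (iii) always hit previously computed entries; this is exactly the content of Lemma~\ref{lem:interesting_spans}, whose four-case analysis shows that for each $X = [\ell_i, \ell_k] \in \mc{R}_j^e$ the new left endpoint $k+1$ belongs to $T_s$ and the new right endpoint $i-1$ belongs to $T_e$. Once this closure property is invoked, the arithmetic that yields the $O(n^4)$ bound is routine.
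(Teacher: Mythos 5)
Your proposal is correct and takes essentially the same approach as the paper: both invoke Lemma~\ref{lem:interesting_spans} to bound the number of subproblems by $\lvert T_s\rvert\,\lvert T_e\rvert=O(n^2)\cdot O(n)=O(n^3)$ and then charge $O(n)$ work per subproblem via $\lvert\mathcal{R}^e_j\rvert=O(n)$. The implementation details you add (bottom-up evaluation by interval length, constant-time interval-intersection tests, table lookups justified by the closure property of $T_s\times T_e$) simply make explicit what the paper's shorter proof leaves implicit.
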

\begin{proof}
  By Lemma~\ref{lem:interesting_spans}, the number of subproblems in the dynamic program for the
  recursion~\eqref{eq:unidir_recursion}, is $O(\lvert T_s\rvert\lvert T_e\rvert)=O(n^3)$. In each
  subproblem, after picking the splitting job $j$, we have $\lvert \mathcal R^e_j\rvert=O(n)$
  choices for the possession $X$, which implies a runtime bound of $O(n^4)$ (see \cite[Section 15.3]{CormenSRL2009}).  
\end{proof}
In Section~\ref{subsec:ordered_jobs} we will present another algorithm with a runtime of $O(n^2)$ under
an additional assumption on the instance.

\subsection{The integer programming model}\label{subsec:IP}
For the unidirectional maintenance scheduling problem, the graph $G$ whose adjacency relation is
specified in Definition~\ref{def:adjacent} is a path, and the set covering model can be simplified
as follows. The set $F_0$ of spans is $\{[\ell_i,\ell_k]\,:\,1\leq i\leq k\leq m\}$, where
$[\ell_i,\ell_k]$ denotes the set $\{\ell_i,\ell_{i+1},\dots,\ell_k\}$. Let $x_{ik}$ be a binary
variable indicating that the paths in this span are cancelled. For $j\in\mathcal J$, let
$B_j=\{(i,k)\,:\,\exists R\in\mathcal R_j\,R\subseteq[\ell_i,\ell_k]\}$. Then the unidirectional set
cover model (uniSCM) is
\begin{align}
  \text{[uniSCM] }\text{Minimise} \sum_{1\leq i\leq k\leq m}(k-i+1)&x_{ik}\quad  \text{subject to}\nonumber\\
  \sum_{(i,k)\in B_j}x_{ik} &\geq 1&&\text{for all } j\in \mc{J},   \label{eq:uniSCM_covering}\\
  x_{ik} &\in\{0,1\} && \text{for all } (i,k)\text{ with }1\leq i\leq k\leq m.\label{eq:uniSCM_binaries}
\end{align}
This is a binary program with $\binom{m+1}{2}$ variables and $n$ constraints. 

For the case that all jobs have the same length the problem can be solved efficiently by linear programming. This is a
consequence of the following lemma.
\begin{lemma}\label{lem:unimodularity_of_SCM}
  For an instance of the unidirectional maintenance scheduling problem with $p'_j=p'$ for all
  $j \in \mc{J}$, the LP relaxation of the problem [uniSCM] is integral.
\end{lemma}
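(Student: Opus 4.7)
My plan is to show that the constraint matrix $A$ of [uniSCM] is totally unimodular (TU), from which integrality of the LP polytope follows immediately. The route is to reorder the jobs so that $A$ becomes an \emph{interval matrix} (each column having its $1$s in consecutive rows), and then invoke the classical fact that such matrices are TU.

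First I would re-index the jobs using the non-dominance condition~\eqref{eq:nondom_equal}. Under the hypothesis $p'_j = p'$ for all $j$, that condition says that every pair of distinct jobs has intervals $[r'_j, d'_j]$ strictly nested in the same direction. Hence the $n$ jobs can be numbered $1, 2, \dots, n$ with $r'_1 < r'_2 < \cdots < r'_n$ and simultaneously $d'_1 < d'_2 < \cdots < d'_n$.

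Next I would rewrite $B_j$ in a convenient form. A span $[\ell_i, \ell_k]$ belongs to $B_j$ iff it contains some possession $[\ell_a, \ell_{a+p'-1}]$ of $j$, equivalently iff the system $\max(i, r'_j) \le a \le \min(k-p'+1,\ d'_j - p' + 1)$ is feasible. This decomposes into three independent inequalities:
\begin{equation*}
    k - i + 1 \ge p', \qquad i \le d'_j - p' + 1, \qquad k \ge r'_j + p' - 1.
\end{equation*}
Now fix a column $(i,k)$ of $A$. If $k - i + 1 < p'$, the column is identically zero, which trivially has its (empty set of) $1$s consecutive. Otherwise the rows with entry $1$ are exactly
\[ \{j : d'_j \ge i + p' - 1\} \cap \{j : r'_j \le k - p' + 1\}, \]
which under the chosen ordering is the intersection of a suffix with a prefix of $\{1, \dots, n\}$, hence a (possibly empty) contiguous block of indices. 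Thus every column of $A$ has the consecutive-$1$s property.

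Finally, a $0/1$-matrix whose columns all have consecutive $1$s is totally unimodular. Since the LP relaxation of [uniSCM] has the form $\min\{c^\top x : Ax \ge \mathbf{1},\ 0 \le x \le \mathbf{1}\}$ with $A$ TU and integer right-hand sides, all its extreme points are $0/1$-valued, which is exactly the integrality claim. The only subtlety I expect to spend care on is checking that the length condition $k - i + 1 \ge p'$ does not spoil the consecutive-$1$s structure; fortunately it merely kills some all-zero columns (those whose spans are too short to contain any possession) and so is harmless. The main conceptual step is thus noticing that~\eqref{eq:nondom_equal} gives a single ordering witnessing the interval-matrix property; everything else is routine polyhedral theory.
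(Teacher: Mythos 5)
Your proof is correct and follows essentially the same route as the paper's: reorder the jobs via~\eqref{eq:nondom_equal} so that both the $r'_j$ and the $d'_j$ are increasing, show the constraint matrix of [uniSCM] has consecutive $1$s in each column (an interval matrix), and conclude total unimodularity and hence integrality. Your prefix-intersect-suffix description of the rows hit by a column $(i,k)$ is merely a streamlined rendering of the paper's three-row check $j_1<j_2<j_3$ (which, incidentally, contains a sign typo in~\eqref{eq:unimodularity_of_SCM_2}, where $k \leq r'_{j_2}+p'-1$ should read $k \geq r'_{j_2}+p'-1$), so the two arguments are substantively identical.
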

\begin{proof}
  By~\eqref{eq:nondom_equal}, we can assume without loss of generality, $r'_1<r'_2<\dotsb<r'_{\lvert\mc{J}\rvert}$ and
  $d'_1<d'_2<\dotsb<d'_{\lvert\mc{J}\rvert}$.  Let $A$ be the constraint matrix
  for~\eqref{eq:uniSCM_covering}. We will verify that $A$ is an interval matrix, that is, in every
  column the 1's appear consecutively. Then $A$ is totally unimodular (see, e.g.,~\cite[Corollary
  III.2.10]{LaurenceA.Wolsey1999}, and the result follows (see, e.g., \cite[Proposition
  III.2.2]{LaurenceA.Wolsey1999}). We denote the value of row $j \in \mc{J}$ and column $(i,k)$ in
  matrix $A$ by $a_{jik}$:
  \[a_{jik}=
    \begin{cases}
      1 &\text{if }\left\lvert[i,k]\cap [r'_j,d'_j]\right\rvert \geq p'\text{ (that is, if $(i,k) \in B_j$)},\\
      0 &\text{otherwise.}
    \end{cases}
  \]
  Consider three row indices
  $j_1 < j_2 < j_3 \in \mc{J}$ with $a_{j_1ik}=a_{j_3ik}=1$.  We need to check that this implies
  $a_{j_2ik}=1$. From $a_{j_1ik}=1$, we have $\lvert [i,k] \cap [r'_{j_1},d'_{j_1}]\rvert\geq p'$,
  hence $i\leq d'_{j_1}-p'+1$ and, using $d'_{j_1} < d'_{j_2}$, we obtain 
\begin{equation}\label{eq:unimodularity_of_SCM_1}
  i \leq d'_{j_2} - p' +1. 
\end{equation}
From $a_{j_3ik}=1$, we have $\lvert [i,k] \cap [r'_{j_3},d'_{j_3}]\rvert\geq p'$,
  hence $k\geq r'_{j_3}+p'-1$ and, using $r'_{j_2} < r'_{j_3}$, we obtain 
\begin{equation}\label{eq:unimodularity_of_SCM_2}
  k \leq r'_{j_2} + p' -1. 
\end{equation}
From~(\ref{eq:unimodularity_of_SCM_1}) and~(\ref{eq:unimodularity_of_SCM_2}), it follows that
$\lvert [i,k] \cap [r'_{j_2},d'_{j_2}]\rvert\geq p'$, and therefore $a_{j_2ik}=1$, as required. 
\end{proof}	
The number of variables in both [PIM] and [uniSCM] can be reduced by restricting the sets of paths
that can be the first or last path of a span or possession. 
As a consequence, we can restrict [uniSCM] to the variables $x_{ik}$ with $i\in A$ and $k\in B_i\cup
C$, and [PIM] to the variables $X_R$ where $R\in\mathcal R_j$ has the form $[\ell_i,\ell_k]$ with
$i\in A\cup\{\ell_{e'_j}\}$. This leads to versions of [uniSCM] and [PIM] with $O(n^2)$ variables
and $n$ constraints, and $O(n^2+m)$ variables and $O(n^2m)$ constraints, respectively.
We conclude this subsection by comparing the strength of the LP relaxations for [uniSCM] and [PIM].
\begin{proposition}\label{prop:comparison_SCM_PIM}
  For the unidirectional maintenance problem, the LP relaxation of the model [uniSCM] is at least
  as strong as the LP relaxation of the model [PIM].
\end{proposition}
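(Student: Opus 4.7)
The plan is to prove $\mathrm{val}(\mathrm{LP}(\text{[PIM]}))\leq\mathrm{val}(\mathrm{LP}(\text{[uniSCM]}))$ by converting any feasible LP-[uniSCM] solution $x^{\ast}$ into a feasible LP-[PIM] solution $(\tilde x,\tilde y)$ with objective value no larger than the [uniSCM] value of $x^{\ast}$. Since the [PIM] objective depends only on the $y$-variables, this immediately yields the desired inequality between the two LP optima.

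Given $x^{\ast}$, I would first set
\[
\tilde y_{\ell}:=\min\Bigl\{1,\ \sum_{(i,k):\,\ell\in[\ell_i,\ell_k]}x^{\ast}_{ik}\Bigr\}\qquad (\ell\in\mc P),
\]
for which interchanging sums gives $\sum_{\ell}\tilde y_{\ell}\leq \sum_{(i,k)}(k-i+1)\,x^{\ast}_{ik}$, matching the target objective bound. To define the possession variables, for every job $j\in\mc J$ and every span $(i,k)\in B_j$ I would fix once and for all a possession $R_j(i,k)\in\mc R_j$ with $R_j(i,k)\subseteq[\ell_i,\ell_k]$; such a possession exists by the definition of $B_j$. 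I would then put
\[
\tilde x_R:=\min\Bigl\{1,\ \max_{j':\,R\in\mc R_{j'}}\ \sum_{(i,k)\in B_{j'}:\,R_{j'}(i,k)=R}x^{\ast}_{ik}\Bigr\}\qquad (R\in\mc R).
\]

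To verify \eqref{eq:PIM_1} for a given $j\in\mc J$, I would argue in two cases: either some $\tilde x_R$ with $R\in\mc R_j$ is already capped at $1$, in which case the covering constraint is immediate, or no capping occurs in $\mc R_j$ and I can restrict the outer maximum to $j'=j$, use that the map $(i,k)\mapsto R_j(i,k)$ sends each $(i,k)\in B_j$ to a unique $R\in\mc R_j$, and conclude
\[
\sum_{R\in\mc R_j}\tilde x_R\geq \sum_{R\in\mc R_j}\sum_{(i,k)\in B_j:\,R_j(i,k)=R}x^{\ast}_{ik}=\sum_{(i,k)\in B_j}x^{\ast}_{ik}\geq 1.
\]
For \eqref{eq:PIM_2}, I would observe that whenever $R_{j'}(i,k)=R$ and $\ell\in R$ we have $\ell\in R\subseteq[\ell_i,\ell_k]$, so the index set used in the sum defining $\tilde x_R$ is a subset of the one used for $\tilde y_{\ell}$; hence $\tilde x_R\leq \tilde y_{\ell}$ survives both the outer maximum and the cap at $1$.

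The main obstacle is to keep \eqref{eq:PIM_1} and \eqref{eq:PIM_2} simultaneously satisfied. A single span variable $x^{\ast}_{ik}$ may be needed to cover many different jobs, and naively summing its weight over one chosen possession per covered job would double count and inflate $\tilde x_R$ well beyond $\tilde y_{\ell}$, breaking \eqref{eq:PIM_2}. Using a maximum over jobs when defining $\tilde x_R$ removes this double counting, while \eqref{eq:PIM_1} stays intact because for each individual job the relevant sum is already a lower bound on that maximum.
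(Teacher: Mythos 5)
Your proposal is correct and follows essentially the same route as the paper's proof: both transfer an optimal fractional [uniSCM] solution to a feasible point of the LP relaxation of [PIM] using the same $y$-variables (with $y_{\ell}$ the total weight of spans containing $\ell$, so that the objective values match), and both ultimately reduce constraint~\eqref{eq:PIM_1} to the covering constraint~\eqref{eq:uniSCM_covering}. The only difference is bookkeeping in the definition of the possession variables --- the paper sets $x_R=\min\{y_{\ell}\,:\,\ell\in R\}$, which makes \eqref{eq:PIM_2} trivial and verifies \eqref{eq:PIM_1} via a minimizing path index $r(R)$, whereas you route each span's weight to a fixed representative possession and take a maximum over jobs, which makes \eqref{eq:PIM_1} follow from the partition of $B_j$ and leaves \eqref{eq:PIM_2} to a containment check; this is a cosmetic swap of which constraint needs work (and your explicit cap at $1$ is in fact slightly more careful about the variable upper bounds than the paper's construction).
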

\begin{proof}
  Let $z_1$ and $z_2$ be the optimal values of the LP relaxations of [uniSCM] and [PIM],
  respectively. We have to show that $z_1 \geq z_2$. Let $x=(x_{ik})$ be an optimal solution for the
  LP relaxation of [uniSCM]. We define a solution for LP relaxation of [PIM] by
  \begin{align}
    y_{\ell_r} &= \sum_{(i,k)\,:\,i\leq r\leq k}x_{ik} \text{ for all }r\in[1,m],&
    x_R &= \min\{y_{\ell}\,:\,\ell\in R\}  \text{ for all }R\in\mathcal R.\label{eq:PIM_sol}
  \end{align}
Constraint~\eqref{eq:PIM_2} is satisfied by definition. For constraint~\eqref{eq:PIM_1}, fix
$j\in\mathcal J$, and for every $R \in \mc{R}_j$ an index $r(R)$ with $\ell_{r(R)}\in R$ and $x_R= y_{\ell_{r(R)}}$. Then
\begin{multline*}
  \sum_{R \in \mc{R}_j}x_R = \sum_{R \in \mc{R}_j}y_{\ell_{r(R)}} = \sum_{R \in
    \mc{R}_j}\sum_{(i,k)\,:\,i\leq r(R)\leq k}x_{ik} \geq \sum_{R \in \mc{R}_j}\sum_{(i,k)\,:\,R
    \subseteq [\ell_i,\ell_k]}x_{ik}\\
  =\sum_{1\leq i\leq k\leq m}\sum_{R \in \mc{R}_j: R \subseteq [\ell_i,\ell_k]}x_{ik}\geq
  \sum_{(i,k) \in \mc{F}: \exists R \in \mc{R}_j, R \subseteq [\ell_i, \ell_k]}x_{ik} \geq 1.
\end{multline*}
So~\eqref{eq:PIM_sol} defines a feasible solution for the LP relaxation of [PIM], and we conclude
the proof as follows:
\[ z_2\leq\sum_{r \in [1,m]}y_{\ell_r} = \sum_{r \in [1,m]}\sum_{(i,k)\,:\,i\leq r\leq k}x_{ik}  
                               = \sum_{1\leq i\leq k\leq m}\sum_{r \in [1,m]\,:\, i\leq r\leq
                                 k}x_{ik}=\sum_{1\leq i\leq k\leq m}(k-i+1)x_{ik}=z_1.\qedhere\]
\end{proof}
We provide an example in which [uniSCM] is strictly stronger than [PIM].
\begin{example}
  Consider an instance of the problem with three jobs as follows:
  \begin{align*}
    \mc{R}_1&=\left\{[\ell_2,\ell_3],[\ell_3,\ell_4]\right\}, &
                                                                \mc{R}_2&=\left\{[\ell_3,\ell_5],[\ell_4,\ell_6],[\ell_5,\ell_7]\right\}, &
                                                                \mc{R}_3&=\left\{[\ell_6,\ell_7],[\ell_7,\ell_8]\right\}.
  \end{align*}
  An optimal solution of the LP-relaxation of [PIM] is
  \begin{align*}
    (y_{\ell_2},y_{\ell_3},y_{\ell_4},y_{\ell_5},y_{\ell_6},y_{\ell_7},y_{\ell_8})&=(1/2,\,1/2,\,1/2,\,1/3,\,1/2,\,1/2,\,1/2), \\
    x_{[\ell_2,\ell_3]}=x_{[\ell_3,\ell_4]}=x_{[\ell_6,\ell_7]}=x_{[\ell_7,\ell_8]} &=1/2, \\
    x_{[\ell_3,\ell_5]}=x_{[\ell_4,\ell_6]}=x_{[\ell_5,\ell_7]}&=1/3,
  \end{align*} 
  with value $ 10/3 $, and for [uniSCM], it is
  \[ x_{[\ell_3,\ell_4]}=x_{[\ell_5,\ell_7]}=1\]
  with value 5 which is the optimal solution of the instance.
\end{example}
In Section~\ref{sec:computation}, we provide computational evidence consistent with the
theoretical result of Proposition~\ref{prop:comparison_SCM_PIM}, but shows that the performance of
[uniSCM] can be much better. 

\subsection{A special case}\label{subsec:ordered_jobs}
The runtime bound of $O(n^4)$ for the algorithm described in
Section~\ref{subsec:unidir_interval_poly_time_algorithm} can be improved to $O(n^2)$ when the
deadlines are ordered the same way as the release times, that is,
\begin{align}
  &r'_1 \leq r'_2 \leq \cdots \leq r'_n,  &&  d'_1 \leq d'_2 \leq \cdots \leq d'_n. \label{eq:ordering} 
\end{align} 
Under this assumption the unidirectional maintenance scheduling problem can be reduced to a
shortest path problem as follows. We define a directed graph $D=(V,A)$ with node set $V=\{0,1,2,\dots,n\}$, and
arc set $A=\{(v,w)\,:\,0\leq v<w\leq n\}$. The weight of arc $(v,w)$ is defined by
$\wt(vw)=\beta(v,w)-\alpha(v)+1$, where
\begin{align}
  \alpha(v) &=\min\left\{d'_j-p_j+1\,:\,v+1\leq j\leq n\in[n]\right\} && 0\leq v\leq n-1,\label{eq:alpha}\\
  \beta(v,w) &=\max\left\{\max\{r'_j,\alpha(v)\}+p_j-1\,:\,v+1\leq j\leq w\right\} &&0\leq v<w\leq n.\label{eq:beta}                                                                        
\end{align}
We will show that a shortest path from $0$ to $n$ in the digraph $D$ corresponds to an optimal
solution of the maintenance scheduling problem. The basic idea is that using arc $(v,w)$ in the path
from $0$ to $n$ corresponds to cancelling the paths with indices $\alpha(v),\dots,\beta(v,w)$, which
allows the scheduling of the jobs $v+1,v+2,\dots,w$. We start by showing that in this way every path
corresponds to a feasible schedule, and consequently the minimum length of a path is an upper bound
on the objective value for the maintenance scheduling problem.
\begin{lemma}\label{lem:upper_bound}
  Let $P=(0=v_0,v_1,v_2,\dots,v_t=n)$ be a path in $D$, and set
  \begin{align*}
    x_{ik}&=
    \begin{cases}
      1 & \text{if }(i,k)=(\alpha(v_s),\beta(v_s,v_{s+1}))\text{ for some }s\in\{0,1,\dots,t-1\}\\
      0 & \text{otherwise}.
    \end{cases} &&\text{for all }[\ell_i,\ell_k]\in F_0.
  \end{align*}
  Then $(x_{ik})_{1\leq i\leq k\leq m}$ satisfies~\eqref{eq:uniSCM_covering}, and
  $\displaystyle\sum_{1\leq i\leq k\leq m}(k-i+1)x_{ik}=\wt(P)$.
\end{lemma}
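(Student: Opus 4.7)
The plan is to verify the two claims separately: (i) the vector $(x_{ik})$ satisfies~\eqref{eq:uniSCM_covering}, and (ii) $\sum_{1\leq i\leq k\leq m}(k-i+1)x_{ik}=\wt(P)$. The whole argument is a direct unpacking of the definitions of $\alpha$ and $\beta$.

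For the covering step, I would fix an arbitrary job $j\in\mc{J}$ and identify the unique $s\in\{0,1,\dots,t-1\}$ with $v_s<j\leq v_{s+1}$; such an $s$ exists because $v_0=0$, $v_t=n$, and $v_0<v_1<\dots<v_t$. Setting $i_s=\alpha(v_s)$ and $k_s=\beta(v_s,v_{s+1})$, the definitions~\eqref{eq:alpha} and~\eqref{eq:beta} immediately give
\[ i_s\leq d'_j-p'_j+1 \qquad\text{and}\qquad k_s\geq\max\{r'_j,i_s\}+p'_j-1, \]
because $j$ lies in each of the index ranges over which $\alpha(v_s)$ is minimised and $\beta(v_s,v_{s+1})$ is maximised. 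To conclude $(i_s,k_s)\in B_j$ I verify $\lvert[i_s,k_s]\cap[r'_j,d'_j]\rvert\geq p'_j$ by a short case split. If $i_s\geq r'_j$, then $[i_s,i_s+p'_j-1]$ is contained in both $[r'_j,d'_j]$ (using $i_s+p'_j-1\leq d'_j$) and $[i_s,k_s]$ (using $k_s\geq i_s+p'_j-1$). If $i_s<r'_j$, then $[r'_j,r'_j+p'_j-1]$ is contained in both $[r'_j,d'_j]$ (using job-feasibility $r'_j+p'_j-1\leq d'_j$) and $[i_s,k_s]$ (using $k_s\geq r'_j+p'_j-1$). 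Either way $x_{i_s,k_s}=1$ and~\eqref{eq:uniSCM_covering} holds for $j$.

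For the weight identity, every arc $(v_s,v_{s+1})$ of $P$ contributes exactly $\wt(v_s v_{s+1})=k_s-i_s+1$ to $\wt(P)$, and turning on $x_{i_s,k_s}$ contributes the same amount to $\sum(k-i+1)x_{ik}$. The two sums therefore agree provided distinct arcs produce distinct pairs $(i_s,k_s)$; this follows from the fact that $\alpha(v)$ is non-decreasing in $v$ together with the strict orderings $r'_1<\dots<r'_n$ and $d'_1<\dots<d'_n$ implied by~\eqref{eq:nondom_equal}, which force $\beta(v_s,v_{s+1})$ to strictly increase along $P$ whenever $\alpha$ remains constant.

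I expect the main technical step to be the feasibility case analysis, since one must combine the one-sided bound $i_s\leq d'_j-p'_j+1$ from the \emph{minimum} defining $\alpha$ with the one-sided bound $k_s\geq\max\{r'_j,i_s\}+p'_j-1$ from the \emph{maximum} defining $\beta$, and compare these against $r'_j$ and $d'_j$ consistently. Once these two bounds are in hand, the covering check and the weight equality are both short calculations.
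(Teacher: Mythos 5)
Your covering argument is correct and is essentially the paper's own: the paper merges your two cases by setting $a=\max\{i,r'_j\}$ and $b=a+p_j-1$ and verifying $[a,b]\subseteq[i,k]\cap[r'_j,d'_j]$, using exactly the two one-sided bounds you extract from \eqref{eq:alpha} and \eqref{eq:beta} together with feasibility $r'_j+p_j-1\leq d'_j$. So the first half of your proposal matches the paper.

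The gap is in the weight identity. You correctly identify that $\sum_{1\leq i\leq k\leq m}(k-i+1)x_{ik}=\wt(P)$ requires distinct arcs of $P$ to produce distinct pairs $(\alpha(v_s),\beta(v_s,v_{s+1}))$, but your justification of this injectivity fails on both counts. First, \eqref{eq:nondom_equal} is conditional on $p'_{j'}=p'_j$, so it does not give the strict orderings $r'_1<\dots<r'_n$ and $d'_1<\dots<d'_n$; combining \eqref{eq:ordering} with \eqref{eq:nondom} only yields, for each pair $j<j'$, that at least one of $r'_j<r'_{j'}$, $d'_j<d'_{j'}$ is strict. Second, and more seriously, even when both orderings are fully strict, $\beta$ need not strictly increase while $\alpha$ stays constant: the terms in \eqref{eq:beta} are $\max\{r'_j,\alpha(v_s)\}+p_j-1$, and the maximum can be attained with $\max\{r'_j,\alpha(v_s)\}=\alpha(v_s)$, which erases the strict growth of $r'_j$. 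Concretely, take $n=3$, $m=22$, and $(r'_j,d'_j,p_j)$ equal to $(1,20,2)$, $(2,21,2)$, $(3,22,20)$. This instance satisfies \eqref{eq:ordering}, \eqref{eq:nondom} and \eqref{eq:nondom_equal}, and both orderings are strict; yet $\alpha(0)=\alpha(1)=3$ and $\beta(0,1)=\beta(1,2)=4$, so on the path $P=(0,1,2,3)$ the arcs $(0,1)$ and $(1,2)$ induce the same pair $(3,4)$, giving $\sum(k-i+1)x_{ik}=22$ while $\wt(P)=24$. Thus the mechanism you invoke is false, and indeed the stated equality itself can fail for such degenerate (non-shortest) paths. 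It is worth noting that the paper's proof silently commits the same step, rewriting the sum over the \emph{set} of pairs as a sum over $s\in\{0,\dots,t-1\}$ without checking distinctness. The harmless repair is to observe that in general one always has $\sum_{1\leq i\leq k\leq m}(k-i+1)x_{ik}\leq\wt(P)$, and this inequality is all that is used: together with feasibility it gives the upper-bound half of Theorem~\ref{thm:quadratic_algorithm}, the lower-bound half being Lemma~\ref{lem:lower_bound}. (Alternatively, equality can be restored by shortcutting: if consecutive arcs $(v_s,v_{s+1})$ and $(v_{s+1},v_{s+2})$ induce the same pair, the single arc $(v_s,v_{s+2})$ induces that pair as well and strictly decreases the path weight, so one may restrict attention to paths on which the map $s\mapsto(\alpha(v_s),\beta(v_s,v_{s+1}))$ is injective.) Your proposal, as written, asserts a false auxiliary claim rather than taking either of these routes.
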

\begin{proof}
  Let $j\in[m]$ be an arbitrary job, let $s\in\{0,1,\dots,t-1\}$ be the unique index with
  $v_s<j\leq v_{s+1}$, and set $i=\alpha(v_s)$, $k=\beta(v_s,v_{s+1})$. Then $x_{ik}=1$ by
  assumption, and in order to prove that~\eqref{eq:uniSCM_covering} is satisfied, we verify
  $(i,k)\in B_j$. For this purpose set $a=\max\{i,r'_j\}$, $b=a+p_j-1$. Then
  $\lvert[a,b]\rvert=b-a+1=p_j$, and we claim that
  $[a,b]\subseteq [i,k]\cap[r'_j,d'_j]=[\max\{i,r'_j\},\,\min\{k,d'_j\}]$, which then implies
  $(i,k)\in B_j$. To see the claim, we first note that $a\geq\max\{i,r'_j\}$ by definition. Then
  $b\leq k$ follows from~\eqref{eq:beta} for $(v,w)=(v_s,v_{s+1})$, together with
  $v_s+1\leq j\leq v_{s+1}$:
  \[ k\geq\max\{r'_j,i\}+p_j-1=a+p_j-1=b.\]
  For $b\leq d'_j$, we note that $r'_j+p_j-1\leq d'_j$, since we assume that the instance is
  feasible, and $i+p_j-1\leq d'_j$ follows from~\eqref{eq:alpha} for $v=v_s$, together with
  $v_s+1\leq j\leq n$: $i=\alpha(v_s)\leq d'_j-p_j+1$.

  We have proved that $(x_{ik})$ satisfies~\eqref{eq:uniSCM_covering}, and it remains to evaluate the
  objective function:
  \begin{multline*}
    \sum_{1\leq i\leq k\leq m}(k-i+1)x_{ik}=\sum_{(i,k)\in
      \{(\alpha(v_s),\beta(v_s,v_{s+1}))\,:\,s=0,1,\dots,t-1\}}(k-i+1)\\
    =\sum_{s=0}^{t-1}\left(\beta(v_s,v_{s+1})-\alpha(v(s))+1\right)=\sum_{s=0}^{t-1}\wt(v_s,v_{s+1})=\wt(P).\qedhere
  \end{multline*}
\end{proof}

In order to show that a shortest path corresponds to an optimal schedule, let
$(x_{ik})_{1\leq i\leq k\leq m}$ be an optimal solution with minimum support, that is $\sum_{(i,k)}x_{ik}$
is minimal among all optimal solutions. Let $S=\{(i,k)\,:\,x_{ik}=1\}$ be the support of this
solution. The minimality assumption implies that we can label the elements of $S$ as
$(i_1,k_1)$,\dots,$(i_t,k_t)$, such that $i_{s+1}\geq k_s+2$ for all $s\in\{1,2,\dots,t-1\}$. We
define a function $f:\{1,\dots,t\}\to\{0,\dots,n\}$, where $f(s)=0$ if job $1$ cannot be scheduled into the first $s$
spans, and otherwise $f(s)$ is the maximal job index $j$ such that all jobs up to $j$ can be
scheduled into the first $s$ spans. More formally,
\[f(s) = \max\{j\in[n]\,:\,\forall j'\in\{1,\dots,j\}\,\exists s'\in\{1,\dots,s\}\
  \left\lvert[i_{s'},k_{s'}]\cap[r'_{j'},d'_{j'}]\right\rvert\geq p_{j'}\},\]
where we define the maximum of the empty set to be zero. Since $(x_{ik})$ is a feasible solution, we
have $f(t)=n$, and the the definition of $f$ immediately implies monotonicity:
\[0\leq f(1)\leq f(2)\leq\dots\leq f(t)=n.\]
In the next lemma we use the minimality assumption on the solution $(x_{ik})$ to establish strict monotonicity, that is, each of the inequalities in the
chain above is strict. The intuition behind the formal proof provided below is as follows. If
$f(s)=f(s-1)=j^*$ then jobs $1$ to $j^*$ can be scheduled in spans $1$ to $s-1$, but job $j^*+1$
can't be scheduled in any of the first $s$ spans. As $(x_{ik})$ is a feasible solution, job $j^*+1$
can be scheduled into a span $[i_{s^*},k_{s^*}]$ for some $s^*\geq s+1$. Then the
assumption~\eqref{eq:ordering} can be used to
show that the spans $s$ to $s^*-1$ can be omitted which contradicts the assumption that $(x_{ik})$
is an optimal solution.
\begin{lemma}\label{lem:strong_monotonicity}
  $0<f(1)<f(2)<\dots<f(t)=n$.
\end{lemma}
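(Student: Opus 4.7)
The plan is a contradiction argument as sketched informally before the lemma. Monotonicity already gives $0 \leq f(1) \leq \dots \leq f(t) = n$ (setting $f(0) = 0$ by convention), so strict monotonicity fails only if $f(s-1) = f(s) = j^*$ for some $s \in \{1, \dots, t\}$. I would assume this equality and exhibit a feasible solution with strictly smaller weight than $(x_{ik})$, contradicting optimality.

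The concrete exchange is to delete from the support $S$ all spans with index in $\{s, s+1, \dots, s^*-1\}$, where $s^* \geq s+1$ is the smallest index such that span $(i_{s^*}, k_{s^*})$ covers job $j^*+1$. Such an $s^*$ exists because $(x_{ik})$ is feasible and, by $f(s) = j^*$, no span in $\{1, \dots, s\}$ covers $j^*+1$. Since every span contributes weight at least $1$, this deletion strictly decreases the objective, so it remains only to verify feasibility. For any job $j$ originally covered by some $s' \in \{s, \dots, s^*-1\}$ there are three subcases: if $j \leq j^*$, then $f(s-1) = j^*$ guarantees a covering span in $\{1, \dots, s-1\}$; if $j = j^*+1$, the case is vacuous by the minimality of $s^*$; if $j \geq j^*+2$, one has to show that span $s^*$ itself covers $j$.

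The main obstacle is this last subcase. Here I would use the ordering assumption \eqref{eq:ordering} to obtain $r'_j \geq r'_{j^*+1}$ and $d'_j \geq d'_{j^*+1}$, and combine these with $k_{s'} < i_{s^*}$, the failure of $s'$ to cover $j^*+1$, and the coverage of $j^*+1$ by $s^*$. A short case analysis on the position of $[i_{s'}, k_{s'}]$ relative to $[r'_{j^*+1}, d'_{j^*+1}]$ rules out every configuration except $i_{s'} \leq r'_{j^*+1} \leq k_{s'} \leq d'_{j^*+1}$ with $k_{s'} < r'_{j^*+1} + p_{j^*+1} - 1$: the alternatives either contradict the feasibility of $j^*+1$ (the interval $[r'_{j^*+1}, d'_{j^*+1}]$ has length at least $p_{j^*+1}$) or force $i_{s^*} > d'_{j^*+1}$, so that $s^*$ cannot cover $j^*+1$. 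In the surviving configuration the coverage of $j$ by $s'$ reduces to $k_{s'} \geq r'_j + p_j - 1$, which combined with $k_{s'} < r'_{j^*+1} + p_{j^*+1} - 1$ and $r'_j \geq r'_{j^*+1}$ yields $p_j \leq p_{j^*+1}$. From the coverage of $j^*+1$ by $s^*$ one has $\min(k_{s^*}, d'_{j^*+1}) - i_{s^*} + 1 \geq p_{j^*+1}$ together with $r'_j < i_{s^*}$, so using $d'_j \geq d'_{j^*+1}$ I would conclude $\min(k_{s^*}, d'_j) - i_{s^*} + 1 \geq p_{j^*+1} \geq p_j$, i.e.\ $s^*$ covers $j$. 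This finishes the feasibility check and produces the contradiction.
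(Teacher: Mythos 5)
Your proposal is in substance the paper's own exchange argument: assume $f(s-1)=f(s)=j^*$, take the smallest $s^*>s$ whose span covers job $j^*+1$, delete the now-redundant span(s), and resolve the only nontrivial case by showing that any job $j\geq j^*+2$ covered by a deleted span $s'$ satisfies $p_j\leq p_{j^*+1}$ and is therefore also covered by span $s^*$. Your final computation is exactly the paper's \eqref{eq:condition_1}--\eqref{eq:condition_2} (and is in fact slightly more careful than the paper about the $d'_j$ side of the coverage condition). The one structural difference is cosmetic: the paper deletes only the single span $(i_s,k_s)$ and derives the contradiction from a hypothetical job left uncovered, whereas you delete all spans $s,\dots,s^*-1$ and verify coverage job by job; both yield a feasible solution of strictly smaller weight.

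There is, however, one concrete slip in your case analysis. The claim that every configuration of $[i_{s'},k_{s'}]$ relative to $[r'_{j^*+1},d'_{j^*+1}]$ other than $i_{s'}\leq r'_{j^*+1}\leq k_{s'}\leq d'_{j^*+1}$ is ruled out is false: the configuration $r'_{j^*+1}<i_{s'}\leq k_{s'}\leq d'_{j^*+1}$ with $k_{s'}-i_{s'}+1<p_{j^*+1}$ (the span lies strictly inside the window of job $j^*+1$ but is too short) neither contradicts the feasibility of job $j^*+1$ nor forces $i_{s^*}>d'_{j^*+1}$, since one can still have $i_{s^*}\leq d'_{j^*+1}-p_{j^*+1}+1$ whenever $d'_{j^*+1}-r'_{j^*+1}+1>p_{j^*+1}$. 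The slip is harmless, because in this configuration the inequality you need follows from your own ingredients: $d'_j\geq d'_{j^*+1}\geq k_{s'}$ together with coverage of $j$ by $s'$ gives $k_{s'}-i_{s'}+1\geq p_j$, while the failure of $s'$ on job $j^*+1$ gives $k_{s'}-i_{s'}+1<p_{j^*+1}$, so again $p_j<p_{j^*+1}$. The paper avoids the case split altogether: assuming $p_j\geq p_{j^*+1}$, the single chain $\max\{i_{s'},r'_{j^*+1}\}+p_{j^*+1}-1\leq\max\{i_{s'},r'_j\}+p_j-1\leq k_{s'}$ (using \eqref{eq:ordering}) would make $s'$ cover $j^*+1$, a contradiction; this uniform $\max$-comparison is the clean way to repair your write-up, after which your argument is complete.
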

\begin{proof}
  Suppose the statement is false, and let $s$ be the first argument where a violation occurs, that
  is, $s=1$ if $f(1)=0$, and otherwise $s=\min\{s'\,:\,f(s')=f(s'-1)\}$. Let $j^*=f(s)$. By
  definition of $f$,
  \[\left\lvert[r'_{j^*+1},d'_{j^*+1}]\cap[i_{s'},k_{s'}]\right\rvert<p_{j^*+1}\qquad\text{ for all }s'\leq s,\]
  and by feasibility of $(x_{ik})$, there is a smallest $s^*\in\{s+1,\dots,t\}$ with
  \[\left\lvert[r'_{j^*+1},d'_{j^*+1}]\cap[i_{s^*},k_{s^*}]\right\rvert\geq p_{j^*+1}.\]
  We claim that the vector $(x'_{ik})$ defined by $x'_{ik}=0$ if
  $(i,k)=\left(i_{s},k_{s}\right)$, and $x'_{ik}=x_{ik}$ otherwise, is still a feasible
  solution, and this is the required contradiction to the assumption that $(x_{ik})$ is an optimal
  solution. To show the claim assume that it is false. This implies that there exists a job $j$ such
  that
  \begin{equation}\label{eq:assumption_j}
  \left\lvert[r'_j,d'_j]\cap[i_{s'},k_{s'}]\right\rvert<p_j\qquad\text{ for all }
  s'\in\{1,\dots,t\}\setminus\{s\}.  
\end{equation}
The jobs up to job $j^*$ can be scheduled into the first $s-1$ spans, and job $j^*+1$ can be
scheduled into span $s^*$, hence $j\geq j^*+2$. Feasibility of $(x_{ik})$ implies
$\left\lvert[r'_j,d'_j]\cap[i_{s},k_{s}]\right\rvert\geq p_j$, and in particular
\begin{equation}\label{eq:condition_1}
  r_j\leq k_{s}<i_{s^*}.   
\end{equation}
If $p_j\geq p_{j^*+1}$, then using $(i_{s},k_{s})\in B_j$ and $r_{j^*+1}\leq r_j$ we obtain,
\[\max\{i_{s},\,r_{j^*+1}\}+p_{j^*+1}-1\leq\max\{i_{s},\,r_{j}\}+p_j-1\leq k_{s},\]
which implies $(i_{s},k_{s})\in B_{j^*+1}$, contradicting our assumption $f(s)=j^*$. We conclude
$p_j<p_{j^*+1}$, and then
\begin{equation}\label{eq:condition_2}
  \max\{i_{s^*},r_j\}+p_j-1\stackrel{\eqref{eq:condition_1}}{=}i_{s^*}+p_j-1 <i_{s^*}+p_{j^*+1}-1\leq k_{s^*},
\end{equation}  
where the last inequality comes from $(i_{s^*},k_{s^*})\in B_{j^*+1}$. Now~\eqref{eq:condition_2}
implies $(i_{s^*},k_{s^*})\in B_{j}$ which contradicts our assumption~\eqref{eq:assumption_j}, and
this concludes the proof.
\end{proof}

\begin{lemma}\label{lem:interval_partition}
  For every $s\in\{2,\dots,t\}$ and every $j\in\{f(s-1)+1,\,f(s-1)+2,\dots,\,f(s)\}$,
  $(i_s,k_s)\in B_j$.
\end{lemma}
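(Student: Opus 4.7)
The plan is to argue by contradiction. Suppose there exist $s\in\{2,\dots,t\}$ and $j^\dagger\in\{f(s-1)+1,\dots,f(s)\}$ with $(i_s,k_s)\notin B_{j^\dagger}$, and set $j^\ast=f(s-1)+1$. First I would show $(i_s,k_s)\in B_{j^\ast}$: by the maximality in the definition of $f(s-1)$, job $j^\ast$ is not coverable by any of the first $s-1$ spans; combined with strict monotonicity (Lemma~\ref{lem:strong_monotonicity}), which gives $j^\ast\leq f(s)$, this forces $j^\ast$ to be coverable by span $s$ itself. Consequently $j^\dagger>j^\ast$, and since $j^\dagger\leq f(s)$ while $(i_s,k_s)\notin B_{j^\dagger}$, there must exist an index $s^\ast\leq s-1$ with $(i_{s^\ast},k_{s^\ast})\in B_{j^\dagger}$.

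The heart of the argument is then to extract two contradictory inequalities relating $p_{j^\dagger}$ and $p_{j^\ast}$ from the four relations
\[(i_{s^\ast},k_{s^\ast})\in B_{j^\dagger},\quad (i_{s^\ast},k_{s^\ast})\notin B_{j^\ast},\quad (i_s,k_s)\in B_{j^\ast},\quad (i_s,k_s)\notin B_{j^\dagger}.\]
The non-empty intersection $[i_{s^\ast},k_{s^\ast}]\cap[r'_{j^\dagger},d'_{j^\dagger}]$ together with the disjointness $k_{s^\ast}<i_s$ gives $r'_{j^\dagger}\leq k_{s^\ast}<i_s$, which via the ordering~\eqref{eq:ordering} also yields $r'_{j^\ast}<i_s$. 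Dually, $(i_s,k_s)\in B_{j^\ast}$ forces $d'_{j^\ast}\geq i_s>k_{s^\ast}$ and hence $d'_{j^\dagger}>k_{s^\ast}$. These facts collapse the relevant terms: in the two $s^\ast$-relations, $\min(k_{s^\ast},d'_j)=k_{s^\ast}$ for $j\in\{j^\ast,j^\dagger\}$; in the two $s$-relations, $\max(i_s,r'_j)=i_s$ for $j\in\{j^\ast,j^\dagger\}$. Combining the $s^\ast$-relations and using $r'_{j^\ast}\leq r'_{j^\dagger}$ then yields $p_{j^\dagger}<p_{j^\ast}$, while combining the $s$-relations and using $d'_{j^\ast}\leq d'_{j^\dagger}$ yields $p_{j^\ast}<p_{j^\dagger}$. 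The two inequalities contradict each other.

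I expect the main obstacle to be arranging the algebra so that the $\min$ and $\max$ terms collapse enough to produce strict, strictly opposite inequalities rather than weaker bounds with residual correction terms. The unlocking observation is that the existence of the earlier witness $s^\ast<s$ for $j^\dagger$, together with the disjointness $k_{s^\ast}<i_s$ of consecutive spans, automatically forces $r'_{j^\dagger}<i_s$; ordering~\eqref{eq:ordering} then propagates this to $r'_{j^\ast}<i_s$, and the dual statement on deadlines collapses the $\min$'s involving $k_{s^\ast}$. Once this geometric picture is established, the two resulting length-of-intersection computations become mirror copies and deliver the contradicting estimates on $p_{j^\dagger}-p_{j^\ast}$.
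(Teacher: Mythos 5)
Your proposal is correct and takes essentially the same route as the paper's proof: the paper also sets $j^*=f(s-1)+1$, takes the earlier witness span $s'<s$, and uses the collapse $r'_j\leq k_{s'}<i_s$ together with the ordering~\eqref{eq:ordering} to compare processing times, splitting into the cases $p_j\geq p_{j^*}$ (which would place $j^*$ in span $s'$, contradicting $f(s-1)=j^*-1$) and $p_j<p_{j^*}$ (which places $j$ in span $s$); your global contradiction deriving both $p_{j^\dagger}<p_{j^*}$ and $p_{j^*}<p_{j^\dagger}$ is exactly the contrapositive packaging of these two cases. If anything, your explicit treatment of the $\min\{k_s,d'_{j}\}$ term makes precise a step the paper leaves implicit (that $i_s+p_j-1\leq d'_j$ follows from $(i_s,k_s)\in B_{j^*}$ and $d'_{j^*}\leq d'_j$).
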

\begin{proof}
  Fix $s\in\{2,\dots,t\}$, $j\in\{f(s-1)+1,\,f(s-1)+2,\dots,\,f(s)\}$, and set $j^*=f(s-1)+1$. By
  Lemma~\ref{lem:strong_monotonicity}, $(i_s,k_s)\in B_{j^*}$ but $(i_{s'},k_{s'})\not\in B_{j^*}$
  for all $s'<s$. By definition of $f$, $(i_{s'},k_{s'})\in B_j$ for some $s'\leq s$. If $s'=s$ then
  there is nothing to do, so assume $s'<s$. From
  $\left\lvert[r'_j,d'_j]\cap[i_{s'},k_{s'}]\right\rvert\geq p_j$ it follows that
  \begin{equation}\label{eq:condition_1a}
   r_j\leq k_{s'}<i_{s}.   
  \end{equation}    
  If $p_j\geq p_{j^*}$, then using $(i_{s'},k_{s'})\in B_{j}$ and $r_{j^*}\leq r_j$ we obtain, 
  \[\max\{i_{s'},\,r_{j^*}\}+p_{j^*}-1\leq\max\{i_{s'},\,r_{j}\}+p_j-1\leq k_{s'},\]
  which implies $(i_{s'},k_{s'})\in B_{j^*}$, contradicting our assumption $f(s-1)=j^*-1$. We conclude
  $p_j<p_{j^*}$, and then
  \begin{equation}\label{eq:condition_2a}
    \max\{i_{s},r_j\}+p_j-1\stackrel{\eqref{eq:condition_1a}}{=}i_{s}+p_j-1 <i_{s}+p_{j^*}-1\leq k_{s},
  \end{equation}  
  where the last inequality comes from $(i_{s},k_{s})\in B_{j^*}$. Now~\eqref{eq:condition_2a}
  implies $(i_{s},k_{s})\in B_{j}$, as required.  
\end{proof}
\begin{lemma}\label{lem:lower_bound}
  Let $P$ be the path $(0,f(1),f(2),\dots,f(t)=n)$ in the digraph $D$. Then
  \[\sum_{(i,k)\in F_0}(k-i+1)x_{ik}\geq\wt(P).\]
\end{lemma}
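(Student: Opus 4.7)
The plan is to prove the termwise bound $k_s - i_s + 1 \geq \wt(f(s-1), f(s))$ for every $s \in \{1, \dots, t\}$ (with the convention $f(0) = 0$). Summation then gives the result, since by definition of the support $S$ we have $\sum_{(i,k) \in F_0}(k - i + 1) x_{ik} = \sum_{s=1}^{t}(k_s - i_s + 1)$, while $\wt(P) = \sum_{s=1}^{t}\wt(f(s-1), f(s))$. Unpacking $\wt(f(s-1), f(s)) = \beta(f(s-1), f(s)) - \alpha(f(s-1)) + 1$, the termwise bound reduces to $k_s - i_s \geq \beta(f(s-1), f(s)) - \alpha(f(s-1))$.

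The crucial intermediate claim is $i_s \leq \alpha(f(s-1))$. To prove it, let $j_0 \in \{f(s-1)+1, \dots, n\}$ be any job attaining the minimum defining $\alpha(f(s-1))$. If $j_0 \leq f(s)$, Lemma~\ref{lem:interval_partition} gives $(i_s, k_s) \in B_{j_0}$, so $i_s \leq d'_{j_0} - p_{j_0} + 1 = \alpha(f(s-1))$. If instead $j_0 > f(s)$, strict monotonicity of $f$ (Lemma~\ref{lem:strong_monotonicity}) provides some $s' > s$ with $f(s'-1) < j_0 \leq f(s')$, and Lemma~\ref{lem:interval_partition} yields $(i_{s'}, k_{s'}) \in B_{j_0}$, hence $i_{s'} \leq \alpha(f(s-1))$. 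The minimality assumption on the support imposes $i_{s+1} \geq k_s + 2 > i_s$, and iterating gives $i_s < i_{s'} \leq \alpha(f(s-1))$. The subtlety here, and the main obstacle of the argument, is that $\alpha$ minimises over all jobs $j \geq f(s-1) + 1$ rather than only those assigned to span $s$, so the bound on $i_s$ may need to be transported from a later span using the ordering of the support.

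Once $i_s \leq \alpha(f(s-1))$ is established, the termwise bound is straightforward. Fix any $j \in \{f(s-1)+1, \dots, f(s)\}$. Lemma~\ref{lem:interval_partition} gives $(i_s, k_s) \in B_j$, so $k_s \geq \max\{i_s, r'_j\} + p_j - 1$, equivalently
\[ k_s - i_s \geq \max\{0, r'_j - i_s\} + p_j - 1. \]
From $i_s \leq \alpha(f(s-1))$ it follows that $r'_j - i_s \geq r'_j - \alpha(f(s-1))$, and hence $\max\{0, r'_j - i_s\} \geq \max\{0, r'_j - \alpha(f(s-1))\}$ irrespective of the sign of $r'_j - \alpha(f(s-1))$. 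Rewriting $\max\{0, r'_j - \alpha(f(s-1))\} = \max\{\alpha(f(s-1)), r'_j\} - \alpha(f(s-1))$ and then taking the maximum over $j$ gives $k_s - i_s \geq \beta(f(s-1), f(s)) - \alpha(f(s-1))$, which is the desired termwise bound. Summing over $s$ concludes the argument.
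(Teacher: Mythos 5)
Your proof is correct, and it establishes the same termwise decomposition as the paper --- the per-arc inequality $k_s-i_s+1\geq\wt(f(s-1),f(s))$ of \eqref{eq:arc_weights}, summed over $s$ --- but justifies the key step by a genuinely different and arguably cleaner argument. The paper first normalizes: it assumes without loss of generality that shifting the span to $(i_s+1,k_s+1)$ destroys feasibility, deduces $i_s=d'_j-p'_j+1$ for some $j>f(s-1)$, and then pins down the exact identities $i_s=\alpha(f(s-1))$ and, invoking optimality, $k_s=\beta(f(s-1),f(s))$. You avoid the shifting normalization entirely and prove only the inequality $i_s\leq\alpha(f(s-1))$, transporting the bound from a later span via the support ordering and strict monotonicity (Lemma~\ref{lem:strong_monotonicity}) when the minimizing job $j_0$ lies beyond $f(s)$; the possible slack $\alpha(f(s-1))-i_s\geq 0$ is then absorbed by the monotonicity of $x\mapsto\max\{0,x\}$ in $\max\{0,\,r'_j-i_s\}\geq\max\{0,\,r'_j-\alpha(f(s-1))\}$, which suffices because the arc weight involves only the difference $\beta-\alpha$. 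This buys two things: you never need to argue that the shift is really ``without loss of generality'' (the shift changes the solution and hence potentially $f$ and the path $P$ itself, a point the paper's proof leaves implicit), and within this lemma you use optimality of $(x_{ik})$ only indirectly, through the earlier lemmas; the paper's version, in exchange, yields the sharper structural fact that a normalized optimal solution has each span exactly equal to $[\alpha(f(s-1)),\,\beta(f(s-1),f(s))]$. Two minor remarks: you correctly use $\wt(v,w)=\beta(v,w)-\alpha(v)+1$, whereas \eqref{eq:arc_weights} as printed has a sign typo ($-1$); and for $s=1$ (and for $j_0\leq f(1)$ in your Case~1) Lemma~\ref{lem:interval_partition} is stated only for $s\geq 2$, but the needed fact $(i_1,k_1)\in B_j$ for all $j\leq f(1)$ is immediate from the definition of $f$ --- the paper's proof relies on this tacitly as well.
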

\begin{proof}
  The statement follows from
  \begin{equation}\label{eq:arc_weights}
    k_s-i_s+1 \geq\wt(f(s-1),f(s))= \beta(f(s-1),f(s))-\alpha(f(s-1))-1 \text{ for all }s\in\{1,2,\dots,t\},
  \end{equation}
  where we extend $f$ by setting $f(0)=0$. In order to verify~\eqref{eq:arc_weights}, we assume
  without loss of generality, that replacing $(i_s,k_s)$ with $(i_s+1,k_s+1)$ does not lead to a
  feasible solution. This implies that $i_s=d'_j-p'_j+1$ for some
  $j\in\{f(s-1)+1,\dots,n\}$. Furthermore, since $i_s<i_{s+1}<\dots<i_t$, and by
  Lemma~\ref{lem:interval_partition} all the jobs $j\in\{f(s-1)+1,\dots,n\}$ can be scheduled in one
  of the spans $[i_s,k_s]$,\dots $[i_t,k_t]$, we conclude
  \[i_s=\min\left\{d'_j-p_j+1\,:\,j=f(s-1)+1,\dots,n\right\}=\alpha(f(s-1)).\]
  From $(i_s,k_s)\in B_j$ for all $j\in\{f(s-1)+1,\dots,s\}$, it follows that
  \[k_s\geq\max\left\{\max\{r'_j,\alpha(v)\}+p_j-1\,:\,f(s-1)+1\leq j\leq
      f(s)\right\}=\beta(f(s-1),f(s)),\]
  and since $(x_{ik})$ is optimal, we obtain $k_s=\beta(f(s-1),f(s))$, and this concludes the proof.
\end{proof}
Combining Lemmas~\ref{lem:upper_bound} and~\ref{lem:lower_bound}, we obtain the following theorem.
\begin{theorem}\label{thm:quadratic_algorithm}
  Under assumption~\eqref{eq:ordering}, any shortest path from $0$ to $n$ in the digraph $D$
  corresponds to an optimal solution of the unidirectional maintenance scheduling problem. In
  particular, the problem can be solved in time $O(n^2)$.
\end{theorem}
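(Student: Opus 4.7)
The plan is to combine the two preceding lemmas to pinch the optimal value between the shortest-path length and itself, and then to argue the runtime from the structure of $D$. First, I would invoke Lemma~\ref{lem:upper_bound} applied to a shortest $0$-to-$n$ path $P^{*}$ in $D$: the $(x_{ik})$ produced from $P^{*}$ satisfies~\eqref{eq:uniSCM_covering} and has cost $\wt(P^{*})$, so the optimal value $\mathrm{OPT}$ of [uniSCM] satisfies $\mathrm{OPT}\leq\wt(P^{*})$.

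Next, I would apply Lemma~\ref{lem:lower_bound} to an optimal solution $(x_{ik})$ of [uniSCM] with minimum support. This produces a path $P=(0,f(1),\dots,f(t)=n)$ in $D$ with $\wt(P)\leq \mathrm{OPT}$. Since $P^{*}$ is a shortest path, $\wt(P^{*})\leq\wt(P)\leq\mathrm{OPT}$. Combining the two inequalities gives $\mathrm{OPT}=\wt(P^{*})$, so the feasible solution read off from any shortest path is in fact optimal. The correctness part therefore reduces to citing the two lemmas and observing that $\mathrm{OPT}$ and $\wt(P^{*})$ sandwich each other; I expect no obstacle here beyond carefully distinguishing upper and lower bounds.

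For the runtime, the main point is that $D$ has $n+1$ vertices and the arc set $\{(v,w)\,:\,0\leq v<w\leq n\}$ is acyclic by construction (arcs go from smaller to larger indices). The single-source shortest-path problem in a DAG with $N$ vertices and $M$ arcs can be solved in $O(N+M)$ time after a topological sort, which is already available from the natural ordering $0<1<\dots<n$; here $N=n+1$ and $M=\binom{n+1}{2}=O(n^{2})$. I would then argue that all arc weights can be precomputed in $O(n^{2})$ time: the values $\alpha(0),\alpha(1),\dots,\alpha(n-1)$ can be obtained in $O(n)$ by a right-to-left sweep over the jobs, and for each fixed $v$ the quantities $\beta(v,w)$ for $w=v+1,v+2,\dots,n$ can be built incrementally in $O(n)$ time by maintaining a running maximum over $\max\{r'_{j},\alpha(v)\}+p_{j}-1$ as $j$ increases, which totals $O(n^{2})$ over all $v$.

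The only subtlety worth flagging is that the reduction is only valid when assumption~\eqref{eq:ordering} holds, since this ordering is what drives Lemmas~\ref{lem:strong_monotonicity} and~\ref{lem:interval_partition} used inside Lemma~\ref{lem:lower_bound}; I would state this explicitly at the start of the proof. Apart from that caveat, the entire argument is a direct consequence of the machinery already established, so I do not anticipate a genuine obstacle beyond bookkeeping.
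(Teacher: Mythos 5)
Your proposal is correct and takes essentially the same route as the paper, which derives Theorem~\ref{thm:quadratic_algorithm} precisely by combining Lemmas~\ref{lem:upper_bound} and~\ref{lem:lower_bound} in the sandwich $\wt(P^*)\leq\wt(P)\leq\mathrm{OPT}\leq\wt(P^*)$ that you describe. Your runtime details (suffix minima for $\alpha$, incremental running maxima for $\beta$, and shortest paths in a topologically ordered DAG with $O(n^2)$ arcs) are a sound elaboration of the $O(n^2)$ claim that the paper leaves implicit.
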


\section{The bidirectional case with singleton possessions and a bounded number of intersections}\label{sec:bounded_intersections}
In this section we make the following assumptions.
\begin{assumption}\label{ass:singleton_possessions}
  Each possession contains exactly one path.   
\end{assumption}
\begin{assumption}\label{ass:bounded_intersections}
  The number of intersections of each path with other paths is bounded.
\end{assumption}
We say that an instance satisfying the first condition is an instance with \emph{singleton
  possessions}. Since the path starting at time $i\Delta$ intersects precisely the paths in the
opposite direction which arrive in the time interval $[i\Delta,i\Delta+2\delta]$ (see
Figure~\ref{fig:intersection_up_down}), every path intersects at most
$\lfloor2\delta/\Delta\rfloor+1$ other paths, and the second condition is equivalent to the
requirement that the ratio $\delta/\Delta$ is bounded above.
\begin{figure}
  \centering
  \begin{tikzpicture}[xscale=.8]
    \draw[thick,->] (-.5,0) -- (16,0);
    \draw[thick,->] (0,-.5) -- (0,5.5);
    \foreach \x in {1,...,12} \draw (\x-2,5) -- (\x+3,0);
    \draw[very thick] (4,0) -- (9,5);
    \node at (9,5.3) {$\ell_i^u$};
    \node at (6,5.3) {$\ell_{i'}^d$};
    \draw[<->] (7,5.3) to node[above] {$\Delta$} (8,5.3);
    \draw (7,5.1) -- (7,5.5);
    \draw (8,5.1) -- (8,5.5);
    \node at (4,-.5) {{\small$i\Delta$}};
    \node at (6,-.5) {{\small$i'\Delta$}};
    \node at (9,-.5) {{\small$i\Delta+\delta$}};
    \node at (11,-.5) {{\small$i'\Delta+\delta$}};
    \draw[<->] (4,-1.3) to node[above] {$2\delta$} (14,-1.3);
    \draw (4,-1.1) -- (4,-1.5);
    \draw (14,-1.1) -- (14,-1.5);
    \draw[dashed] (6,-.3) -- (6,5);
    \draw[dashed] (4,0) -- (4,-.3);
    \draw[dashed] (9,5) -- (9,-.3);
    \draw[dashed] (11,0) -- (11,-.3);    
  \end{tikzpicture}
  \caption{Intersection of an up-path with $11$ down-paths. The path $\ell_{i'}^d$ intersects
    $\ell_i^u$ if and only if $i\Delta\leq i'\Delta+\delta \leq i\Delta+2\delta$.}\label{fig:intersection_up_down}
\end{figure}
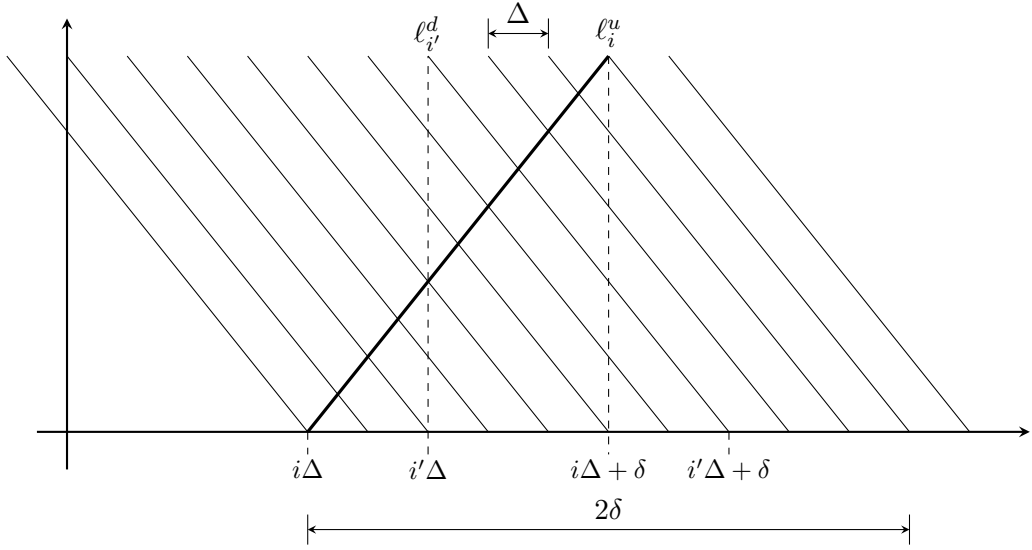
Extending the shortest path approach from Section~\ref{subsec:ordered_jobs}, we will show that under
these two assumptions the maintenance scheduling problem can be solved in polynomial time.

Given Assumption~\ref{ass:singleton_possessions}, we can identify $\mathcal R_j$ with the set
$\{\ell\in\mathcal P\,:\,\{\ell\}\in\mathcal R_j\}$, so that a solution for a job set
$J \subseteq \mc{J}$ is a set $X\subseteq\mathcal P$ such that $X\cap\mathcal R_j\neq\emptyset$ for
every job $j\in J$. 
\begin{example}\label{ex:instance}
  For the instance shown in Figure~\ref{fig:assumptions}, we have
  \begin{align*}
    \mathcal R_1 &= \{\ell_1^d,\,\ell_4^u\},&
    \mathcal R_2 &= \{\ell_2^d,\,\ell_4^u,\ell_3^d,\ell_5^u\},\\
    \mathcal R_3 &= \{\ell_3^d,\,\ell_1^u,\ell_4^d,\ell_2^u,\ell_5^d,\,\ell_3^u\},&
    \mathcal R_4 &= \{\ell_5^d,\,\ell_5^u,\ell_6^d\},\\
    \mathcal R_5 &= \{\ell_3^u,\,\ell_7^d,\ell_4^u,\,\ell_8^d,\,\ell_5^u,\ell_9^d\},&
    \mathcal R_6 &= \{\ell_8^u,\,\ell_6^d,\ell_9^u,\,\ell_7^d,\,\ell_{10}^u,\ell_8^d\},\\
    \mathcal R_7 &= \{\ell_{10}^d,\,\ell_8^u,\ell_{11}^d,\,\ell_9^u\},&
    \mathcal R_8 &= \{\ell_{10}^d,\,\ell_{10}^u,\ell_{11}^d,\,\ell_{11}^u\}
  \end{align*}
  and a feasible solution is given by $X=\{\ell_3^d,\,\ell_4^u,\,\ell_5^u,\,\ell_9^u,\,\ell_{11}^d\}$.
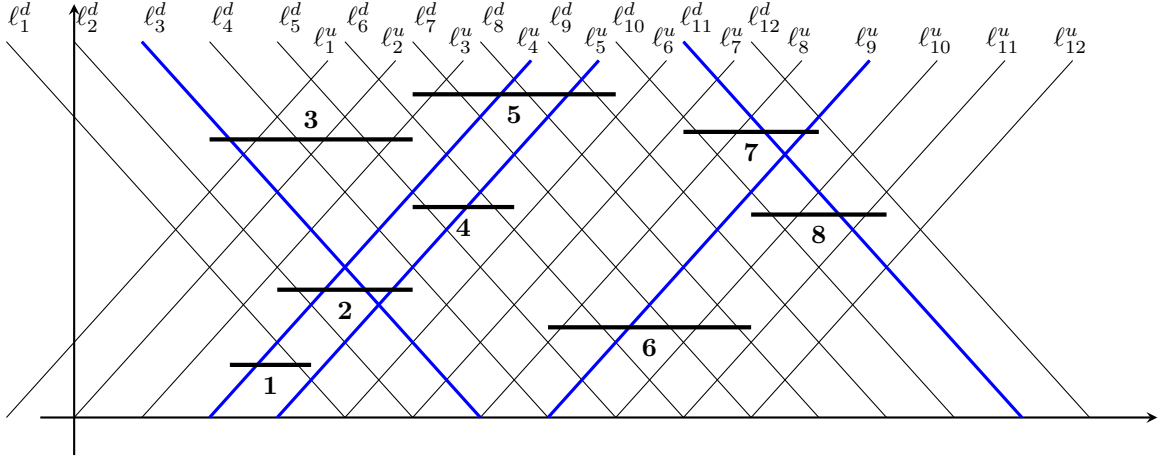
\begin{figure}
  \centering
  \begin{tikzpicture}[xscale=.9]
    \draw[thick,->] (-.5,0) -- (16,0);
    \draw[thick,->] (0,-.5) -- (0,5.5);
    \foreach \x in {1,...,12} {
      \draw (\x-2,5) -- (\x+3,0);
      \draw (\x-2,0) -- (\x+2.75,4.75);
      \draw (\x-1.8,5) node[anchor=south] {{\small $\ell_{\x}^d$}};
      \draw (\x+2.72,4.72) node[anchor=south] {{\small $\ell_{\x}^u$}};
    }
    \draw[very thick,blue] (1,5) -- (6,0);
    \draw[very thick,blue] (9,5) -- (14,0);
    \draw[very thick,blue] (2,0) -- (6.75,4.75);
    \draw[very thick,blue] (3,0) -- (7.75,4.75);
    \draw[very thick,blue] (7,0) -- (11.75,4.75);
    \draw[ultra thick] (2,3.7) to node[above] {{\small$\mathbf{3}$}} (5,3.7);
    \draw[ultra thick] (3,1.7) to node[below] {{\small$\mathbf{2}$}} (5,1.7);
    \draw[ultra thick] (5,2.8) to node[below] {{\small$\mathbf{4}$}} (6.5,2.8);
    \draw[ultra thick] (5,4.3) to node[below] {{\small$\mathbf{5}$}} (8,4.3);
    \draw[ultra thick] (7,1.2) to node[below] {{\small$\mathbf{6}$}} (10,1.2);
    \draw[ultra thick] (10,2.7) to node[below] {{\small$\mathbf{8}$}} (12,2.7);
    \draw[ultra thick] (9,3.8) to node[below] {{\small$\mathbf{7}$}} (11,3.8);
    \draw[ultra thick] (2.3,.7) to node[below] {{\small $\mathbf 1$}} (3.5,.7);    
  \end{tikzpicture}
  \caption{An instance of the maintenance scheduling problem as considered in this section. The
    eight horizontal lines represent the jobs, and a feasible solution is indicated in blue.}\label{fig:assumptions}
\end{figure}
\end{example}
The chronological order in which paths intersect with job $j$ induces an order on the set
$\mathcal{R}_j$, and in the following we will refer to this order when we talk about the first, last
or $k$-th path of $\mathcal R_j$. We are looking for a solution $X$ for $\mathcal J$ with minimum
cardinality $\lvert X\rvert$, and we will prove that such an $X$ can be determined in time
polynomial in $n=\lvert \mc{J}\rvert$ as long as $\delta/\Delta=O(1)$.

For every $j$, let $\mc{R}^*_j$ be the set containing the last up-path (if it exists) and the last
down-path (if it exists) in $\mathcal R_j$. So $\lvert\mathcal R_j^*\rvert=2$ if
$\mathcal R_j$ contains both up and down-paths, and $\lvert \mathcal R_j^*\rvert=1$, otherwise.  Set
$\mathcal P^*=\bigcup_{j\in\mathcal J}\mc{R}^*_j$.
\begin{example}
  The reduced possession sets for the instance in Figure~\ref{fig:assumptions} are
  \begin{align*}
    \mathcal R^*_1 &= \{\ell_1^d,\,\ell_4^u\},& \mathcal R^*_2 &= \{\ell_3^d,\ell_5^u\},&
    \mathcal R^*_3 &= \{\ell_5^d,\,\ell_3^u\},& \mathcal R^*_4 &= \{\ell_5^u,\ell_6^d\},\\
    \mathcal R^*_5 &= \{\ell_5^u,\ell_9^d\},& \mathcal R^*_6 &= \{\ell_{10}^u,\ell_8^d\},&
    \mathcal R^*_7 &= \{\ell_{11}^d,\,\ell_9^u\},& \mathcal R^*_8 &= \{\ell_{11}^d,\,\ell_{11}^u\},
  \end{align*}
  so that $\mathcal P^*=\{\ell_1^d,\,\ell_3^d,\,\ell_5^d,\,\ell_{6}^d,\,\ell_{8}^d,\,\ell_{9}^d,\,\ell_{11}^d,\,\ell_{3}^u,\,\ell_{4}^u,\,\ell_{5}^u,\,\ell_{9}^u,\,\ell_{10}^u,\,\ell_{11}^u\}$.
\end{example}
\begin{lemma}\label{lem:interesting_paths}
  For any job set $\mc{J}$, there exists an optimal solution $X\subseteq\mathcal P^*$.
\end{lemma}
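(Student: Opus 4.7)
The plan is a standard exchange argument. I would start from an optimal solution $X^*$ chosen to minimise $\lvert X^*\setminus\mathcal P^*\rvert$ among all optimal solutions, and argue by contradiction that this quantity must be zero. Assume otherwise and pick some $\ell\in X^*\setminus\mathcal P^*$; by the symmetry between up- and down-paths I may restrict to the case where $\ell=\ell_i^u$ is an up-path.

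Next, I would consider $J_\ell=\{j\in\mathcal J\,:\,\ell\in\mathcal R_j\}$ and its subset $J'_\ell=\{j\in J_\ell\,:\,X^*\cap\mathcal R_j=\{\ell\}\}$ of jobs whose only covering path in $X^*$ is $\ell$. Optimality of $X^*$ forces $J'_\ell\neq\emptyset$, since otherwise $X^*\setminus\{\ell\}$ would be a strictly smaller feasible solution. Because $\ell\notin\mathcal P^*$, for every $j\in J_\ell$ the path $\ell_i^u$ is not the last up-path of $\mathcal R_j$, so the index $\lambda^u_j:=\max\{k\,:\,\ell_k^u\in\mathcal R_j\}$ satisfies $\lambda^u_j>i$. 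I would then pick $j^*\in\argmin_{j\in J'_\ell}\lambda^u_j$ and set $\ell^*=\ell_{\lambda^u_{j^*}}^u\in\mathcal R^*_{j^*}\subseteq\mathcal P^*$.

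The crucial step, and the main obstacle, is verifying that $\ell^*\in\mathcal R_j$ for every $j\in J'_\ell$, so that the swap preserves feasibility. This reduces to the geometric fact that the index set $U_j=\{k\,:\,\ell_k^u\in\mathcal R_j\}$ is a contiguous interval of integers. Indeed, for any fixed start time $s$ the up-paths intersecting the rectangle of job $j$ are precisely those with index in the real interval $I(s)=[(s-(\delta/L)\ell^e_j)/\Delta,\,(s+p_j-(\delta/L)\ell^s_j)/\Delta]$, which has a fixed width and slides linearly in $s$. As $s$ ranges over the interval $[r_j,d_j-p_j]$, the union of the $I(s)$ is again an interval, so $U_j$, being the set of integers in this union, is contiguous; note that Assumption~\ref{ass:singleton_possessions} is what makes the identification of $\mathcal R_j$ with a set of individual paths meaningful in the first place. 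Once contiguity is in hand, $i\in U_j$ together with $\lambda^u_j\geq\lambda^u_{j^*}>i$ forces $\lambda^u_{j^*}\in U_j$, and hence $\ell^*\in\mathcal R_j$ for every $j\in J'_\ell$.

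Finally, I would set $X'=(X^*\setminus\{\ell\})\cup\{\ell^*\}$ and check feasibility by splitting $\mathcal J$ into jobs outside $J_\ell$ (unaffected by the swap), jobs in $J_\ell\setminus J'_\ell$ (still covered by some other path in $X^*$), and jobs in $J'_\ell$ (covered by $\ell^*$). Optimality of $X^*$ forces $\ell^*\notin X^*$, since otherwise $\lvert X'\rvert<\lvert X^*\rvert$ would be a contradiction. Therefore $\lvert X'\rvert=\lvert X^*\rvert$ and $\lvert X'\setminus\mathcal P^*\rvert=\lvert X^*\setminus\mathcal P^*\rvert-1$, contradicting the minimality in the choice of $X^*$.
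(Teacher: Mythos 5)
Your proof is correct and is essentially the paper's own argument: a minimal-counterexample exchange in which the offending path $\ell^u_i\in X^*\setminus\mathcal P^*$ is swapped for $\ell^*=\ell^u_{\lambda^u_{j^*}}$ with $\lambda^u_{j^*}=\min_{j\in J'_\ell}\lambda^u_j$, which by contiguity coincides exactly with the paper's replacement path $\ell^u_{i'}$, $i'=\max\{i''\,:\,\ell^u_{i''}\in\mathcal R_j\text{ for all }j\in J'\}$. The only differences are cosmetic: you prove explicitly the contiguity of the up-path indices of $\mathcal R_j$, which the paper uses implicitly in the step ``$\ell^u_i\notin\mathcal P^*$ implies $\ell^u_{i+1}\in\mathcal R_j$,'' and you obtain $\ell^*\in\mathcal P^*$ directly as the last up-path of the bottleneck job $j^*$ rather than via the maximality of $i'$.
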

\begin{proof}
  Suppose the statement is false. Let $X$ be an optimal
  solution with $\lvert X\setminus\mathcal P^*\rvert$ minimal. First assume
  $\ell^u_i\in X\setminus\mathcal P^*$, set $X'=X\setminus\{\ell^u_{i}\}$, and let
  $J'=\{j\in \mc{J}\,:\,\mc{R}_j\cap X'=\emptyset\}$. Then feasibility of $X$ implies $\ell^u_i\in \mc{R}_j$ for
  all $j\in J'$, and $\ell^u_i\not\in\mathcal P^*$ implies $\ell^u_{i+1}\in \mc{R}_j$ for all $j\in
  J'$. With $i'=\max\{i''\,:\,\ell^u_{i''}\in \mc{R}_j\text{ for all }j\in J'\}$, we have
  $\ell^u_{i'}\in\mathcal P^*$ (otherwise $\ell^u_{i'+1}\in \mc{R}_j$ for all $j\in J'$), and then
  $X''=X'\cup\{\ell^u_{i'}\}$ is an optimal solution with
  $\lvert X''\setminus\mathcal P^*\rvert<\lvert X\setminus\mathcal P^*\rvert$, contradicting the
  minimality assumption on $\lvert X\backslash \mc{P}^* \rvert$. The argument for $\ell^d_i\in X\setminus\mathcal P^*$ is similar.
\end{proof}
\begin{definition}\label{def:block}
  For $B \subseteq \mc{P}$, a pair $(\ell,B)$ with $\ell \in B$ is called a \emph{block} if
  $\ell' \cap \ell \neq \emptyset$ for all $\ell'\in B$. Furthermore, the pair $(0,\emptyset)$ is
  called a \emph{null block}. 
\end{definition}
\begin{definition}\label{def:X_l_block}
  For a path set $X \subseteq \mc{P}$ and a path $\ell\in X$, we define a block $B(\ell,X)$ by
  $B(\ell,X)=\{\ell'\in X\,:\,\ell\cap\ell'\neq\emptyset\}$.
\end{definition}
\begin{example}
  For the solution $X=\{\ell_3^d,\,\ell_4^u,\,\ell_5^u,\,\ell_9^u,\,\ell_{11}^d\}$ in
  Example~\ref{ex:instance}, $B\left(\ell^d_3,X\right)=\left\{\ell_3^d,\,\ell_4^u,\,\ell_5^u\right\}$.
\end{example}
We call two blocks $(\ell,B)$ and $(\ell',B')$ disjoint if $B \cap B' = \emptyset$. Every
feasible solution can be represented as a collection of pairwise disjoint blocks. This
representation is not unique, and in particular, the partition into singleton sets always works.
\begin{example}
  For the solution $X=\left\{\ell_3^d,\,\ell_4^u,\,\ell_5^u,\,\ell_9^u,\,\ell_{11}^d\right\}$ in
  Example~\ref{ex:instance},
  $X=\left\{\ell_3^d,\,\ell_4^u,\,\ell_5^u\right\}\cup\left\{\ell_9^u,\,\ell_{11}^d\right\}$ and
  $X=\left\{\ell_3^d,\,\ell_4^u\right\}\cup\{\ell_5^u\}\cup\left\{\ell_9^u,\,\ell_{11}^d\right\}$
  are block decompositions. 
\end{example}
We will show that an optimal collection of disjoint blocks can be found in polynomial time. For a
path $\ell\in\mathcal P$, let $H_L(\ell)$ and $H_R(\ell)$ be the sets of paths to the left of $\ell$
and to the right of $\ell$, respectively:
\begin{align*}
   H_L(\ell) &=
  \begin{cases}
    \{\ell^u_{i'} \in \mc{P}: i' < i\} \cup \{\ell^d_{i'} \in \mc{P}: i'\Delta+\delta < i\Delta \}  &\text{if } \ell=\ell^u_i \\
    \{\ell^d_{i'} \in \mc{P}: i' < i \} \cup \{\ell^u_{i'} \in \mc{P}: i'\Delta+\delta <  i\Delta \} &\text{if } \ell=\ell^d_i.     
  \end{cases},\\
  H_R(\ell) &=
  \begin{cases}
    \{\ell^u_{i'} \in \mc{P}: i' > i\} \cup \{\ell^d_{i'} \in \mc{P}:  i'\Delta > i\Delta+\delta \}  &\text{if } \ell=\ell^u_i \\
    \{\ell^d_{i'} \in \mc{P}: i' > i \} \cup \{\ell^u_{i'} \in \mc{P}:  i'\Delta > i\Delta +\delta   \} &\text{if } \ell=\ell^d_i.
  \end{cases}
\end{align*}
In addition set $H_L(0)=H_R(0)=\mathcal P$.
\begin{example}
  In Figure~\ref{fig:assumptions},
  $H_L\left(\ell^d_6\right)=\left\{\ell^d_1,\dots,\ell^d_5\right\}\cup\{\ell^u_1\}$ and $H_R\left(\ell^d_6\right)=\left\{\ell^d_7,\dots,\ell^d_{12}\right\}\cup\{\ell^u_{12}\}$.
\end{example}
For the block $(\ell,B)$ and the job set $J$, let
$J_M(J,B)$ be the set of jobs from $J$ that are covered by $B$, and let $J_L(J,\ell,B)$ and
$J_R(J,\ell,B)$ be the sets of jobs from $J$ that can be covered by paths to the left and the right
of $\ell$, respectively. More formally,
\begin{align*}
  J_M(J,B)&=\{j \in J: \mc{R}_j \cap B \neq \emptyset\}, \\
  J_L(J,\ell,B)&=\{j \in J \setminus J_M(J,B): \mc{R}_j \cap H_L(\ell) \neq \emptyset\},\\
  J_R(J,\ell,B)&=\{j \in J \backslash J_M(J,B): \mc{R}_j \cap H_R(\ell) \neq \emptyset\}. 
\end{align*}
Then $J_L(J,\ell,B)\cap J_R(J,\ell,B)=\emptyset$. To see this, suppose there is a job
$j\in J_L(J,\ell,B)\cap J_R(J,\ell,B)$ (in particular, $j\not\in J_M(J,B)$). Then
$\mathcal R_j\cap H_L(\ell)\neq\emptyset$ and $\mathcal R_j\cap H_R(\ell)\neq\emptyset$, which
implies $\ell\in\mathcal R_j$. But then $j\in J_M(J,B)$, which contradicts the assumption. We are
particularly interested in blocks $(\ell,B)$ with the property that every job that cannot be
scheduled in block $B$, can be scheduled by cancelling paths that are either to the left or to the
right of $\ell$. This is captured by the following definition.
\begin{definition}\label{def:splitting_block}
  A block $(\ell,B)$ is \emph{splitting} (for a job set $J\subseteq \mc{J}$) if
  \[J_M(J,B)\cup J_L(J,\ell,B)\cup J_R(J,\ell,B)=J.\]
\end{definition}
The only way for a block $(\ell_i^u,B)$ to violate the condition in
Definition~\ref{def:splitting_block} is that there exists a job $j\in J$ such that $\mathcal R_j$
does not contain any up-paths which implies $\mathcal R_j=\{\ell^d_{i'}\}$ for some $i'$ with
$\ell^d_{i'}\cap\ell^u_i\neq\emptyset$. In this situation $\ell^d_{i'}$ must be an element of $B$ if
$(\ell_i^u,B)$ is splitting. Similarly, a block $(\ell_i^d,B)$ is splitting for $J$ if and only
if $\ell^u_{i'}\in B$ for all $i'$ which satisfy $\ell^u_{i'}\cap\ell^d_{i}\neq\emptyset$ and 
$\mathcal R_j=\{\ell^u_{i'}\}$ for some $j\in J$.
\begin{example}
  In Figure~\ref{fig:assumptions} every block is splitting because every $\mathcal R_j$ contains
  paths in both directions.
\end{example}

\begin{lemma}\label{lem:splitting_block}
  Let $X \subseteq \mc{P}$ be a feasible solution for a job set $J \subseteq \mc{J}$, and let
  $\ell\in X$. Then $(\ell,B(\ell,X))$ is a splitting block for $J$.
\end{lemma}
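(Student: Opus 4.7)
The plan is to unpack the definition of a splitting block and show that every job $j\in J$ falls into one of the three sets $J_M(J,B(\ell,X))$, $J_L(J,\ell,B(\ell,X))$, or $J_R(J,\ell,B(\ell,X))$. Since $X$ is feasible for $J$, for any such $j$ there exists at least one path $\ell'\in X\cap\mathcal R_j$, and the proof proceeds by a case distinction on whether or not $\ell'$ intersects $\ell$.

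First, I would handle the easy case: if $\ell'\cap\ell\neq\emptyset$, then by the definition of $B(\ell,X)$ we immediately have $\ell'\in B(\ell,X)$, so $\ell'\in \mathcal R_j\cap B(\ell,X)$ and hence $j\in J_M(J,B(\ell,X))$. So the remaining work is for the case $\ell'\cap\ell=\emptyset$.

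The main observation — and the step that takes a little care — is the geometric fact that every path $\ell'\in\mathcal P$ that does not meet $\ell$ lies in $H_L(\ell)\cup H_R(\ell)$. For $\ell=\ell_i^u$ this requires checking both path orientations: any up-path $\ell_{i'}^u\neq\ell$ satisfies $i'<i$ or $i'>i$, placing it in $H_L(\ell)$ or $H_R(\ell)$ respectively; and for a down-path $\ell_{i'}^d$, the condition $\ell_{i'}^d\cap\ell_i^u=\emptyset$ amounts to $i'\Delta+\delta<i\Delta$ or $i'\Delta>i\Delta+\delta$ (compare with Figure~\ref{fig:intersection_up_down}), which are exactly the two conditions defining membership in $H_L(\ell)$ or $H_R(\ell)$. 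The case $\ell=\ell_i^d$ is symmetric. So once this observation is established, $\ell'\in H_L(\ell)\cup H_R(\ell)$, and hence $\mathcal R_j$ meets $H_L(\ell)$ or $H_R(\ell)$.

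Finally, combining the two cases, either $j\in J_M(J,B(\ell,X))$, or $j\notin J_M(J,B(\ell,X))$ and $\mathcal R_j$ meets at least one of $H_L(\ell)$, $H_R(\ell)$, putting $j$ into $J_L(J,\ell,B(\ell,X))\cup J_R(J,\ell,B(\ell,X))$. Thus $J\subseteq J_M(J,B(\ell,X))\cup J_L(J,\ell,B(\ell,X))\cup J_R(J,\ell,B(\ell,X))$, which is exactly the splitting condition of Definition~\ref{def:splitting_block}. The only subtle point is the geometric dichotomy described above; everything else is a direct unpacking of definitions.
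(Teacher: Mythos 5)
Your proof is correct and follows essentially the same route as the paper's: use feasibility to pick $\ell'\in X\cap\mathcal R_j$ for any job $j\notin J_M(J,B(\ell,X))$, note that $\ell'\cap\ell=\emptyset$ (else $\ell'\in B(\ell,X)$ would force $j\in J_M$), and conclude $\ell'\in H_L(\ell)\cup H_R(\ell)$, hence $j\in J_L(J,\ell,B)\cup J_R(J,\ell,B)$. The only difference is that you explicitly verify the geometric dichotomy that any path disjoint from $\ell$ lies in $H_L(\ell)\cup H_R(\ell)$, which the paper treats as immediate from the definitions of $H_L$ and $H_R$.
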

\begin{proof}
  By feasibility of $X$, for every job in $j\in J\setminus J_M(J,B)$ there is a path $\ell'\in
  X\cap\mathcal R_j$. Now $j\not\in J_M(J,B)$ implies $\ell'\cap\ell=\emptyset$, hence $\ell'\in
  H_L(\ell)\cup H_R(\ell)$, and therefore $j\in J_L(J,\ell,B)\cup J_R(J,\ell,B)$. 
\end{proof}
\begin{definition}\label{def:leading_block}
  A block $(\ell,B)$ is called a \emph{leading} block for job set $J$ if it is splitting and
  $J_L(J,\ell,B)=\emptyset$.
\end{definition}
In other words, $\left(\ell^d_i, B\right)$ is a leading block if $B\cap\mathcal R_j\neq\emptyset$ for every $j$ with the property that
$\mathcal R_j$ does not contain any down-path $\ell^d_{i'}$ with $i'\geq i$.
.
\begin{example}
  In Figure~\ref{fig:assumptions}, $\left(\ell_4^d,B\right)$ is a leading block if $B$ contains
  up-paths intersecting the two jobs left of $\ell^d_4$. As a consequence, $\left(\ell_4^d,\left\{\ell_4^d,\,\ell_3^u,\,\ell_5^u\right\}\right)$ is not
  a leading block, but $\left(\ell_4^d,\left\{\ell_4^d,\,\ell_4^u\right\}\right)$ is.
\end{example}
Our algorithm is based on the following two observations.
\begin{enumerate}
\item Every feasible solution starts with a leading block.
\item If $(\ell,B)$ is a leading block for $\mathcal J$, then $B$ together with an optimal solution
  for the job set $J_R(\mathcal J,\ell,B)$ restricted to the path set $H_R(\ell)$ gives an optimal
  solution $X$ under the additional constraint that $B(\ell,X)=B$.
\end{enumerate}
The first of these observations will be formalized and proved in Lemma~\ref{lem:leading_block},
while the second one is the main idea behind the proof of Lemma~\ref{lem:block_recursion}.

Let $X$ be a feasible solution for $J \in \mc{J}$, and let $i_1=\min\{i\,:\,\ell^u_i\in X\}$,
$i_2=\min\{i\,:\,\ell^d_i\in X\}$, where we use the convention $\min\emptyset=\infty$. Let $\ell(X)$
be the \emph{first path} of $X$, which is defined as 
\[\ell(X)=
  \begin{cases}
    \ell^u_{i_1} &\text{if }i_1\leq i_2,\\
    \ell^d_{i_2} &\text{if }i_1>i_2.
  \end{cases}
\]
\begin{lemma}\label{lem:leading_block}
  Let $J \subseteq \mc{J}$ be a job set, let $X \subseteq \mc{P}$ be a feasible solution for $J$, let
  $\ell=\ell(X)$ be its first path, and let $B=B(\ell,X)$. Then $(\ell,B)$ is a leading block for
  $J$. Moreover, $X \setminus B \subseteq H_R(\ell)$.
\end{lemma}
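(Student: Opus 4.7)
The lemma has two parts: (i)~$X\setminus B\subseteq H_R(\ell)$, and (ii)~$(\ell,B)$ is a leading block for $J$. I would prove (i) first by direct case analysis, and then use (i) together with Assumption~\ref{ass:singleton_possessions} to obtain (ii).

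For (i), consider the case $\ell=\ell^u_{i_1}$ with $i_1\leq i_2$, the case $\ell=\ell^d_{i_2}$ being symmetric. Take any $\ell'\in X\setminus B$; since $\ell'\notin B$ and $\ell'\in X$, we have $\ell'\cap\ell=\emptyset$ and $\ell'\neq\ell$, so $\ell'\in H_L(\ell)\cup H_R(\ell)$. If $\ell'=\ell^u_{i'}$, minimality of $i_1$ forces $i'>i_1$, placing $\ell'$ in $H_R(\ell)$. If $\ell'=\ell^d_{i'}$, then $i'\geq i_2\geq i_1$, which is incompatible with the inequality $i'\Delta+\delta<i_1\Delta$ defining membership in $H_L(\ell^u_{i_1})$, so again $\ell'\in H_R(\ell)$.

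For (ii), Lemma~\ref{lem:splitting_block} already implies that $(\ell,B)$ is splitting, so it suffices to prove $J_L(J,\ell,B)=\emptyset$. Suppose for contradiction that some $j\in J_L(J,\ell,B)$ exists. The definitions yield $\mc{R}_j\cap B=\emptyset$ (in particular $\ell\notin\mc{R}_j$) and some $\ell^{**}\in\mc{R}_j\cap H_L(\ell)$; feasibility of $X$ together with part~(i) produces some $\ell^*\in\mc{R}_j\cap(X\setminus B)\subseteq H_R(\ell)$. To contradict Assumption~\ref{ass:singleton_possessions}, I would parametrise paths by the time $\tau(\ell')$ at which $\ell'$ crosses the location $y_j$ of job $j$, so that $\tau(\ell^u_i)=i\Delta+y_j\delta/L$ and $\tau(\ell^d_i)=i\Delta+(L-y_j)\delta/L$. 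A short calculation shows that every path in $H_L(\ell)$ has $\tau<\tau(\ell)$ and every path in $H_R(\ell)$ has $\tau>\tau(\ell)$. Singleton possession forces $\mc{R}_j$ to be a chronological chain whose consecutive $\tau$-values differ by exactly $p_j$: any larger gap would admit a feasible start time cancelling no path, and any smaller gap would force two simultaneous cancellations. Since $\mc{R}_j$ contains paths with $\tau<\tau(\ell)$ and with $\tau>\tau(\ell)$ but does not contain $\ell$ itself, $\tau(\ell)$ must fall strictly between the $\tau$-values of two consecutive members $\ell_a,\ell_b\in\mc{R}_j$; then the start time $s=\tau(\ell)$ lies in $[r_j,d_j-p_j]$ and the segment $[s,s+p_j]$ at location $y_j$ contains both $\tau(\ell)$ and $\tau(\ell_b)$, so $\ell$ and $\ell_b$ are both cancelled, contradicting singleton possession.

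The main obstacle I anticipate is the rigorous derivation of the chain structure of $\mc{R}_j$ from Assumption~\ref{ass:singleton_possessions}, namely that the intervals $[\tau(\ell')-p_j,\tau(\ell')]$ for $\ell'\in\mc{R}_j$ tile the feasible window $[r_j,d_j-p_j]$ without overlaps or gaps, and the verification that $s=\tau(\ell)$ actually lies in $[r_j,d_j-p_j]$. The latter follows from $\tau(\ell)>\tau(\ell_a)\geq r_j$ and from $\tau(\ell)<d_j-p_j$, the second being forced by the existence of a feasible start time at which $\ell_b$ is the sole cancelled path.
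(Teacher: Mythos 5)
Your part~(i) and your splitting step coincide with the paper's proof: the paper establishes $X\setminus B\subseteq H_R(\ell)$ via the decomposition $X=(X\cap H_L(\ell))\cup B\cup(X\cap H_R(\ell))$ with $X\cap H_L(\ell)=\emptyset$, which is exactly what your case analysis verifies, and it likewise obtains the splitting property from Lemma~\ref{lem:splitting_block}. Where you diverge is the claim $J_L(J,\ell,B)=\emptyset$, and here the paper is much shorter: it has already observed, right after defining $J_L$ and $J_R$, that $J_L(J,\ell,B)\cap J_R(J,\ell,B)=\emptyset$ (because $\mc{R}_j$ meeting both $H_L(\ell)$ and $H_R(\ell)$ forces $\ell\in\mc{R}_j$). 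Hence $j\in J_L(J,\ell,B)$ gives $\mc{R}_j\cap(B\cup H_R(\ell))=\emptyset$, feasibility of $X$ then produces a path of $X$ in $H_L(\ell)$, and the contradiction is with $\ell=\ell(X)$ being the \emph{first path} of $X$; Assumption~\ref{ass:singleton_possessions} is never invoked inside the proof itself.

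The genuine gap is the chain/tiling lemma with which you replace that observation: as stated, it would fail. With the paper's closed occupation interval $[s_j,s_j+p_j]$, exact spacing $p_j$ between consecutive crossing times is self-defeating, since at the touching start time $s=\tau(\ell_a)$ the window $[\tau(\ell_a),\tau(\ell_a)+p_j]$ contains both $\tau(\ell_a)$ and $\tau(\ell_b)=\tau(\ell_a)+p_j$, already a two-element possession; with open windows the same spacing yields an empty possession at $s=\tau(\ell_a)$, so the exact-tiling structure only holds under a half-open convention that the paper never fixes. You also implicitly assume $l^s_j=l^e_j$ (a single crossing time $\tau(\ell')$ per path), which Assumption~\ref{ass:singleton_possessions} does not grant: in general a path occupies the job's location band over a time \emph{interval}. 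Finally, $s=\tau(\ell)$ need not lie in $[r_j,d_j-p_j]$; your data only give $\tau(\ell)<\tau(\ell_b)\leq d_j$, so you would need $s=\min\{\tau(\ell),\,d_j-p_j\}$. None of this machinery is actually needed. From $\tau(\ell_a)<\tau(\ell)<\tau(\ell_b)$ with $\tau(\ell_a),\tau(\ell_b)\in[r_j,d_j]$ you get $\tau(\ell)\in(r_j,d_j)$, hence some feasible start time whose window covers the moment at which $\ell$ crosses the job's location; the corresponding possession contains $\ell$, and by Assumption~\ref{ass:singleton_possessions} it is exactly $\{\ell\}$, so $\ell\in\mc{R}_j$, contradicting $\mc{R}_j\cap B=\emptyset$ since $\ell\in B$. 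This intermediate-value argument is robust to the endpoint conventions and to band-shaped locations, and it is precisely the (unproved) geometric assertion underlying the paper's disjointness claim $J_L\cap J_R=\emptyset$.
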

\begin{proof}
  It follows from Lemma \ref{lem:splitting_block} that $(\ell,B)$ is a splitting block, and it
  remains to be shown that $J_L(J,\ell,B)=\emptyset$. Suppose there exists $j\in
  J_L(J,\ell,B)$. From the feasibility of $X$ and $\mathcal R_j\cap (B\cup H_R(\ell))=\emptyset$ we
  deduce that there exists a path $\ell'\in X\cap H_L(\ell)$, but this contradicts the assumption
  that $\ell=\ell(X)$ is the first path of $X$. The second part follows from
  \[X=(X\cap H_L(\ell))\cup(X\cap\{\ell'\in\mathcal P\,:\,\ell'\cap\ell\neq\emptyset\})\cup(X\cap
    H_R(\ell))=\emptyset\cup B\cup(X\cap
    H_R(\ell)). \qedhere\]
\end{proof}
\begin{example}
  For the solution $X=\{\ell_3^d,\,\ell_4^u,\,\ell_5^u,\,\ell_9^u,\,\ell_{11}^d\}$ in
  Example~\ref{ex:instance}, the first path is $\ell_3^d$ and the corresponding leading block is
  $\left(\ell_3^d,\left\{\ell_3^d,\ell_4^u,\ell_5^u\right\}\right)$.
\end{example}
Lemma \ref{lem:leading_block} implies that for every job set $J \subseteq \mc{J}$ and every feasible
solution $X$ for $J$, there exists a leading block $(\ell,B)$ with $B \subseteq X$ such that every
job in $J$ can be scheduled by cancelling either a path in $B$ or a path in $H_R(\ell)$. In the next
lemma, we prove the transitivity of the relation ``is to the right of'' on the set $\mathcal
P$, which will be used in the proof of Lemma~\ref{lem:split_lead_blocks}.
\begin{lemma}\label{lem:H_R_subset_H_R}
  For every $\ell\in\mathcal P$ and every $\ell'\in H_R(\ell)$, $H_R(\ell') \subseteq H_R(\ell)$.
\end{lemma}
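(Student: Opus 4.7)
The plan is to prove the inclusion by a direct case analysis on the directions (up or down) of $\ell$ and $\ell'$, then, within each case, on the direction of an arbitrary element $\ell''\in H_R(\ell')$. By the symmetry of the definition of $H_R$ under swapping the roles of up- and down-paths, it suffices to treat the two cases in which $\ell$ is an up-path; the remaining two cases are completely analogous. In every subcase, verifying that $\ell''\in H_R(\ell)$ reduces to checking one inequality, which will follow by chaining the inequalities that encode $\ell'\in H_R(\ell)$ and $\ell''\in H_R(\ell')$, together with positivity of $\Delta$ and $\delta$.

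First I would write $\ell=\ell_i^u$ and consider $\ell'=\ell_{i'}^u$ with $i'>i$. If $\ell''=\ell_{i''}^u\in H_R(\ell')$, then $i''>i'>i$, giving $\ell''\in H_R(\ell)$ directly. If instead $\ell''=\ell_{i''}^d\in H_R(\ell')$, then $i''\Delta>i'\Delta+\delta$, and since $i'>i$ implies $i'\Delta>i\Delta$, we obtain $i''\Delta>i\Delta+\delta$, so $\ell''\in H_R(\ell)$.

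Next I would take $\ell'=\ell_{i'}^d$ with $i'\Delta>i\Delta+\delta$. If $\ell''=\ell_{i''}^d\in H_R(\ell')$, then $i''>i'$, so $i''\Delta>i'\Delta>i\Delta+\delta$, hence $\ell''\in H_R(\ell)$. If $\ell''=\ell_{i''}^u\in H_R(\ell')$, then $i''\Delta>i'\Delta+\delta>(i\Delta+\delta)+\delta=i\Delta+2\delta>i\Delta$ (using $\delta>0$), which forces $i''>i$ and therefore $\ell''\in H_R(\ell)$.

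The symmetric pair of cases with $\ell=\ell_i^d$ is handled by interchanging the roles of $u$ and $d$ in the arguments above, producing identical chains of inequalities. The only point requiring a little care is the last subcase, where one must chain two time-inequalities across a switch of direction and invoke $\delta>0$ to convert the time inequality $i''\Delta>i\Delta+2\delta$ into the index inequality $i''>i$. Aside from this minor subtlety, the entire proof is routine bookkeeping, so I would not expect any real obstacle.
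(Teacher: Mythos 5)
Your proof is correct and follows essentially the same route as the paper's: a case analysis on the directions of $\ell$ and $\ell'$, verifying the inclusion in each case by chaining the defining inequalities (including the slightly delicate step $i''\Delta > i'\Delta + \delta > i\Delta + 2\delta$, which the paper's Case~2 and Case~4 inclusions implicitly rely on and which you make explicit). The only difference is cosmetic: you dispatch the two cases with $\ell$ a down-path by the $u$/$d$ symmetry of the definition of $H_R$, whereas the paper writes out all four cases explicitly.
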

\begin{proof}  
  There are four cases regarding the directions of the paths $\ell$ and $\ell'$.
  \begin{description}
  \item[Case 1] Both are up-paths, say $\ell=\ell^u_i$ and $\ell'=\ell^u_{i'}$. Then $\ell'\in
    H_R(\ell)$ implies $i'>i$, and
    \begin{multline*}
      H_R\left(\ell'\right)=\{\ell^u_{i''} \in \mc{P}: i'' > i'\} \cup \{\ell^d_{i''} \in \mc{P}:
      i''\Delta > i'\Delta+\delta \}\\
      \subseteq\{\ell^u_{i''} \in \mc{P}: i'' > i\} \cup \{\ell^d_{i''} \in \mc{P}:  i''\Delta > i\Delta+\delta \}=H_R(\ell).
    \end{multline*}    
  \item[Case 2] $\ell$ is an up-path and $\ell'$ is a down-path, say $\ell=\ell^u_i$ and
    $\ell'=\ell^d_{i'}$. Then $\ell'\in H_R(\ell)$ implies $i'\Delta >i\Delta+\delta$, and
    \begin{multline*}
      H_R\left(\ell'\right)=\{\ell^d_{i''} \in \mc{P}: i'' > i' \} \cup \{\ell^u_{i''} \in \mc{P}:  i''\Delta > i'\Delta +\delta\}\\
      \subseteq\{\ell^u_{i''} \in \mc{P}: i'' > i\} \cup \{\ell^d_{i''} \in \mc{P}:  i''\Delta > i\Delta+\delta \}=H_R(\ell).
    \end{multline*}    
  \item[Case 3] Both are down-paths, say $\ell=\ell^d_i$ and $\ell'=\ell^d_{i'}$. Then $\ell'\in
    H_R(\ell)$ implies $i'>i$, and
    \begin{multline*}
      H_R\left(\ell'\right)=\{\ell^d_{i''} \in \mc{P}: i'' > i'\} \cup \{\ell^u_{i''} \in \mc{P}:
      i''\Delta > i'\Delta+\delta \}\\
      \subseteq\{\ell^d_{i''} \in \mc{P}: i'' > i\} \cup \{\ell^u_{i''} \in \mc{P}:  i''\Delta > i\Delta+\delta \}=H_R(\ell).
    \end{multline*}    
  \item[Case 4] $\ell$ is a down-path and $\ell'$ is an up-path, say $\ell=\ell^d_i$ and
    $\ell'=\ell^u_{i'}$. Then $\ell'\in H_R(\ell)$ implies $i'\Delta >i\Delta+\delta$, and
    \begin{multline*}
      H_R\left(\ell'\right)=\{\ell^u_{i''} \in \mc{P}: i'' > i' \} \cup \{\ell^d_{i''} \in \mc{P}:  i''\Delta > i'\Delta +\delta\}\\
      \subseteq\{\ell^d_{i''} \in \mc{P}: i'' > i\} \cup \{\ell^u_{i''} \in \mc{P}:  i''\Delta > i\Delta+\delta \}=H_R(\ell).\qedhere
    \end{multline*}    
  \end{description}
\end{proof}

\begin{lemma}\label{lem:split_lead_blocks}
  Let $(\ell,B)$ be a block with $J:=J_R(\mc{J},\ell,B)\neq\emptyset$, and let $(\ell',B')$ be a
  leading block for~$J$. Then $ J_R(J,\ell',B')=J_R(\mc{J},\ell',B')$. 
\end{lemma}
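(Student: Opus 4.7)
The plan is to establish both inclusions. The direction $J_R(J,\ell',B')\subseteq J_R(\mc{J},\ell',B')$ is immediate from $J\subseteq\mc{J}$ together with the observation that the membership conditions $\mc{R}_j\cap B'=\emptyset$ and $\mc{R}_j\cap H_R(\ell')\neq\emptyset$ depend only on the individual job $j$; the identity $J_M(J,B')=J\cap J_M(\mc{J},B')$ gives $J\setminus J_M(J,B')\subseteq\mc{J}\setminus J_M(\mc{J},B')$. For the reverse inclusion, fix $j\in J_R(\mc{J},\ell',B')$ together with a witness path $\ell_j\in\mc{R}_j\cap H_R(\ell')$; the task reduces to showing $j\in J$, that is, $\mc{R}_j\cap B=\emptyset$ and $\mc{R}_j\cap H_R(\ell)\neq\emptyset$.

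The pivotal step is to locate $\ell'$ relative to $\ell$ and rule out $\ell'\in H_L(\ell)$. Because $J$ is nonempty, pick $j_0\in J$; we have $\mc{R}_{j_0}\cap B=\emptyset$ and some $\ell_0\in\mc{R}_{j_0}\cap H_R(\ell)$ exists. Since $(\ell',B')$ is leading for $J$, $j_0$ satisfies either $\mc{R}_{j_0}\cap B'\neq\emptyset$ or $\mc{R}_{j_0}\cap H_L(\ell')=\emptyset$. Assume for contradiction that $\ell'\in H_L(\ell)$. Then $\ell\in H_R(\ell')$ and Lemma~\ref{lem:H_R_subset_H_R} yields $H_R(\ell)\subseteq H_R(\ell')$, so $\ell_0\in H_R(\ell')$. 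A case analysis over the up/down types of $\ell$ and $\ell'$, in the spirit of the proof of Lemma~\ref{lem:H_R_subset_H_R} and invoking the chronologically consecutive structure of $\mc{R}_{j_0}$ granted by Assumption~\ref{ass:singleton_possessions}, then contradicts either the block structure of $(\ell,B)$ or the leading-block hypothesis on $(\ell',B')$, forcing $\ell'\notin H_L(\ell)$.

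With $\ell'\in H_R(\ell)\cup\{\ell''\in\mc{P}\,:\,\ell''\cap\ell\neq\emptyset\}$ in hand, Lemma~\ref{lem:H_R_subset_H_R} gives $H_R(\ell')\subseteq H_R(\ell)$ in the generic case $\ell'\in H_R(\ell)$, while the boundary case $\ell'\cap\ell\neq\emptyset$ is handled by a direct unpacking of the definitions of $H_R$. In particular $\ell_j\in H_R(\ell)$, so $\mc{R}_j\cap H_R(\ell)\neq\emptyset$. For $\mc{R}_j\cap B=\emptyset$, observe that any $\ell^\ast\in\mc{R}_j\cap B$ intersects $\ell$ by the block property, and therefore lies outside $H_R(\ell)\supseteq H_R(\ell')$; because $\mc{R}_j$ is chronologically consecutive under Assumption~\ref{ass:singleton_possessions}, the simultaneous presence of $\ell^\ast$ and $\ell_j$ in $\mc{R}_j$ forces some intermediate path of $\mc{R}_j$ to land in $B'$ or in $H_L(\ell')$, contradicting $\mc{R}_j\cap B'=\emptyset$ together with the positional conclusion just established. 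The principal difficulty throughout is the positional case analysis ruling out $\ell'\in H_L(\ell)$; everything else is primarily set-theoretic bookkeeping built on Lemma~\ref{lem:H_R_subset_H_R}.
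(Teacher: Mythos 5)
Your reduction of the hard inclusion to showing, for each $j\in J_R(\mc{J},\ell',B')$, that $\mc{R}_j\cap B=\emptyset$ and $\mc{R}_j\cap H_R(\ell)\neq\emptyset$ is the right bookkeeping, and your easy inclusion is fine; but both of your load-bearing steps fail. The pivotal step --- ruling out $\ell'\in H_L(\ell)$ --- is not just unproved but false: Definition~\ref{def:leading_block} imposes no positional relation whatsoever between $(\ell',B')$ and $(\ell,B)$, because the leading-block property only constrains the jobs of $J$. For instance, with $\ell=\ell^u_5$, $B=\{\ell^u_5\}$ and $J=\{j_0\}$ where $\mc{R}_{j_0}=\{\ell^u_6\}$, the pair $(\ell^u_1,\{\ell^u_1\})$ is a leading block for $J$ (we have $\mc{R}_{j_0}\cap H_L(\ell^u_1)=\emptyset$ and $\ell^u_6\in H_R(\ell^u_1)$), yet $\ell^u_1\in H_L(\ell^u_5)$. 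So the promised case analysis cannot end in a contradiction. The paper does not derive this positioning either: its proof simply invokes Lemma~\ref{lem:H_R_subset_H_R}, which presupposes $\ell'\in H_R(\ell)$; that hypothesis is available only in the lemma's application inside the proof of Lemma~\ref{lem:block_recursion}, where $(\ell',B')\in F(\ell,B)$ forces $\ell'\in B'\subseteq\mc{P}^*_R(\ell,B)\subseteq H_R(\ell)$. It has to be imported from that context, not proved from the lemma's stated hypotheses.

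Your argument for $\mc{R}_j\cap B=\emptyset$ is also unsound, for two reasons. An intermediate path of $\mc{R}_j$ landing in $H_L(\ell')$ contradicts nothing: membership in $J_R(\mc{J},\ell',B')$ places no restriction on $\mc{R}_j\cap H_L(\ell')$, and you cannot apply the leading-block property to $j$, since that property speaks only about jobs of $J$ --- whether $j\in J$ is exactly what you are trying to prove, so the appeal is circular. Moreover, an intermediate path crossing $\ell'$ need not lie in $B'$, because $B'$ is an arbitrary subset of the paths meeting $\ell'$. In fact this half of the claim cannot be closed from the stated hypotheses even after granting $\ell'\in H_R(\ell)$: with $\Delta=1$ and $\delta=10$, take $\ell=\ell^u_0$, $B=\{\ell^u_0,\ell^d_9\}$, jobs $j_2,j_3$ with $\mc{R}_{j_2}=\{\ell^u_2\}$ and $\mc{R}_{j_3}=\{\ell^u_1\}$ (so $J=\{j_2,j_3\}$), and a job $j$ located near $0$ whose window yields the chronologically consecutive set $\mc{R}_j=\{\ell^d_9,\ell^u_{19},\ell^d_{10},\ell^u_{20}\}$. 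Then $(\ell',B')=(\ell^u_1,\{\ell^u_1\})$ is a leading block for $J$ with $\ell'\in H_R(\ell)$, but $\mc{R}_j$ meets $B$ (at $\ell^d_9$) while avoiding $B'$ and reaching into $H_R(\ell')$ (at $\ell^u_{19}$), so $j\in J_R(\mc{J},\ell',B')\setminus J$ and $J_R(J,\ell',B')=\{j_2\}\neq\{j_2,j\}=J_R(\mc{J},\ell',B')$. You should know that the paper's own two-line proof is silent on precisely this point: Lemma~\ref{lem:H_R_subset_H_R} delivers only $\mc{R}_j\cap H_R(\ell)\neq\emptyset$, never $\mc{R}_j\cap B=\emptyset$. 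Your instinct that these two facts are the crux of the lemma was correct, but neither can be established as written; the statement needs the additional hypotheses under which it is actually applied.
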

\begin{proof}
  Lemma~\ref{lem:H_R_subset_H_R} implies $J_R(\mc{J},\ell',B')\subseteq J_R(\mc{J},\ell,B)=J$, and
  then
  \begin{multline*}
    J_R(\mc{J},\ell',B')=J_R(\mc{J},\ell',B')\cap J= \{j\in\mathcal J\setminus J_M(\mathcal J,B)\,:\,\mathcal R_j\cap
    H_R(\ell)\neq\emptyset\}\cap J\\
    = \{j\in J\setminus J_M(J,B)\,:\,\mathcal R_j\cap  H_R(\ell)\neq\emptyset\} = J_R(J,\ell',B').\qedhere
  \end{multline*}
\end{proof}
For $J \subseteq \mc{J}$, let $ \mc{P}^*(J) = \bigcup_{j \in J}\mc{R}^*_j $, and for a block
$(\ell,B)$, let $\mathcal P^*_R(\ell,B)=\mc{P}^*(J_R(\mc{J},\ell,B)) \cap H_R(\ell)$. The following lemma can
be proved by the same shifting argument as Lemma~\ref{lem:interesting_paths}.

\begin{lemma}\label{lem:interesting_paths_extension}
  For $\ell \in \mc{P}$, let $J \subseteq\{j\in\mathcal J\,:\,\mc{R}_j\cap H_R(\ell) \neq
  \emptyset\}$, and let
  \[\mu=\min\{\lvert X\rvert\,:\,X\subseteq H_R(\ell),\,X\cap\mathcal
    R_j\neq\emptyset\text{ for all }j\in J\}.\]
  There exists $X\subseteq \mathcal P^*_R(\ell,B)$ with $\lvert X\rvert=\mu$ and $X\cap\mathcal
    R_j\neq\emptyset$ for all $j\in J$. \hfill$\qed$
\end{lemma}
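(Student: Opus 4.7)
The plan is to mimic the proof of Lemma~\ref{lem:interesting_paths} by a shifting argument, with the extra requirement that the shifted paths remain inside $H_R(\ell)$. Choose a feasible $X\subseteq H_R(\ell)$ for $J$ with $\lvert X\rvert=\mu$ that minimizes $\lvert X\setminus\mathcal P^*_R(\ell,B)\rvert$, and argue by contradiction that this minimum is positive.

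Suppose some up-path $\ell^u_i$ lies in $X\setminus\mathcal P^*_R(\ell,B)$. Set $X'=X\setminus\{\ell^u_i\}$ and $J'=\{j\in J\,:\,\mathcal R_j\cap X'=\emptyset\}$. Feasibility of $X$ gives $\ell^u_i\in\mathcal R_j$ for every $j\in J'$, and since $\ell^u_i\in H_R(\ell)$ but $\ell^u_i\notin\mathcal P^*_R(\ell,B)$, the path $\ell^u_i$ is not the last up-path of any of these $\mathcal R_j$. The consecutive-interval structure of the up-paths in each $\mathcal R_j$ then forces $\ell^u_{i+1}\in\mathcal R_j$ for every $j\in J'$, exactly as in Lemma~\ref{lem:interesting_paths}. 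Iterating, set $i'=\max\{i''\,:\,\ell^u_{i''}\in\mathcal R_j\text{ for all }j\in J'\}$. Maximality of $i'$ forces some $j_0\in J'$ to have $\ell^u_{i'}$ as the last up-path of $\mathcal R_{j_0}$, so $\ell^u_{i'}\in\mathcal P^*$; and applying Lemma~\ref{lem:H_R_subset_H_R} to $\ell^u_i\in H_R(\ell)$ (together with $i'\geq i$) gives $\ell^u_{i'}\in H_R(\ell)$, hence $\ell^u_{i'}\in\mathcal P^*_R(\ell,B)$. Replacing $\ell^u_i$ by $\ell^u_{i'}$ in $X$ yields a feasible $X''\subseteq H_R(\ell)$ with $\lvert X''\rvert=\mu$ and $\lvert X''\setminus\mathcal P^*_R(\ell,B)\rvert<\lvert X\setminus\mathcal P^*_R(\ell,B)\rvert$, contradicting minimality. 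The case of a down-path $\ell^d_i\in X\setminus\mathcal P^*_R(\ell,B)$ is handled by a symmetric argument.

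The only step I anticipate needing care is the verification that the shifted path stays inside $H_R(\ell)$, since the original proof of Lemma~\ref{lem:interesting_paths} had no half-plane constraint to respect. This is precisely the role played by Lemma~\ref{lem:H_R_subset_H_R}: once a path $\ell^u_i\in H_R(\ell)$ is fixed, every up-path further to the right lies in $H_R(\ell^u_i)\subseteq H_R(\ell)$, and symmetrically for down-paths. With this transitivity of ``to the right of'' in place, the remaining bookkeeping reduces to the routine shift-to-the-right argument from the earlier lemma.
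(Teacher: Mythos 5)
Your proof is correct and takes essentially the same approach as the paper: the paper gives no written-out proof, stating only that the lemma ``can be proved by the same shifting argument as Lemma~\ref{lem:interesting_paths}'', and your argument is precisely that shift-to-the-right-of-the-last-path argument. You also supply the one genuinely new ingredient the paper leaves implicit---checking that the replacement path $\ell^u_{i'}$ remains in $H_R(\ell)$, which you correctly obtain from the transitivity established in Lemma~\ref{lem:H_R_subset_H_R}---so your write-up is, if anything, more complete than the paper's.
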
 
For a block $(\ell,B)$ let
\[\varphi(\ell,B)=\min\{\lvert X\rvert\,:\,X\subseteq H_R(\ell),\,X\cap\mathcal
    R_j\neq\emptyset\text{ for all }j\in J_R(\mathcal J,\ell,B)\}.\]
In particular, $\varphi(0,\emptyset)$ is the optimal objective value for the instance $(\mathcal
J,\mathcal P)$ of the maintenance scheduling problem.
\begin{lemma}\label{lem:block_recursion}
  The function $\varphi$ satisfies the recursion
  \begin{equation}\label{eq:block_recursion:recursion}
    \varphi(\ell,B)=
    \begin{cases}
      \min\{|B'|+\varphi(\ell',B'): (\ell',B') \in F(\ell,B)\}   &\text{if } J_R(\mc{J},\ell,B)\neq \emptyset \\
      0 &\text{if } J_R(\mc{J},\ell,B)=\emptyset
    \end{cases} 
  \end{equation}
  where $F(\ell,B)=\{(\ell',B')\,:\,B' \subseteq \mc{P}^*_R(\ell,B),\, (\ell',B') \text{ is a leading block for }J_R(\mc{J},\ell,B)\}$.
\end{lemma}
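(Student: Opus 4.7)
The plan is to prove the recursion by verifying both inequalities after dispensing with the trivial base case, using the structural lemmas already established in the section.

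For the base case, when $J_R(\mathcal J,\ell,B)=\emptyset$, the empty set is trivially a feasible choice of $X$ in the definition of $\varphi(\ell,B)$, so $\varphi(\ell,B)=0$. The interesting part is the recursive case, for which I would prove the two inequalities separately.

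For the direction $\varphi(\ell,B)\le \min\{|B'|+\varphi(\ell',B'): (\ell',B')\in F(\ell,B)\}$, I would fix any $(\ell',B')\in F(\ell,B)$ and let $X'\subseteq H_R(\ell')$ be an optimal solution realizing $\varphi(\ell',B')$ with respect to the job set $J_R(\mathcal J,\ell',B')$. The candidate solution for $\varphi(\ell,B)$ is $X:=B'\cup X'$. Its containment in $H_R(\ell)$ follows from $B'\subseteq \mathcal P^*_R(\ell,B)\subseteq H_R(\ell)$ together with Lemma~\ref{lem:H_R_subset_H_R}, which gives $X'\subseteq H_R(\ell')\subseteq H_R(\ell)$. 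For feasibility, take any $j\in J_R(\mathcal J,\ell,B)$: either $j\in J_M(J_R(\mathcal J,\ell,B),B')$ and we are done, or $(\ell',B')$ being leading for $J_R(\mathcal J,\ell,B)$ forces $j\in J_R(J_R(\mathcal J,\ell,B),\ell',B')$; by Lemma~\ref{lem:split_lead_blocks} the latter equals $J_R(\mathcal J,\ell',B')$, so $X'$ covers $j$. Also $B'\cap X'=\emptyset$ because $X'\subseteq H_R(\ell')$ is disjoint from every path meeting $\ell'$, so $|X|=|B'|+|X'|=|B'|+\varphi(\ell',B')$, as needed.

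For the reverse direction, take an optimal $X$ for the problem defining $\varphi(\ell,B)$. By Lemma~\ref{lem:interesting_paths_extension} applied with $\ell$ and $J=J_R(\mathcal J,\ell,B)$, I may assume $X\subseteq \mathcal P^*_R(\ell,B)$. If $X=\emptyset$ then $J_R(\mathcal J,\ell,B)=\emptyset$ and we are in the base case, so assume $X\neq\emptyset$. Let $\ell'=\ell(X)$ and $B'=B(\ell',X)$. Lemma~\ref{lem:leading_block}, applied to the job set $J_R(\mathcal J,\ell,B)$, tells us that $(\ell',B')$ is leading for this job set and that $X\setminus B'\subseteq H_R(\ell')$; together with $B'\subseteq X\subseteq \mathcal P^*_R(\ell,B)$ this shows $(\ell',B')\in F(\ell,B)$. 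Now $X\setminus B'$ covers every job in $J_R(\mathcal J,\ell,B)\setminus J_M(J_R(\mathcal J,\ell,B),B')$; since $(\ell',B')$ is leading, those leftover jobs form a subset of $J_R(J_R(\mathcal J,\ell,B),\ell',B')=J_R(\mathcal J,\ell',B')$ by Lemma~\ref{lem:split_lead_blocks}, so $X\setminus B'$ is feasible in the problem defining $\varphi(\ell',B')$. Hence $|X\setminus B'|\ge \varphi(\ell',B')$, giving $\varphi(\ell,B)=|X|\ge |B'|+\varphi(\ell',B')$.

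The main obstacle, in my view, is not any single inequality but keeping the job bookkeeping straight: one must repeatedly invoke Lemma~\ref{lem:split_lead_blocks} to identify $J_R(J_R(\mathcal J,\ell,B),\ell',B')$ with $J_R(\mathcal J,\ell',B')$, and must verify disjointness $B'\cap X'=\emptyset$ in the upper bound direction so that cardinalities add. A secondary care point is ensuring the assumption $X\subseteq \mathcal P^*_R(\ell,B)$ from Lemma~\ref{lem:interesting_paths_extension} is legitimate before extracting the first path, because without it $\ell(X)$ and $B(\ell(X),X)$ might not sit inside $\mathcal P^*_R(\ell,B)$, and the candidate pair would fail the $F(\ell,B)$ membership test.
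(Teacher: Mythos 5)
Your proof is correct and takes essentially the same route as the paper's own: both directions hinge on Lemma~\ref{lem:interesting_paths_extension} to restrict the optimal solution to $\mathcal P^*_R(\ell,B)$, Lemma~\ref{lem:leading_block} to extract a leading block from the first path of $X$, and Lemma~\ref{lem:split_lead_blocks} to identify $J_R(J,\ell',B')$ with $J_R(\mathcal J,\ell',B')$. Your explicit checks of $B'\cap X'=\emptyset$ and of $H_R(\ell')\subseteq H_R(\ell)$ via Lemma~\ref{lem:H_R_subset_H_R} merely spell out steps the paper leaves implicit.
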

\begin{proof}
  Let $J=J_R(\mc{J},\ell,B)$. For $J=\emptyset$, \eqref{eq:block_recursion:recursion} is
  trivial. For $J\neq\emptyset$, Lemma~\ref{lem:interesting_paths_extension} yields the existence of
  a set $X\subseteq \mathcal P^*_R(\ell,B)$ with $\lvert X\rvert=\varphi(\ell,B)$ and
  $X\cap\mathcal R_j\neq\emptyset$ for all $j\in J_R(\mc{J},\ell,B)$. Let $\ell^*=\ell(X)$ be the
  first path of $X$, and let $B^*=B(\ell^*,X)$ be the set of paths in $X$ that intersect
  $\ell^*$. By Lemma~\ref{lem:leading_block} and $X\subseteq \mathcal P^*_R(\ell,B)$, we have
  $(\ell^*,B^*)\in F(\ell,B)$. Moreover, $\lvert X\setminus B'\rvert\geq\varphi(\ell^*, B^*)$,
  because $X\setminus B^*\subseteq H_R(\ell^*)$ and $(X\setminus B^*)\cap\mathcal R_j\neq\emptyset$
  for all $j\in J_R(J,\ell^*,B^*)=J_R(\mathcal J,\ell^*,B^*)$, where the last equality follows from
  Lemma~\ref{lem:split_lead_blocks}. As a consequence,
  \[\varphi(\ell,B)= \lvert X\rvert=\lvert B^*\rvert+\lvert X\setminus B^*\rvert \geq \lvert
    B^*\rvert+\varphi(\ell^*,B^*) \geq \min\{\lvert B'\rvert +\varphi(\ell',B')\,:\,(\ell',B') \in
    F(\ell,B)\}.\]
  For the converse inequality, let $(\ell^*,B^*)\in F(\ell,B)$ be a minimizer for the right hand side
  of~\eqref{eq:block_recursion:recursion}. In particular $(\ell^*,B^*)$ is a leading block for $J$,
  hence, using Lemma~\ref{lem:split_lead_blocks} again,
  \[J=J_M(J,B^*)\cup J_R(J,\ell^*,B^*)=J_M(J,B^*)\cup J_R(\mathcal J,\ell^*,B^*).\]
  By the definition of $\varphi(\ell^*,B^*)$, there exists a set $X'\subseteq H_R(\ell^*)$ with $\lvert
  X'\rvert=\varphi(\ell^*,B^*)$ and $X\cap\mathcal R_j\neq\emptyset$ for all $j\in J_R(\mathcal
  J,\ell^*,B^*)$. Then $X=B^*\cup X'\subseteq H_R(\ell)$ satisfies $X\cap\mathcal R_j\neq\emptyset$ for
  all $j\in J$, hence
  \[\varphi(\ell,B)\leq\lvert X\rvert=\lvert B^*\rvert+\lvert X'\rvert=\lvert B^*\rvert+\varphi(\ell^*,B^*)=\min\{\lvert B'\rvert +\varphi(\ell',B')\,:\,(\ell',B') \in
    F(\ell,B)\}.\qedhere\]
\end{proof}
The minimum number of cancelled paths for the problem can be obtained by finding
$\varphi(0,\emptyset)$ using~\eqref{eq:block_recursion:recursion}. As a consequence, we can
formulate the problem as a shortest problem on the directed acyclic graph $G=(V,A)$ with node set
$V=V_0\cup\{O,D\}$, where $V_0=\{(\ell,B)\,:\,B \subseteq \mc{P}^*, (\ell,B) \text{ is a block} \}$,
and arc set $A = A_O\cup A_M \cup A_D$ where
\begin{align*}
  A_O &= \{(O,(\ell,B))\,:\,(\ell,B) \in F(0,\emptyset)\}, \\
  A_M &= \{((\ell,B),(\ell',B'))\,:\,(\ell,B) \in V_0,\,(\ell',B')\in F(\ell,B)\}, \\  
  A_D &= \{((\ell,B),D)\,:\, (\ell,B) \in V_0,\, J_R(\mc{J},\ell,B)= \emptyset\}.
\end{align*}
To each arc $(*,(\ell',B')) \in A_O \cup A_M$, we assign cost $\lvert B'\rvert$, while all arcs in
$A_D$ have cost zero. The following is an immediate consequence of Lemma~\ref{lem:block_recursion}.
\begin{corollary}\label{cor:shortest_path_formulation}
  The optimal objective value for the maintenance scheduling problem equals the length of a shortest
  path from node $O$ to node $D$ in $G=(V,A)$.\hfill$\qed$
\end{corollary}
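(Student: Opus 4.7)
The plan is to read Corollary~\ref{cor:shortest_path_formulation} as a direct translation of the recursion in Lemma~\ref{lem:block_recursion} into a shortest-path computation on the DAG $G=(V,A)$. Because $\varphi(0,\emptyset)$ is, by definition, the optimal objective value of the maintenance scheduling problem, it suffices to show that for every node $(\ell,B)\in V_0$, the value $\varphi(\ell,B)$ equals the length of a shortest directed path from $(\ell,B)$ to the sink $D$ in $G$. Then, applying the same identity at the source $O$ and using the definition of $A_O$ together with~\eqref{eq:block_recursion:recursion} at $(0,\emptyset)$ yields the claim.

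To establish that identity I would induct on the DAG in reverse topological order. For the base case, a node $(\ell,B)\in V_0$ with $J_R(\mc{J},\ell,B)=\emptyset$ has the zero-cost arc $((\ell,B),D)\in A_D$, so the shortest path length from $(\ell,B)$ to $D$ is $0=\varphi(\ell,B)$ by~\eqref{eq:block_recursion:recursion}. For the inductive step, suppose $(\ell,B)\in V_0$ with $J_R(\mc{J},\ell,B)\neq\emptyset$, and assume the claim holds for every $(\ell',B')\in F(\ell,B)$. Since the outgoing arcs from $(\ell,B)$ are precisely the arcs $((\ell,B),(\ell',B'))$ with $(\ell',B')\in F(\ell,B)$, each carrying cost $|B'|$, the shortest path length from $(\ell,B)$ to $D$ equals $\min\{|B'|+\varphi(\ell',B')\,:\,(\ell',B')\in F(\ell,B)\}$, which equals $\varphi(\ell,B)$ by Lemma~\ref{lem:block_recursion}.

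It then remains to handle the source. The only arcs out of $O$ are the arcs $(O,(\ell',B'))$ with $(\ell',B')\in F(0,\emptyset)$, each of cost $|B'|$. Hence the length of a shortest $O$-$D$ path is $\min\{|B'|+\varphi(\ell',B')\,:\,(\ell',B')\in F(0,\emptyset)\}$, which by Lemma~\ref{lem:block_recursion} applied at $(0,\emptyset)$ (with $H_R(0)=\mathcal P$ and hence $J_R(\mc{J},0,\emptyset)=\mathcal J\neq\emptyset$ in any non-trivial instance) equals $\varphi(0,\emptyset)$, the optimal objective value. Acyclicity of $G$, which is needed to justify the reverse topological induction, follows from the observation that each arc strictly advances the ``first path'' of the partial solution in the chronological/left-to-right order induced by $H_R$, together with Lemma~\ref{lem:H_R_subset_H_R}.

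The only mild obstacle I anticipate is the bookkeeping at the source: one must check that the arcs in $A_O$ are exactly the terms in the outermost minimum of the recursion, i.e.\ that $F(0,\emptyset)$ is the right set of leading blocks to initialize a solution, and that the restriction $B'\subseteq\mathcal P^*$ built into the definition of $V_0$ does not exclude any minimizer. Both points are already guaranteed by Lemma~\ref{lem:interesting_paths_extension}, which licences restricting to $\mathcal P^*_R(\ell,B)\subseteq\mathcal P^*$ in~\eqref{eq:block_recursion:recursion} without loss of optimality; so no further argument is required and the corollary follows.
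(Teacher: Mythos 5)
Your proposal is correct and matches the paper's reasoning: the paper presents the corollary as an immediate consequence of Lemma~\ref{lem:block_recursion}, and your reverse-topological induction on the DAG (base case at nodes with $J_R(\mc{J},\ell,B)=\emptyset$ via the zero-cost arcs in $A_D$, inductive step via the arcs to $F(\ell,B)$ with costs $\lvert B'\rvert$, and the source handled through $A_O$ and the recursion at $(0,\emptyset)$) is exactly the standard unfolding of that recursion into a shortest-path computation. Your acyclicity argument via Lemma~\ref{lem:H_R_subset_H_R} and the observation that $\ell'\in B'\subseteq H_R(\ell)$ for every arc is also sound, so no gap remains.
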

The node set can be reduced further by the following two observations:
\begin{enumerate}
\item Without loss of generality, we can assume that if we have an optimal solution $X$ with leading
  block $\left(\ell,B\right)$ where $\ell$ is the first path of $X$. As a consequence, we only need
  to consider leading blocks $\left(\ell_i^d,B\right)$ with $i'>i$ for all $\ell^u_{i'}\in B$ and
  leading blocks $\left(\ell_i^u,B\right)$ with $i'\geq i$ for all $\ell^d_{i'}\in B$.
\item From any two blocks $(\ell,B)$ and $(\ell,B')$ with $J_M(J,B)=J_M(J,B')$ we need to include
  only one, and we can choose the smaller among the sets $B$ and $B'$ if their sizes differ.
\end{enumerate}
\begin{example}\label{ex:first_steps}
  For the instance in Figure~\ref{fig:assumptions}, there are four possible first paths to consider:
  $\ell_1^d$, $\ell_3^d$, $\ell_3^u$ and $\ell_4^u$. In Table~\ref{tab:first_steps} we list the corresponding
  ten out-neighbours of node $O$, together with the corresponding set of remaining jobs
  $J_R(\mathcal J,\ell,B)$.
  \begin{table}[htb]
    \centering
        \caption{The out-neighbours of node $O$ and the corresponding sets
      $\mathcal J_R(\mathcal J,\ell,B)$.}
    \label{tab:first_steps}
    \begin{tabular}{ll}\toprule
      leading block $(\ell,B)$ & $J_R(\mathcal J,\ell,B)$ \\\midrule
  $\left(\ell_1^d,\left\{\ell_1^d\right\}\right)$ & $\{2,3,4,5,6,7,8\}$\\
  $\left(\ell_1^d,\left\{\ell_1^d,\,\ell^u_5\right\}\right)$ & $\{3,6,7,8\}$\\
      $\left(\ell_3^d,\left\{\ell_3^d,\,\ell^u_4\right\}\right)$ & $\{4,6,7,8\}$\\
  $\left(\ell_3^d,\left\{\ell_3^d,\,\ell^u_4,\,\ell^u_5\right\}\right)$ & $\{6,7,8\}$\\
  $\left(\ell_3^u,\left\{\ell_3^u\right\}\right)$ & $\{1,2,4,6,7,8\}$\\
  $\left(\ell_3^u,\left\{\ell_3^u,\,\ell_3^d\right\}\right)$ & $\{1,4,6,7,8\}$\\
  $\left(\ell_3^u,\left\{\ell_3^u,\,\ell_5^d\right\}\right)$ & $\{1,2,6,7,8\}$\\ 
  $\left(\ell_3^u,\left\{\ell_3^u,\,\ell_3^d,\,\ell_5^d\right\}\right)$ & $\{1,6,7,8\}$\\  
  $\left(\ell_4^u,\left\{\ell_4^u,\,\ell_5^d\right\}\right)$ & $\{6,7,8\}$\\
  $\left(\ell_4^u,\left\{\ell_4^u,\,\ell_5^d,\,\ell_6^d\right\}\right)$ & $\{7,8\}$\\ \bottomrule
    \end{tabular}
  \end{table}
  Figure~\ref{fig:sp_network} shows three paths in the network including a shortest path which
  corresponds to an optimal solution for the instance.
  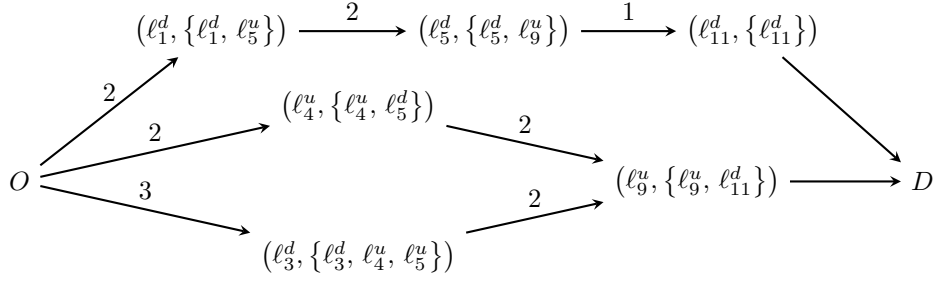
\begin{figure}[htb]
    \centering
    \begin{tikzpicture}[xscale=1.5]
      \node (O) at (0,0) {{\small $O$}};
      \node (D) at (8,0) {{\small $D$}};
      \node (v1) at (3,-1) {{\small
          $\left(\ell_3^d,\left\{\ell_3^d,\,\ell^u_4,\,\ell^u_5\right\}\right)$}};
      \node (v2) at (3,1) {{\small
          $\left(\ell_4^u,\left\{\ell_4^u,\,\ell^d_5\right\}\right)$}};
      \node (v3) at (1.7,2) {{\small
          $\left(\ell_1^d,\left\{\ell_1^d,\,\ell^u_5\right\}\right)$}};
      \node (w1) at (6,0) {{\small
          $\left(\ell_9^u,\left\{\ell_9^u,\,\ell^d_{11}\right\}\right)$}};
      \node (w2) at (4.2,2) {{\small
          $\left(\ell_5^d,\left\{\ell_5^d,\,\ell^u_9\right\}\right)$}};
      \node (x) at (6.5,2) {{\small
          $\left(\ell_{11}^d,\left\{\ell_{11}^d\right\}\right)$}};
      \draw[thick,->] (O) to node[above] {{\small $3$}} (v1);
      \draw[thick,->] (O) to node[above] {{\small $2$}} (v2);
      \draw[thick,->] (O) to node[above] {{\small $2$}} (v3);
      \draw[thick,->] (v2) to node[above] {{\small $2$}} (w1);
      \draw[thick,->] (v3) to node[above] {{\small $2$}} (w2);
      \draw[thick,->] (w2) to node[above] {{\small $1$}} (x);
      \draw[thick,->] (v1) to node[above] {{\small $2$}} (w1);
      \draw[thick,->] (w1) to (D);
      \draw[thick,->] (x) to (D);
    \end{tikzpicture}
    \caption{Three paths in the digraph for the instance in
      Figure~\ref{fig:assumptions}. These paths correspond to the solutions
      $X_1=\left\{\ell_1^d,\,\ell^u_5,\,\ell_5^d,\,\ell^u_{9},\ell^d_{11}\right\}$,
      $X_2=\left\{\ell_4^u,\,\ell^d_5,\,\ell_9^u,\,\ell^d_{11}\right\}$ and
      $X_3=\{\ell_3^d,\,\ell_4^u,\,\ell_5^u,\,\ell_9^u,\,\ell_{11}^d\}$, and $X_2$ is an optimal
      solution.}\label{fig:sp_network}
  \end{figure}

\end{example}

In the next lemma, we show that a block $(\ell',B')\in F(\ell,B)$ cannot be too far away from the
first path of $\mathcal P^*_R(\ell,B)$. For this purpose, we define a set $C(\ell)\subseteq\mathcal
P$, for every $\ell\in\mathcal P$ by 
\[C(\ell)=\begin{cases}
      \{\ell^d_i,\ell^d_{i+1},\dots,\ell^d_{i_1}\}\cup\{\ell^u_i,\ell^u_{i+1},\dots,\ell^u_{i_2}\}
      &\text{if }\ell=\ell^u_i,\\
      \{\ell^u_i,\ell^u_{i+1},\dots,\ell^u_{i_1}\}\cup\{\ell^d_i,\ell^d_{i+1},\dots,\ell^d_{i_2}\}
      &\text{if }\ell=\ell^d_i
    \end{cases}\]
  where $i_1=i_1(i)=\min\{k\,:\,k\Delta>i\Delta+\delta\}$ and $i_2=i_2(i)=\max\{k\,:\,k\Delta\leq
  i_1\Delta+\delta\}$, and note that $\lvert C(\ell)\rvert\leq 3(\delta/\Delta+1)=O(1)$.
\begin{lemma}\label{lem:leading_blocks}
  Let $(\ell,B)\in\{(0,\emptyset)\}\cup V_0$ with $J=J_R(\mc{J},\ell,B)\neq\emptyset$, and let $\ell^*=\ell(\mathcal
  P^*_R(\ell,B))$ be the first path of $\mathcal P^*_R(\ell,B)$. If $(\ell',B')$ is a leading block
  for $J$ then $\ell'\in C(\ell^*)$. In particular, $\lvert F(\ell,B)\rvert=O(1)$.
\end{lemma}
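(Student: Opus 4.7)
My plan is to argue by contradiction. Assuming $(\ell',B')$ is a leading block for $J$ (with $B' \subseteq \mathcal P^*_R(\ell,B)$, which is the natural setting since the lemma is used within the definition of $F(\ell,B)$) but $\ell' \notin C(\ell^*)$, I would exhibit a job $j^* \in J_L(J,\ell',B')$, contradicting $J_L(J,\ell',B')=\emptyset$. The witness $j^*$ is any job in $J$ with $\ell^* \in \mathcal R^*_{j^*}$; such a job exists by the very definition $\mathcal P^*_R(\ell,B) = \mathcal P^*(J) \cap H_R(\ell)$ and the fact that $\ell^* \in \mathcal P^*_R(\ell,B)$.

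I would first narrow down where $\ell'$ can sit. Since $\ell' \in B' \subseteq \mathcal P^*_R(\ell,B)$ and $\ell^*$ is, by definition, the chronologically first element of $\mathcal P^*_R(\ell,B)$ under the operator $\ell(\cdot)$, the path $\ell'$ cannot strictly precede $\ell^*$. Combining this with the explicit shape of $C(\ell^*)$, the only remaining case is that $\ell'$ lies ``too far to the right'' of $\ell^*$: for instance, if $\ell^*=\ell^u_i$ then either $\ell' = \ell^u_{i'}$ with $i'>i_2$, or $\ell'=\ell^d_{i'}$ with $i'>i_1$, and symmetrically for $\ell^*=\ell^d_i$. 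In this situation I would verify $j^* \in J_L(J,\ell',B')$ by establishing (a) $\mathcal R_{j^*} \cap H_L(\ell') \neq \emptyset$, which is immediate from $\ell^* \in \mathcal R_{j^*}$ together with the fact that the thresholds $i_1,i_2$ in the definition of $C(\ell^*)$ are chosen precisely so that any path strictly past $C(\ell^*)$ has $\ell^*$ in its left halfspace; and (b) $\mathcal R_{j^*} \cap B'=\emptyset$. For (b), since every path of $B'$ intersects $\ell'$, it suffices to show that no path of $\mathcal R_{j^*}$ meets $\ell'$; here I would crucially exploit that $\ell^*$ is the last up-path (respectively, down-path) of $\mathcal R_{j^*}$ by the very definition of $\mathcal R^*_{j^*}$: same-direction paths of $\mathcal R_{j^*}$ are parallel to $\ell^*$ with strictly smaller index and hence cannot meet a same-direction $\ell'$, while opposite-direction paths of $\mathcal R_{j^*}$, by the intersection criterion $\lvert k-k'\rvert \leq \delta/\Delta$ summarized in Figure~\ref{fig:intersection_up_down} and the fact that $\ell'$ sits strictly past the last index in $C(\ell^*)$, also fail to reach $\ell'$.

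The \emph{in particular} claim is then immediate: under Assumption~\ref{ass:bounded_intersections}, $\lvert C(\ell^*)\rvert \leq 3(\lfloor\delta/\Delta\rfloor+1)=O(1)$, and for each fixed $\ell'$ the set $B'$ is a subset of the $O(1)$ paths intersecting $\ell'$, giving $\lvert F(\ell,B)\rvert \leq \lvert C(\ell^*)\rvert\cdot 2^{O(1)}=O(1)$. I expect the main obstacle to be the case analysis in step (b): all four combinations of directions of $\ell^*$ and $\ell'$ have to be handled, and in each one must combine the ``last path'' property of $\mathcal R^*_{j^*}$ with the explicit thresholds $i_1,i_2$ to rule out any crossing of $\ell'$ by an opposite-direction path of $\mathcal R_{j^*}$.
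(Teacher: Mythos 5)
Your proposal is correct and takes essentially the same route as the paper's proof: you choose the same witness job $j^*$ (one for which $\ell^*$ is the last same-direction path of $\mathcal R_{j^*}$), localize $\ell'$ past the same thresholds $i_1,i_2$ using $\ell'\in B'\subseteq\mathcal P^*_R(\ell,B)$ together with $\ell'\notin C(\ell^*)$ (a restriction the paper's proof also uses implicitly), and count $F(\ell,B)$ identically. The only cosmetic difference is that you derive the contradiction by exhibiting $j^*\in J_L(J,\ell',B')$ via $\ell^*\in H_L(\ell')$, whereas the paper shows $\mathcal R_{j^*}\cap(B'\cup H_R(\ell'))=\emptyset$ so that $j^*$ lies in neither $J_M(J,B')$ nor $J_R(J,\ell',B')$ — equivalent by the definition of a leading block; even the geometric step you flag as the main obstacle (bounding the indices of the opposite-direction paths of $\mathcal R_{j^*}$ via the last-path property and the intersection criterion) is precisely the step the paper leaves implicit.
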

\begin{proof}
  We treat the case $\ell^*=\ell^u_i$, and note that the case that $\ell^*$ is a down-path can be
  treated similarly. Let $j\in J$ be a job for which $\ell^*=\ell^u_i$ is the last up-path in
  $\mathcal R_j$. Suppose the statement of the lemma is false. Since $\ell'\in\mathcal P^*_R(\ell,B)$ and
  $\ell^u_i$ is the first path of $\mathcal P^*_R(\ell,B)$, this implies that $\ell'=\ell^d_k$ with
  $k\geq i_1+1$ or $\ell'=\ell^u_k$ with $k\geq i_2+1$. Then
  \begin{align*}
    B' &\subseteq \left\{\ell^d_{i'}\,:\,i'\geq i_1+1\right\}\cup\left\{\ell^u_{i'}\,:\,i'\geq
         i+1\right\},&
    H_R(\ell) &\subseteq \left\{\ell^d_{i'}\,:\,i'\geq i_1+2\right\}\cup\left\{\ell^u_{i'}\,:\,i'\geq
         i_2+1\right\}.
  \end{align*}
  Since $\ell^u_i$ is the last up-path in $\mathcal R_j$, it follows that
  $\mathcal R_j\cap(B'\cup H_R(\ell))=\emptyset$, so
  $j\in J\setminus(J_M(J,B')\cup J_R(J,\ell',B'))$, which contradicts the assumption that
  $(\ell',B')$ is a leading block for $J$. The boundedness of $\lvert F(\ell,B)\rvert$ now follows from the observation that for every
  $\ell'\in C(\ell^*)$ there are at most $2^{1+2\delta/\Delta}=O(1)$ possible $B'$ to form an
  element $(\ell',B')\in F(\ell,B)$.
\end{proof}

As a corollary of Lemma~\ref{lem:leading_blocks}, we obtain that $G$ is sparse. 
\begin{corollary}\label{cor:bounded_out-degree}
  $\lvert V\rvert=O(n)$, and every node has out-degree $O(1)$. In particular, $\lvert A\rvert=O(n)$.
\end{corollary}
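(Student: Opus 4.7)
The plan is to bound $\lvert V_0\rvert$ using the two structural assumptions, then invoke Lemma~\ref{lem:leading_blocks} for the out-degree, and obtain $\lvert A\rvert=O(n)$ as an immediate consequence.

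First I would bound $\lvert V_0\rvert$. Every block $(\ell,B)\in V_0$ satisfies $\ell\in B\subseteq\mathcal P^*$, so the distinguished path $\ell$ belongs to $\mathcal P^*$. Since $\mathcal P^*=\bigcup_{j\in\mathcal J}\mathcal R_j^*$ and $\lvert\mathcal R_j^*\rvert\leq 2$ for every $j$, we have $\lvert\mathcal P^*\rvert\leq 2n$. For a fixed $\ell$, the definition of a block requires that every $\ell'\in B$ intersects $\ell$, so $B$ is a subset of the set $I(\ell)$ consisting of $\ell$ together with all paths that intersect $\ell$. By the discussion preceding Figure~\ref{fig:intersection_up_down}, $\lvert I(\ell)\rvert\leq\lfloor 2\delta/\Delta\rfloor+2=O(1)$ under Assumption~\ref{ass:bounded_intersections}, so the number of blocks with distinguished path $\ell$ is at most $2^{\lvert I(\ell)\rvert-1}=O(1)$. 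Multiplying by $\lvert\mathcal P^*\rvert=O(n)$ gives $\lvert V_0\rvert=O(n)$, and hence $\lvert V\rvert=\lvert V_0\rvert+2=O(n)$.

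Next I would bound the out-degree. A node $(\ell,B)\in V_0$ has outgoing arcs in $A_M$ corresponding to the elements of $F(\ell,B)$, plus possibly one arc in $A_D$ (the latter only when $J_R(\mathcal J,\ell,B)=\emptyset$). By Lemma~\ref{lem:leading_blocks}, $\lvert F(\ell,B)\rvert=O(1)$, so the out-degree of $(\ell,B)$ is $O(1)$. The node $O$ has out-neighbours exactly $F(0,\emptyset)$, which is again $O(1)$ by Lemma~\ref{lem:leading_blocks} applied to $(\ell,B)=(0,\emptyset)$ (the lemma explicitly allows the null block). The sink $D$ has out-degree zero.

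Combining the two bounds, $\lvert A\rvert\leq\lvert V\rvert\cdot\max_{v\in V}\deg^+(v)=O(n)\cdot O(1)=O(n)$, which completes the proof. No step here is really an obstacle, since all the work has been done in Lemma~\ref{lem:leading_blocks}; the only point requiring a small observation is the bound $\lvert V_0\rvert=O(n)$, which relies on using Assumption~\ref{ass:bounded_intersections} to control the number of admissible $B$ for each distinguished path $\ell\in\mathcal P^*$.
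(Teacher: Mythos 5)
Your proposal is correct and takes essentially the same route as the paper: the paper likewise observes that $\ell\in\mathcal P^*$ and $B\subseteq\{\ell'\in\mathcal P^*\,:\,\ell'\cap\ell\neq\emptyset\}$ to get $\lvert V\rvert\leq 2+\lvert\mathcal P^*\rvert\,2^{1+2\delta/\Delta}\leq 2+2n\cdot 2^{1+2\delta/\Delta}=O(n)$, and then deduces the $O(1)$ out-degree directly from Lemma~\ref{lem:leading_blocks}. Your explicit accounting of the arcs at the nodes $O$ and $D$ is marginally more detailed than the paper's one-line conclusion, but the argument is the same.
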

\begin{proof}
  For every $(\ell,B)\in V_0$, $\ell\in\mathcal P^*$, and $B\subseteq\{\ell'\in\mathcal
  P^*\,:\,\ell'\cap\ell\neq\emptyset\}$, which implies
  \[\lvert V\rvert=2+\lvert V_0\rvert\leq 2+\lvert \mathcal
    P^*\rvert2^{1+2\delta/\Delta}\leq2+2n2^{1+2\delta/\Delta}=O(n).\]  
  It follows from Lemma~\ref{lem:leading_blocks} that the out-degree of every node is $O(1)$. 
\end{proof}
\begin{theorem}\label{thm:bunded_intersections}
  The maintenance scheduling problem with bounded number of intersections and singleton possessions can be
  solved in time $ O(n^2) $.
\end{theorem}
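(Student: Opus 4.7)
The plan is to combine Corollary~\ref{cor:shortest_path_formulation} (which reformulates the problem as a shortest path computation in the digraph $G=(V,A)$) with a careful accounting of the cost of building $G$ and running the shortest path algorithm on it. Since $G$ is acyclic (the first path of a solution moves strictly to the right as we follow arcs, so by Lemma~\ref{lem:H_R_subset_H_R} no directed cycle is possible), once $G$ is built the shortest path from $O$ to $D$ can be found by relaxing arcs in topological order, which takes $O(|V|+|A|)$ time.

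First I would bound the size of $G$. By Corollary~\ref{cor:bounded_out-degree}, $|V|=O(n)$ and $|A|=O(n)$, where the hidden constants depend on the ratio $\delta/\Delta$ which is bounded by Assumption~\ref{ass:bounded_intersections}. Hence the shortest path step costs only $O(n)$ once the graph is available, and the entire runtime is determined by the construction cost.

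Next, I would argue that $G$ can be constructed in $O(n^2)$ time. The set $\mathcal{P}^*$ has size $O(n)$ since each $\mathcal{R}_j^*$ contributes at most two paths. The node set $V_0$ is enumerated by scanning every $\ell\in\mathcal{P}^*$ and every $B\subseteq\{\ell'\in\mathcal{P}^*\,:\,\ell'\cap\ell\neq\emptyset\}$; the second factor has size $2^{O(\delta/\Delta)}=O(1)$, so $|V_0|=O(n)$ nodes are produced. For each node $(\ell,B)$ I would then compute its out-neighbours as follows. A single $O(n)$ pass over $\mathcal{J}$ determines $J_R(\mathcal{J},\ell,B)$ and, from it, $\mathcal{P}^*_R(\ell,B)$ together with its first path $\ell^*$. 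By Lemma~\ref{lem:leading_blocks}, the candidate first paths $\ell'$ for elements of $F(\ell,B)$ lie in $C(\ell^*)$ (a set of size $O(1)$), and for each such $\ell'$ there are $O(1)$ candidate blocks $B'\subseteq\mathcal{P}^*_R(\ell,B)$ to test. Checking whether a given pair $(\ell',B')$ is leading for $J_R(\mathcal{J},\ell,B)$ reduces to verifying $J_L(J_R(\mathcal{J},\ell,B),\ell',B')=\emptyset$, which is another $O(n)$ scan over the jobs. Thus each node is processed in $O(n)$ time and $O(1)$ outgoing arcs (with cost $|B'|=O(1)$) are generated, for a total construction cost of $O(n\cdot n)=O(n^2)$.

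Combining the two bounds, the overall runtime is $O(n^2)+O(n)=O(n^2)$, completing the proof. The main technical point to verify is that every per-node operation (computing $J_R$, $\mathcal{P}^*_R$, and testing the leading-block condition for each of the $O(1)$ candidates) can indeed be carried out in linear time; this is routine but must be done carefully, because a naive enumeration that ignores Lemma~\ref{lem:leading_blocks} would enumerate exponentially many candidate blocks and blow up the runtime. The use of Assumption~\ref{ass:bounded_intersections} to absorb constants of the form $2^{O(\delta/\Delta)}$ is essential throughout.
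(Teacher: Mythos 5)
Your proposal is correct and follows essentially the same route as the paper: the paper's proof likewise invokes Corollaries~\ref{cor:shortest_path_formulation} and~\ref{cor:bounded_out-degree} for the $O(n)$ shortest-path step, and builds $G$ by a queue-based procedure (Algorithm~\ref{alg:sp_graph}) that processes each of the $O(n)$ nodes in $O(n)$ time by computing $J_R(\mathcal J,\ell,B)$, $\mathcal P^*_R(\ell,B)$ and $\ell^*$, then enumerating the $O(1)$ candidates $(\ell',B')$ with $\ell'\in C(\ell^*)$ guaranteed by Lemma~\ref{lem:leading_blocks}. One small correction: testing whether $(\ell',B')$ is a leading block is not just verifying $J_L(J,\ell',B')=\emptyset$ but also the splitting condition of Definition~\ref{def:splitting_block} (a job $j$ with $\mathcal R_j$ consisting of a single path crossing $\ell'$ but outside $B'$ lies in none of $J_M$, $J_L$, $J_R$), though the correct test, namely $J_M(J,B')\cup J_R(J,\ell',B')=J$, is equally an $O(n)$ scan, so your runtime analysis stands.
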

\begin{proof}
  In Corollaries~\ref{cor:shortest_path_formulation} and~\ref{cor:bounded_out-degree}, we have
  reduced the problem to finding a shortest path in a directed acyclic digraph with $O(n)$ arcs,
  which can be done in time $O(n)$ (see, e.g.,~\cite[Theorem 4.3]{ahuja1993network}). In order to
  conclude the proof, we need to show that the digraph can be constructed in time $O(n^2)$. We
  initialise a queue containing only node $O$, and while the queue is not empty, we pick a node from
  the queue, construct its out-neighbours (as in Example~\ref{ex:first_steps} for node $O$), and
  remove the node from the queue. This is described more precisely in Algorithm~\ref{alg:sp_graph}.
  \begin{algorithm}
    \caption{Construction of the digraph $G=(V,A)$}\label{alg:sp_graph}
    \begin{tabbing}
      ....\=....\=....\=....\=............................ \kill \\[-2ex]
      \textbf{Input:} $\mathcal R^*_j$ for every $j\in\mathcal J$\\[1ex]
      Initialize $V\leftarrow\{O,D\}$, $A\leftarrow\emptyset$, $Q\leftarrow\{(0,\emptyset)\}$\\
      \textbf{while} $Q\neq\emptyset$ \textbf{do}\\
      \> pick $(\ell,B)\in Q$\\
      \> $J\leftarrow J_R(\mathcal J,\ell,B)$\\      
      \> $\mathcal P^*\leftarrow\mathcal P^*_R(\ell,B)$\\
      \> $\ell^*\leftarrow\ell(\mathcal P^*(\ell,B))$\\
      \> \textbf{for} $\ell'\in C(\ell^*)$ and $B'\subseteq\{\ell''\in\mathcal P^*\,:\,\ell''\cap\ell'\neq\emptyset\}$
      \textbf{do}\\
      \> \> \textbf{if} $(\ell',B')$ is a leading block for $J$ \textbf{then}\\
      \> \> \> $V\leftarrow V\cup\{(\ell',B')\}$\\
      \> \> \> $A\leftarrow A\cup\{((\ell,B),\,(\ell',B'))\}$\\
      \> \> \> \textbf{if} $J_R(\mathcal J,\ell',B')\neq\emptyset$ \textbf{then} $Q\leftarrow Q\cup\{(\ell',B')\}$\\
      \> \> \> \textbf{else} $A\leftarrow A\cup\{((\ell',B'),\,D)\}$\\
      \> $Q\leftarrow Q\setminus\{(\ell,B)\}$\\[1ex]
      \textbf{Output:} digraph $G=(V,A)$
    \end{tabbing}
  \end{algorithm}
  The while loop is processed once for each of the $O(n)$ nodes $(\ell,B)\in V$, and the result
  follows since each iteration of the loop takes time $O(n)$.
\end{proof}
\begin{example}
  For the instance in Figure~\ref{fig:assumptions} the construction of the the out-neighbours of node
  $\left(\ell_3^d,\left\{\ell_3^d,\ell_4^u\right\}\right)$ is illustrated in Figure~\ref{fig:out-neighbours}.
  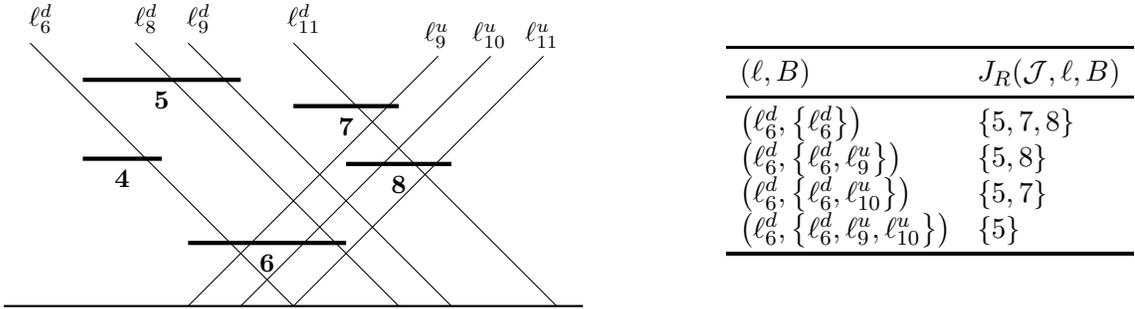
\begin{figure}[htb]
    \begin{minipage}{.6\textwidth}
      \centering
  \begin{tikzpicture}[scale=.7]
    \draw[thick,-] (3.5,0) -- (14.5,0);    
    \foreach \x in {6,8,9,11} {
      \draw (\x-2,5) -- (\x+3,0);
      \draw (\x-1.8,5) node[anchor=south] {{\small $\ell_{\x}^d$}};      
    }
   \foreach \x in {9,10,11} {
      \draw (\x-2,0) -- (\x+2.75,4.75);      
      \draw (\x+2.72,4.72) node[anchor=south] {{\small $\ell_{\x}^u$}};
    }
    \draw[ultra thick] (5,2.8) to node[below] {{\small$\mathbf{4}$}} (6.5,2.8);
    \draw[ultra thick] (5,4.3) to node[below] {{\small$\mathbf{5}$}} (8,4.3);
    \draw[ultra thick] (7,1.2) to node[below] {{\small$\mathbf{6}$}} (10,1.2);
    \draw[ultra thick] (10,2.7) to node[below] {{\small$\mathbf{8}$}} (12,2.7);
    \draw[ultra thick] (9,3.8) to node[below] {{\small$\mathbf{7}$}} (11,3.8);    
  \end{tikzpicture}
\end{minipage}\hfill
\begin{minipage}{.39\textwidth}
\centering
  \begin{tabular}{ll} \toprule
      $\left(\ell,B\right)$ & $J_R(\mathcal J,\ell,B)$ \\ \midrule
      $\left(\ell_6^d,\left\{\ell_6^d\right\}\right)$ & $\{5,7,8\}$\\
      $\left(\ell_6^d,\left\{\ell_6^d,\ell^u_9\right\}\right)$ & $\{5,8\}$\\
      $\left(\ell_6^d,\left\{\ell_6^d,\ell^u_{10}\right\}\right)$ & $\{5,7\}$\\
      $\left(\ell_6^d,\left\{\ell_6^d,\ell_9^u,\ell^u_{10}\right\}\right)$ & $\{5\}$\\ \bottomrule
    \end{tabular}
  \end{minipage}
\caption{The remaining problem for node $\left(\ell_3^d,\left\{\ell_3^d,\ell_4^u\right\}\right)$ and
the resulting out-neighbours.}\label{fig:out-neighbours}  
\end{figure}
\end{example}
\begin{remark}
  The arguments presented in this section show that the maintenance scheduling problem with
  singleton possessions remains polynomially solvable as long as the number of intersections is
  bounded by the logarithm of $n$, that is, Assumption~\ref{ass:bounded_intersections}, which is
  equivalent to $\delta/\Delta=O(1)$, can be relaxed to $\delta/\Delta=O(\log n)$. 
\end{remark}

\section{Computational experiments}\label{sec:computation}  
In this section, we compare MIP formulations on instances that include only up-paths. The
computational experiments were carried out on a Dell Latitude E5570 laptop with Intel Core i7-6820HQ
2.70GHz processor, and 8GB of RAM running Ubuntu 16.04. We used Python 3.5 and Gurobi 7.0.2 to solve
the MIP models. We set a time limit of 100 seconds for solving each instance. If an optimal solution
was not reached within this time frame, the best incumbent was reported.

We compare the strength of each formulation on 12 classes of randomly generated instances. Each
class contains 20 instances. An instance is given by a set of jobs $\mc{J}$, a set of paths
$\mc{P}$, and a set of collections~$\mc{R}_j$, $j \in \mc{J}$. Each class is represented by a pair
$(n,L)$ where $n$ is the number of jobs and $L$ is the maximum number of distinct lengths. The main
steps of the procedure for generating each instance of class $(n,L)$ are as follows: First, $L$
distinct lengths are chosen randomly from $\{1,2,3,5,7,11\}$. Second, the number of paths (i.e.,
$m$) is set equal to $n$ times the average of the chosen distinct lengths divided by 2. Third, the
length of each job is chosen randomly from the $L$ chosen distinct lengths. Finally, the first and
last path for each job are randomly chosen from $\{1,\dotsc, m\}$.

We compare the models based on the following measures: the number of instances that are solved to
optimality by the LP relaxation (N. LP Optimal), the average solution runtime (Avg. IP
runtime), the maximum solution runtime (Max IP runtime), the average of
\[\frac{|\text{Optimal LP relaxation obj. value - Optimal obj. value}|}{\text{Optimal obj. value}}\]
(Avg. LP Gap), the maximum LP Gap (Max LP Gap), the average of
\[\frac{|\text{Incumbent obj. value - Optimal obj. value}|}{\text{Optimal obj. value}}\]
(Avg. MIP Gap) where incumbent refers to a best integer solution found within 100 seconds, and
maximum MIP Gap (Max MIP Gap).

The results are presented in Tables~\ref{table:SCMModelStatistics} and~\ref{table:HSMModelStatistics}.
\begin{table}[htbp]
  \renewcommand{\arraystretch}{1.1}
  \centering
  \caption{Computational results for SCM formulation. All times are wall clock times and are given in seconds.}
  \begin{tabular}{@{}p{1.5cm}p{1.5cm}p{1.5cm}p{1.5cm}p{1.5cm}p{1.5cm}@{}}
    \toprule
    $(n,Nl)$ & N. LP Optimal & Avg. IP Runtime & Max IP Runtime & Avg. LP Gap (\%) & Max LP Gap (\%) \\
    \midrule
    (30,2) & 17    & 0.01  & 0.02  & 0.30 & 2.38 \\
    (30,3) & 18    & 0.01  & 0.02  & 0.34 & 5.26 \\
    (30,4) & 19    & 0.01  & 0.02  & 0.08 & 1.52 \\
    (50,2) & 18    & 0.06  & 0.13  & 0.07 & 0.76 \\
    (50,3) & 19    & 0.06  & 0.09  & 0.10 & 1.96 \\
    (50,4) & 16    & 0.07  & 0.11  & 0.26 & 1.67 \\
    (70,2) & 19    & 0.30  & 0.61  & 0.07 & 1.47 \\
    (70,3) & 17    & 0.34  & 1.11  & 0.12 & 1.19 \\
    (70,4) & 15    & 0.20  & 0.47  & 0.28 & 1.97 \\
    (100,2) & 18    & 1.90  & 4.30  & 0.13 & 1.68 \\
    (100,3) & 16    & 1.69  & 5.44  & 0.16 & 1.21 \\
    (100,4) & 17    & 1.24  & 2.86  & 0.10 & 0.79 \\
    \bottomrule
    \end{tabular}%
      \label{table:SCMModelStatistics}
    \end{table}%
    \begin{table}[htbp]
      \renewcommand{\arraystretch}{1.1}  \centering
      \caption{Computational results for PIM formulation. All times are wall clock times and are
        given in seconds.}
    \begin{tabular}{@{}p{1.5cm}p{1.5cm}p{1.5cm}lp{1.5cm}p{1.5cm}p{1.5cm}p{1.5cm}@{}}
    \toprule
    $(n,Nl)$ & Avg. IP Runtime & Max IP Runtime & Avg. LP Gap (\%) & Max LP Gap (\%) & Avg. MIP Gap (\%) & Max MIP Gap (\%) \\
    \midrule
   (30,2) & 46.55 & 100.21 & 51.18 & 80.23 & 0 & 0 \\
   (30,3) & 73.96 & 100.33 & 58.77 & 70.78 & 0.26 & 3.03 \\
   (30,4) & 76.44 & 100.19 & 58.06 & 76.57 & 0.11 & 2.17 \\
   (50,2) & 71.90 & 100.10 & 53.06 & 78.85 & 0.77 & 5.41 \\
   (50,3) & 81.63 & 100.59 & 55.33 & 75.23 & 0.58 & 5.00 \\
   (50,4) & 91.40 & 100.31 & 57.29 & 74.74 & 1.26 & 6.38 \\
   (70,2) & 90.16 & 100.25 & 59.41 & 77.52 & 3.95 & 22.12 \\
   (70,3) & 96.40 & 100.10 & 60.28 & 74.72 & 3.55 & 9.72 \\
   (70,4) & 100.01 & 100.09 & 56.17 & 73.81 & 2.99 & 9.84 \\
   (100,2) & 91.12 & 101.73 & 58.48 & 76.56 & 11.70 & 50.00 \\
   (100,3) & 100.01 & 100.09 & 60.73 & 77.91 & 10.66 & 32.47 \\
   (100,4) & 100.03 & 100.23 & 61.37 & 73.42 & 5.52 & 15.79 \\
    \bottomrule
    \end{tabular}%
  \label{table:HSMModelStatistics}
\end{table}%
The LP relaxation of the PIM formulation did not yield the optimal value for any instance whereas
the LP relaxation of SCM formulation gave the optimal value for the majority of instances. The
maximum LP Gap for the SCM formulation on all instances is $5.26\%$ whereas that of the PIM
formulation is $80.23\%$. The maximum average LP Gap for the SCM formulation on all classes is
$0.34\%$ compared to $61.37\%$ for the PIM formulation. The MIP gap on all instances for SCM
formulation is zero whereas the maximum average and maximum MIP gap for PIM formulation on all
classes are $11.70\%$ and $50.00\%$. It is clear from the computational results that SCM formulation
outperforms PIM formulation considerably. We did some additional experiments for larger instances,
and found that the SCM formulation typically finds the optimal solution within the time limit of
$100$ seconds for up to 380 jobs.

\section{Conclusions and open problems}\label{sec:conclusion}

We have introduced the problem of scheduling maintenance jobs in a railway
corridor with bidirectional traffic so that the number of train paths
that have to be cancelled due to the maintenance are minimized. We
proved that the general problem is NP-complete, presented two integer
programming formulations, and showed that the problem can be solved in
polynomial time using dynamic programming in two special cases:
\begin{enumerate}
\item The train paths are only in a single direction.
\item The number of paths crossing any given path is bounded, and the
  maintenance jobs are short in the sense that each job can be
  scheduled to cancel only one path.
\end{enumerate}

The maintenance scheduling problem studied in this paper is a tactical
planning problem. While we have made some simplifying assumptions
which align with standard industry practice, it is natural to
investigate what happens when we remove some of these simplifying
assumptions. For example, in reality, train paths do not correspond to
equidistant parallel lines in the plane. Rather, train paths in
operational plans are represented by piecewise linear monotone
functions (increasing for up-paths and decreasing for down-paths). The
integer programming models in Section~\ref{sec:MIP} can easily capture
this setting as they do not depend on the geometry of the train
paths. On the other hand, the polynomial time algorithm for the
unidirectional case presented in Section~\ref{sec:unidirectional}
crucially depends on the assumption that the train paths are
equidistant. This motivates the following problem.
\begin{problem}
  Can the unidirectional maintenance scheduling problem be solved in
  polynomial time if paths correspond to collections of parallel
  straight lines, but we remove the assumption that they are
  equidistant?
\end{problem}
      
Another interesting direction is a polyhedral study of the [SCM]
formulation. In Section~\ref{subsec:IP} we observed that in the
unidirectional case the LP relaxation is integral if all of the jobs
have the same length.
\begin{problem}
  Find strong valid inequalities for the sets of feasible solutions to
  the problems [uniSCM] and [SCM]. More ambitiously, characterize the
  convex hulls of these sets, possibly under additional assumptions.
\end{problem}

In the practical maintenance scheduling setting it might be desirable
that there is a balance between the number of cancelled up-paths and
cancelled down-paths. This could also be addressed in future work.

\printbibliography

\end{document}